\newtheorem{Teo}{Theorem}[section]
\newtheorem{Prop}[Teo]{Proposition}
\newtheorem{Lema}[Teo]{Lemma}
\newtheorem{Cor}[Teo]{Corollary}
\theoremstyle{definition}
\newtheorem{Def}[Teo]{Definition}
\newtheorem{Not}[Teo]{Notation}
\newtheorem{Obs}[Teo]{Remark}
\newcommand{\N}{\mathbb{N}}
\newcommand{\lra}{\longrightarrow}
\begin{document}
\title{Limit key polynomials as $p$-polynomials}
\author{Michael de Moraes}
\author{Josnei Novacoski}
\thanks{During the realization of this project the second author was supported by a grant from Funda\c c\~ao de Amparo \`a Pesquisa do Estado de S\~ao Paulo (process number 2017/17835-9).}

\begin{abstract}
The main goal of this paper is to characterize limit key polynomials for a valuation $\nu$ on $K[x]$. We consider the set $\Psi_\alpha$ of key polynomials for $\nu$ of degree $\alpha$. We set $p$ be the exponent characteristic of $\nu$. Our first main result (Theorem \ref{maintheorem}) is that if $F$ is a limit key polynomial for $\Psi_\alpha$, then the degree of $F$ is $p^r\alpha$ for some $r\in\N$. Moreover, in Theorem \ref{them2}, we show that there exist $Q\in\Psi_\alpha$ and $F$ a limit key polynomial for $\Psi_\alpha$, such that the $Q$-expansion of $F$ only has terms which are powers of $p$.
\end{abstract}
\keywords{Valuations, Key polynomials, Limit key polynomials, $p$-polynomials}
\subjclass[2010]{Primary 13A18}
\maketitle

\section{Introduction}
The concept of key polynomials was introduced in \cite{Mac_1} and \cite{Mac_2} in order to understand extensions of a valuation $\nu_0$ on a field $K$ to the field $K(x)$. The idea is that for a given valuation $\nu$ on $K(x)$, a key polynomial $Q\in K[x]$ for $\nu$ allows us to \textit{augment} $\nu$, i.e., build a new valuation $\nu'$ with $\nu(f)\leq \nu'(f)$ for every $f\in K[x]$ (which we denote by $\nu\leq\nu'$) and $\nu(Q)<\nu'(Q)$. MacLane proved that if $\nu_0$ is discrete and of rank one, then every valuation $\nu$ on $K(x)$, extending $\nu_0$, can be built by starting with a \textit{monomial valuation} and using a sequence (of order type at most $\omega$) built iteratively to obtain $\nu$.

A major development was presented by Vaqui\'e in \cite{Vaq_1}, \cite{Vaq_3} and \cite{Vaq_2}. He introduced the concept of limit key polynomial, and proved that if we allow these objects in the sequence, then we can drop the assumption of $\nu_0$ being discrete in MacLane's main result (in this case, the order type of the sequence can be larger than $\omega$).

An alternative definition of key polynomials was introduced in \cite{Spivmahandjul} and \cite{SopivNova} (in \cite{Spivmahandjul} they are called abstract key polynomials). The main difference between these two objects is that a key polynomial for $\nu$ (as in Maclane and Vaqui\'e's work) allows us to \textit{augment} $\nu$, while a key polynomial for $\nu$ (as in \cite{Spivmahandjul} and \cite{SopivNova}) allows us to \textit{truncate} $\nu$. In particular, if we consider the valuation $\nu'$ on $K[x]$ obtained from $\nu$ by the MacLane-Vaqui\'e's method, then $\nu\leq\nu'$ and by the method in \cite{Spivmahandjul} and \cite{SopivNova}, we obtain $\nu'\leq\nu$. Because of this, key polynomials (as in \cite{Spivmahandjul} and \cite{SopivNova}) are better to stablish the relation to other similar objects in the literature, such as \textit{pseudo-convergent sequences} as defined in \cite{Kap} and \textit{minimal pairs} as defined in \cite{Kand2}. The relations between these objects were explored in \cite{Nov} and \cite{SopivNova}.

The main purpose of the work presented here is to understand the structure of limit key polynomials. These objects are main obstacles in some problems concerning valuations. One example of this is the local uniformization problem, which is open in positive characteristic. One of the main problems in handling valuations is the existence of defect. For instance, in \cite{MoutCut} it is proved that if an extension does not have defect, then one can \textit{extend} local uniformization. Hence, one is led to ask the relation between defect and the degrees of limit key polynomials. This relation was stablished in the case of \textit{unique prolongation of the valuation} (see Corollary 6.1 of \cite{Nart} or Corollaire 2.10 of \cite{Vaq_2}). We hope to prove similar results for the case where the prolongation is not unique.

For a valuation $\nu$ on a field $K$ we denote the residue field of $\nu$ by $K\nu$. Then the \textbf{characteristic exponent of $(K,\nu)$} is defined as ${\rm char}(K\nu)$ if ${\rm char}(K\nu)>0$ and $1$ if ${\rm char}(K\nu)=0$. It is well-known that the defect of an extension is a power of $p$, the characteristic exponent of $\nu$. If one wants to relate the defect to the degrees of limit key polynomials it is natural to ask whether the degree of a limit key polynomial for $\Psi_\alpha$ is of the form $p^r\alpha$ for some $r\in \N$. This is our first main result. 

Throughout this paper we will consider a rank one valuation $\nu$ on $K[x]$. For $f\in K[x]\setminus\{0\}$ we define the \textbf{level} $\epsilon(f)$ of $f$ by
\begin{equation}\label{KP}
\epsilon(f)=\max_{b\in\mathbb{N}}\left\{{\frac{\nu(f)-\nu(\partial_bf)}{b}}\right\},
\end{equation}
where $\partial_bf$ is the Hasse derivative of $f$ of order $b$. Take $Q\in K[x]\setminus K$ a monic polynomial. We say that $Q$ is a \textbf{key polynomial} for $\nu$ if for every $f\in K[x]\setminus\{0\}$ of degree smaller than $\deg(Q)$ we have $\epsilon(f)<\epsilon(Q)$.

For any $f,q\in K[x]$, with $\deg(q)\geq 1$, since $K[x]$ is a Euclidean domain, there exist uniquely determined $f_0,\ldots,f_n\in K[x]$ with $f_i=0$ or $\deg(f_i)<\deg(q)$ for every $i$, $0\leq i\leq n$, such that
\[
f=f_nq^n+\ldots+f_1q+f_0.
\]
This expression is called the \textbf{$q$-expansion of $f$}. In this case, we set $\deg_q(f):=n$. For each $q\in K[x]$ the map
\[
\nu_q(f_nq^n+\ldots+f_1q+f_0):=\min_{0\leq i\leq n}\left\{\nu(f_iq^i)\right\}
\]
is well-defined. This map is called the \textbf{truncation of $\nu$ on $q$}. For polynomials $f,q\in K[x]\setminus\{0\}$ we define
\[
S_q(f):=\{i \mid\nu_q(f)=\nu(f_iQ^i)\}\mbox{ and }\delta_q(f):=\max S_q(f).
\]

For $\alpha\in\N$ we consider $\Psi_\alpha$ the set of key polynomials for $\nu$ of degree $\alpha$. Assume that for every $Q\in \Psi_\alpha$ there exists $Q'\in \Psi_\alpha$ such that $\epsilon(Q)< \epsilon(Q')$. Consider the set
\[
S_\alpha:=\{f\in K[x]\mid \nu_Q(f)<\nu(f)\mbox{ for every }Q\in\Psi_\alpha\}.
\]
A monic polynomial $F\in K[x]$ is a \textbf{limit key polynomial for $\Psi_\alpha$} if it belongs to $S_\alpha$ and has the least degree among polynomials in $S_\alpha$. One of the main goals of this paper is to prove the following.

\begin{Teo}\label{maintheorem}
Assume that $\Psi_\alpha$ admits a limit key polynomial. Then there exists $r\in\N$ such that for every $Q\in\Psi_\alpha$ and every limit key polynomial $F$ for $\Psi_\alpha$ the $Q$-expansion of $F$ is of the form
\[
F=Q^{p^r}+a_{p^r-1}Q^{p^{r}-1}+\ldots+a_1Q+a_0.
\]
Moreover, for every $Q\in\Psi_\alpha$, we have $\nu_Q(F)=p^r\nu(Q)$.
\end{Teo}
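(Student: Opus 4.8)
\emph{Plan of proof.} Fix a limit key polynomial $F$ for $\Psi_\alpha$, put $d=\deg(F)$, fix $Q\in\Psi_\alpha$, and write the $Q$-expansion $F=f_nQ^n+\cdots+f_1Q+f_0$ with $n=\deg_Q(F)$. Since every element of $\Psi_\alpha$ has degree $\alpha$, we have $d=n\alpha+\deg(f_n)$ with $0\le\deg(f_n)<\alpha$; hence $n$ and $\deg(f_n)$ are determined by $d$ and $\alpha$ alone, so $\deg_Q(F)$ does not depend on the choice of $Q\in\Psi_\alpha$, and all limit key polynomials share the common degree $d$. It therefore suffices to establish three statements: (a) $n=p^r$ for some $r\in\N$; (b) $\deg(f_n)=0$, so that $f_n\in K$ and, since $F$ is monic, $f_n=1$ — together with (a) this gives $d=p^r\alpha$ and the asserted form of the $Q$-expansion; and (c) $\delta_Q(F)=n$ for every $Q\in\Psi_\alpha$, which in view of (b) yields $\nu_Q(F)=\nu(f_nQ^n)=n\,\nu(Q)=p^r\nu(Q)$.

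The common engine is minimality of $d$ together with three standard facts about $\Psi_\alpha$: it is totally ordered (hence directed) by $\epsilon$ and, by the standing hypothesis, has no $\epsilon$-largest element; $\nu_Q\le\nu_{Q'}$ whenever $\epsilon(Q)\le\epsilon(Q')$; and $\nu=\sup_{Q\in\Psi_\alpha}\nu_Q$, the supremum being attained at a polynomial $g$ exactly when $g\notin S_\alpha$. Since $F$ has least degree in $S_\alpha$, no polynomial of degree $<d$ lies in $S_\alpha$, so for any finite family of such polynomials there is a single $Q'\in\Psi_\alpha$ whose level is large enough that $\nu_{Q'}$ agrees with $\nu$ on all of them. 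Using minimality and the relations $\nu_Q\le\nu_{Q'}$ one then shows $\delta_Q(F)=n$ for every $Q\in\Psi_\alpha$ — so (c) holds — by analysing the truncated polynomials $\sum_{i\in S_{Q'}(F)}f'_iQ'^{\,i}$ attached to the $Q'$-expansion $F=\sum_i f'_iQ'^{\,i}$, which have degree $<d$ as soon as $\delta_{Q'}(F)<n$. From $\delta_{Q'}(F)=n$ one gets
\[
\nu_{Q'}(F)=\nu\bigl(f'_nQ'^{\,n}\bigr)<\nu(F),\qquad\text{hence}\qquad\sum_{i\in S_{Q'}(F)}\mathrm{in}_\nu(f'_i)\,\mathrm{in}_\nu(Q')^{\,i}=0\ \text{ in }\ \mathrm{gr}_\nu(K[x]),
\]
so $\mathrm{in}_\nu(Q')$ is a root of a nonzero polynomial of degree $n$ whose coefficients are initial forms of polynomials of degree $<\alpha$; and, lifting any hypothetical relation of smaller degree back to a polynomial in $S_\alpha$ of degree $<d$, one sees that $n$ is the \emph{least} degree of such a relation for $\mathrm{in}_\nu(Q')$. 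Part (b) follows by dividing $F$ by $f'_n$ with remainder and testing the (suitably normalised) quotient against the valuations $\nu_{Q''}$, $Q''\in\Psi_\alpha$: were $\deg(f'_n)>0$, this quotient would be a monic polynomial of degree $n\alpha<d$ belonging to $S_\alpha$, contradicting minimality.

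The heart of the argument is (a): the least degree $n$ of a relation for $\mathrm{in}_\nu(Q')$ over the graded subring generated by the initial forms of polynomials of degree $<\alpha$ is a power of $p$. Assume not and write $n=p^am$ with $p\nmid m$ and $m\ge2$. The decisive characteristic-$p$ ingredient is Kummer's theorem, which gives $p\mid\binom{n}{k}$ for $0<k<p^a$ and $p\nmid\binom{n}{p^a}$. Feeding this into the Hasse-derivative Taylor expansion $g(x+T)=\sum_{c\ge0}(\partial_cg)\,T^c$, the derivative $\partial_{p^a}\!\bigl(Q'^{\,n}\bigr)$ — or, when $Q'\in K[x^p]$, the derivative $\partial_{p^{a+s}}\!\bigl(Q'^{\,n}\bigr)$, where $p^s$ is the largest power of $p$ with $Q'\in K[x^{p^s}]$ — is, modulo terms of strictly larger $\nu$-value, a nonzero multiple of $m$ times a proper power of $Q'$ times an initial form of degree $<\alpha$; in particular its $Q'$-degree drops below $n$. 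Substituting this into the $Q'$-expansion of the polynomial $\partial_{p^a}F$, which has degree $<d$, and repeating the comparison with $\nu$, one extracts a relation for $\mathrm{in}_\nu(Q')$ of degree strictly less than $n$ — equivalently, a polynomial in $S_\alpha$ of degree $<d$ — contradicting minimality. I expect this to be the main obstacle: carrying out the characteristic-$p$ bookkeeping of Hasse derivatives precisely (in particular the inseparable case $Q'\in K[x^p]$, and the places where binomial coefficients such as $\binom{d}{p^a}$ vanish modulo $p$), and, above all, making rigorous the passage from such a near-vanishing derivative identity to a genuine lower-degree relation and thence to a polynomial in $S_\alpha$ of degree strictly below $d$.

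Granting (a), (b), (c), the theorem is immediate: by (a) and (b) one has $d=p^r\alpha$ and $f_n=1$, so for every $Q\in\Psi_\alpha$ the $Q$-expansion of every limit key polynomial $F$ has the displayed shape; and by (c), $\nu_Q(F)=\nu(Q^{p^r})=p^r\nu(Q)$ for every such $Q$.
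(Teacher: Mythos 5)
Your top-level decomposition is the same as the paper's: show $\delta_Q(F)=\deg_Q(F)$ for all $Q$, show the leading coefficient of every $Q$-expansion is $1$, and show the $Q$-degree is a power of $p$ (these are the paper's Propositions \ref{delta=deg}, \ref{coeficientelider1} and \ref{Lambda}\textbf{(vi)}). But the engine you propose for (a) has a genuine gap. You reduce (a) to the claim that $n=\deg_{Q'}(F)$ is the \emph{least} degree of an algebraic relation satisfied by $\mathrm{in}_\nu(Q')$ over the initial forms of polynomials of degree $<\alpha$, arguing that a relation of smaller degree would lift to an element of $S_\alpha$ of degree $<\deg(F)$. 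This is false: since $\Psi_\alpha$ has no maximum, for any $Q'\in\Psi_\alpha$ there is $Q''=Q'+h\in\Psi_\alpha$ with $\nu(Q'')>\nu(Q')$ and $\deg(h)<\alpha$, so $\mathrm{in}_\nu(Q')=-\mathrm{in}_\nu(h)$ is already a degree-one relation. The lift of such a relation (here $Q''$ itself) satisfies $\nu_{Q'}(Q'')<\nu(Q'')$ but is certainly not in $S_\alpha$, because membership in $S_\alpha$ requires $\nu_Q<\nu$ for \emph{every} $Q\in\Psi_\alpha$, not just the one $Q'$ at which the relation was read off. So minimality of $\deg(F)$ in $S_\alpha$ does not control relation degrees at a single truncation, and your Kummer/Hasse-derivative step has nothing to contradict. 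The statement is intrinsically a limit statement: the paper's proof works uniformly over a final subset of $\Psi_\alpha$, first stabilizing $\gamma_Q(F)$, $\Lambda_Q(F)$ and $\max I(Q)=\{b_\infty\}$, showing the relevant coefficients have value exactly $(\delta-i)B$ with $B=\sup\nu(\Psi_\alpha)$ (Lemmas \ref{5canhao}, \ref{7essencial}), and then deriving the contradiction by proving that $\partial_bF$ (with $b=p^eb_\infty$, not $p^a$ or an order tied to inseparability of $Q'$) lies in $S_\alpha$ — i.e.\ $\delta_Q(\partial_bF)>0$ and hence $\nu_Q(\partial_bF)<\nu(\partial_bF)$ for all $Q$ in a final subset, via Lemma \ref{2sujestao}\textbf{(vii)} — which contradicts minimality of $\deg(F)$ directly, with no graded-relation intermediary.

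Your step (b) is also not justified as written: dividing $F$ by $f'_n$ in $K[x]$ does not produce a polynomial whose membership in $S_\alpha$ you can test, since $S_\alpha$ is not stable under taking quotients. The paper instead uses that $Q_1$ is irreducible to pick a Bezout inverse $a$ of $f_\delta$ modulo $Q_1$ with $\deg(a)<\alpha$ (multiplication by such $a$ does preserve $S_\alpha$), and then removes the two extra monomials of the $Q_1$-expansion of $aF$ using Lemma \ref{4removedordemonomios}; this requires the quantitative bound that those monomials have value $>\delta B$, which in turn forces a careful choice of $Q_1$ far out in $\Psi_\alpha$ (or the maximum, if it exists). Your sketch of (c) points in the right direction (minimality plus comparison of truncations), but the paper's actual mechanism is again monomial removal via Lemma \ref{4removedordemonomios} after choosing $Q_1$ with $\nu(Q_1)$ close enough to $B$, not an analysis of the minimal-value part of the $Q'$-expansion. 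In short: the skeleton matches, but the core characteristic-$p$ argument needs to be rebuilt around uniform estimates over a final subset of $\Psi_\alpha$ rather than around relations in $\mathrm{gr}_\nu(K[x])$ at a single $Q'$.
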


The idea of the proof of Theorem \ref{maintheorem} is the following. We let
\[
F=a_d Q^d+a_{d -1}Q^{d-1}+\ldots+a_0
\]
be the $Q$-expansion of $F$. We set $\delta:=\delta_Q(F)$. Then we show that $d=\delta$ (Proposition \ref{delta=deg}). This means that
\[
F=a_\delta Q^\delta+a_{\delta -1}Q^{\delta-1}+\ldots+a_0.
\]
Then we show that $a_\delta=1$ (Proposition \ref{coeficientelider1}) and that $\delta=p^r$ for some $r\in\N$ (Proposition \ref{Lambda} \textbf{(vi)}).

The relation of key polynomials and \textit{pseudo-convergent sequences} was studied in \cite{SopivNova}. There, it is shown that any polynomial of smallest degree \textit{not fixed} by a pseudo-convergent sequence is a limit key polynomial. Moreover, in \cite{Kap} it is shown that such polynomials can be chosen to be \textit{$p$-polynomials}, i.e., of the form
\[
a_{p^r}x^{p^r}+a_{p^{r-1}}x^{p^{r-1}}\ldots+a_1x+a_0.
\]
Our next result generalizes this for limit key polynomial of any degree.

\begin{Teo}\label{them2}
Assume that $\Psi_\alpha$ admits a limit key polynomial. Then there exist $r\in\N$, $Q\in\Psi_\alpha$ and a limit key polynomial $F$ for $\Psi_\alpha$ such that the $Q$-expansion of $F$ is of the form
\[
F=Q^{p^r}+a_{p^{r-1}}Q^{p^{r-1}}+\ldots+a_1Q+a_0.
\]
\end{Teo}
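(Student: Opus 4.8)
The plan is to start from the output of Theorem \ref{maintheorem}: fix any $Q_0\in\Psi_\alpha$ and a limit key polynomial $F_0$ for $\Psi_\alpha$, so that the $Q_0$-expansion has the shape $F_0=Q_0^{p^r}+a_{p^r-1}Q_0^{p^r-1}+\ldots+a_0$ with $\nu_{Q_0}(F_0)=p^r\nu(Q_0)$. The goal is to kill every coefficient $a_j$ whose index $j$ is not a power of $p$, at the cost of replacing $Q_0$ by another key polynomial $Q\in\Psi_\alpha$ of the same degree and $F_0$ by another limit key polynomial $F$. The natural mechanism is the one already implicit in the passage from MacLane to Vaqui\'e: among the key polynomials in $\Psi_\alpha$ we have freedom to translate, $Q_1=Q_0+b$ with $\deg b<\alpha$ (and more generally to perform the standard degree-preserving modifications), and such a translation acts on the $Q_0$-expansion of $F_0$ by a binomial/Taylor substitution, reshuffling the coefficients $a_j$.

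Concretely, the main step I would carry out is: writing $\delta=p^r=\deg_Q(F)$ and collecting the coefficients $a_j$ with $\nu(a_jQ^j)=\nu_Q(F)=\delta\nu(Q)$ — i.e. $j\in S_Q(F)$ — show that after a suitable choice of $Q$ within $\Psi_\alpha$ one can force $S_Q(F)$ (for a suitable limit key polynomial $F$) to consist only of powers of $p$, and moreover that all coefficients $a_j$ with $j\notin\{1,p,p^2,\ldots,p^r\}$ can be taken to vanish. For this I would exploit the ``$p$-polynomial'' phenomenon the way it appears in \cite{Kap} and \cite{SopivNova}: over the residue ring of the truncation $\nu_Q$, the leading form of $F$ must be an additive (hence $p$-)polynomial in the leading form of $Q$, because $F$ is a limit key polynomial — any mixed term of non-$p$-power degree would already be achievable by a polynomial of smaller degree, contradicting minimality of $\deg F$ in $S_\alpha$. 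Translating this statement about leading forms back to an honest statement about the $Q$-expansion is exactly what forces the intermediate coefficients to be either zero or of strictly larger value, and by one further degree-preserving adjustment of $Q$ (absorbing the $a_0$-type and lower-order discrepancies) one arrives at an $F$ whose $Q$-expansion has support contained in $\{0,1,p,p^2,\ldots,p^r\}$; finally, the terms of non-$p$-power degree that remain are seen to be removable because subtracting them produces a polynomial still in $S_\alpha$ of the same degree, hence still a limit key polynomial.

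The hard part, I expect, is the bookkeeping in the substitution $Q_0\mapsto Q$: one must verify that the replacement polynomial is still in $\Psi_\alpha$ (same degree $\alpha$, still a key polynomial — i.e. still of maximal level among polynomials of degree $<\alpha$ does not quite capture it, one needs $\epsilon$ to be respected), that $F$ stays a limit key polynomial (membership in $S_\alpha$ is the delicate condition, since it quantifies over \emph{all} $Q'\in\Psi_\alpha$, not just the new $Q$), and that the value $\nu_Q(F)=p^r\nu(Q)$ is preserved so that the leading-form argument can be rerun. I would organize this as: (i) a lemma that the set $S_\alpha$ and the notion of limit key polynomial are insensitive to translating $Q$ by polynomials of degree $<\alpha$; (ii) the leading-form computation showing additivity of the leading form of $F$ in that of $Q$, using minimality of $\deg F$; (iii) an explicit choice of translation that clears the non-$p$-power coefficients, checking at each stage that we remain inside $\Psi_\alpha$; and (iv) a final truncation/subtraction step to reach the stated normal form. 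Throughout, the characteristic-$p$ input is the classical fact that $\binom{p^k}{j}\equiv 0$ unless $j$ is $0$ or $p^k$ (Lucas), which is what makes the additive leading form compatible with a genuinely $p$-power-supported $Q$-expansion.
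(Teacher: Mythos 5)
Your proposal does not contain a proof of the part of the statement that goes beyond Theorem \ref{maintheorem}; the two steps you leave as hopes are exactly the hard content, and as sketched they would fail. First, your final subtraction step needs much more than what your leading-form argument could give. To delete a term $a_jQ^j$ from $F$ and remain in $S_\alpha$ you must verify the condition of Lemma \ref{4removedordemonomios}, and membership in $S_\alpha$ quantifies over \emph{all} $Q'\in\Psi_\alpha$: since $\nu_{Q'}(F)=\delta\nu(Q')$ and $\nu(Q')$ approaches $B=\sup\nu(\Psi_\alpha)$, the term can only be removed if $\nu(a_jQ^j)\geq\delta B$, a bound uniform over the whole set $\Psi_\alpha$. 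An additivity statement about the image of $F$ in the graded ring of the single truncation $\nu_Q$ would at best give $\nu(a_jQ^j)>\delta\nu(Q)$ for that one $Q$, which is strictly weaker and does not allow the subtraction. This is precisely why the paper proves (Proposition \ref{Lambda} \textbf{(v)}, via the stabilization results in Lemma \ref{7essencial}) that for $Q$ deep enough in a final subset every non-$p$-power index $i\neq 0$ satisfies $\nu(f_iQ^i)\geq\delta B$, and why the surviving coefficients have the precise values $(\delta-i)B$.

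Second, your justification of the additivity itself (``any mixed term of non-$p$-power degree would already be achievable by a polynomial of smaller degree, contradicting minimality of $\deg F$'') is an assertion, not an argument: minimality of $\deg F$ in $S_\alpha$ says nothing directly about the shape of an initial form with respect to one $\nu_Q$. The mechanism that actually converts a hypothetical non-$p$-power term of low value into a contradiction with minimality is a Hasse-derivative argument: one takes $b=p^eb_\infty$ with $b_\infty$ as in Proposition \ref{bfinal}, uses Proposition \ref{2dp} and Lemma \ref{Lemafinal} to show $\delta_Q(\partial_bF)>0$ for all $Q$ in a final subset on which $I(Q)=\{b_\infty\}$ and $\nu(p)>\delta(B-\nu(Q))$, and concludes $\partial_bF\in S_\alpha$ with $\deg(\partial_bF)<\deg F$. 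Nothing in your sketch plays this role. Finally, the translation mechanism $Q\mapsto Q+b$ that you place at the center of the construction is both unjustified and unnecessary: you give no reason why $Q+b$ should again lie in $\Psi_\alpha$, nor why a translation could clear a prescribed non-$p$-power coefficient (the substitution scrambles all coefficients at once), and the paper never needs it --- the key polynomial $Q$ is simply chosen far enough out in the existing final subset of $\Psi_\alpha$, and only $F$ is modified, by subtracting the monomials of value at least $\delta B$.
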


The main idea for the proof of Theorem \ref{them2} is the following. Since ${\rm rk}(\nu)=1$ and the set $\nu(\Psi_\alpha)$ is bounded it has a supremum. We set $B=\sup(\nu(\Psi_\alpha))$. Then, for $Q\in\Psi_\alpha$ and $F$ a limit key polynomial for $\Psi_\alpha$, if a monomial $a_iQ^i$ in the $Q$-expansion of $F$ has value greater than $p^rB$, then $F-a_iQ^i$ is also a limit key polynomial for $\Psi_\alpha$ (Lemma \ref{4removedordemonomios}). We then study the behaviour of the values of the coefficients in the expansions of $F$ in elements of $\Psi_\alpha$. Using this we show that, for sufficiently large $Q\in\Psi_\alpha$ the terms $a_iQ_i$ in the $Q$-expansion of $F$ for which $i$ is not a power of $p$ (and $i\neq 0$) have value greater than $p^rB$. Hence, we can eliminate them to obtain the desired limit key polynomial for $\Psi_\alpha$. Theorems \ref{maintheorem} and \ref{them2} will follow as immediate consequence of Proposition \ref{Lambda}.

The reason to present a form of limit key polynomials as in Theorem \ref{them2} is because the roots of such polynomials are simpler. Another reason for this is to classify the defect of an extension. In \cite{Kuhl}, Kuhlmann presents a classification of Artin-Schreier defect extensions as dependent or independent. We hope that the characterization of limit key polynomials as in Theorem \ref{them2} will allow us to present similar classifications for defect extensions which are not necessarily Artin-Schreier.

We observe that the results presented here are not new. For instance, Theorems \ref{maintheorem} and \ref{them2} appear in \cite{MaSpiv}. The main difference is that there they consider a specific type of sequence of key polynomials while here we consider the set of all key polynomials. We point out that \cite{MaSpiv} was a source of ideas for this paper. However, some steps in \cite{MaSpiv} are not clear to us, so we developed alternative proofs. We believe that our version of these proofs are simpler and clearer. Also, Theorem \ref{maintheorem} is Theorem 3.5 of \cite{Vaq_3} and a generalization of it to valuations of arbitrary rank can be found in \cite{AFFGNR} (Theorem 4.9).
\\
\par\medskip
\textbf{Acknowledgements.} We would like to thank the anonymous referee for a careful reading, for providing useful suggestions and for pointing out a few mistakes in an earlier version of this paper.

\section{Preliminaires}
\begin{Def}
Take a commutative ring $R$ with unity. A \index{Valuation}\textbf{valuation} on $R$ is a mapping $\nu:R\lra \Gamma_\infty :=\Gamma \cup\{\infty\}$ where $\Gamma$ is an ordered abelian group (and the extension of addition and order to $\infty$ in the obvious way), with the following properties:
\begin{description}
\item[(V1)] $\nu(fg)=\nu(f)+\nu(g)$ for every $f,g\in R$.
\item[(V2)] $\nu(f+g)\geq \min\{\nu(f),\nu(g)\}$ for every $f,g\in R$.
\item[(V3)] $\nu(1)=0$ and $\nu(0)=\infty$.
\end{description}
\end{Def}

\subsection{Key polynomials}

In this section we discuss the basics about key polynomials as presented in \cite{SopivNova}.

\begin{Obs}
The equality \eqref{KP} implies that
\begin{equation}\label{UT}
\nu(\partial_bf)\geq\nu(f)-b\epsilon(f),
\end{equation}
for every $b\in\mathbb{N}$. Hence, if $\epsilon(g)>\epsilon(f)$, then for every $b\in\mathbb{N}$ we have
\[
\nu(\partial_bf)>\nu(f)-b\epsilon(g).
\]
\end{Obs}

\begin{Not}
For $f\in K[x]\setminus\{0\}$ we denote by
\[
I(f):=\{b\in\mathbb{N}\mid \mbox{the equality holds in } \eqref{UT}\}.
\]
\end{Not}

\begin{Lema}[Corollary 4.4 of \cite{Matheus}]\label{paraprovarprod}
Let $f,g\in K[x]\setminus\{0\}$. We have
\begin{equation}\label{epsilonmultil}
\epsilon(fg)=\max\{\epsilon(f),\epsilon(g)\}.
\end{equation}
\end{Lema}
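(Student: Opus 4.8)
The plan is to base everything on the Leibniz-type formula for Hasse derivatives,
\[
\partial_b(fg)=\sum_{i+j=b}\partial_if\cdot\partial_jg\qquad(b\in\N),
\]
combined with the inequality \eqref{UT}. The inequality $\epsilon(fg)\le\max\{\epsilon(f),\epsilon(g)\}$ is the easy half: writing $\epsilon:=\max\{\epsilon(f),\epsilon(g)\}$, for any $b$ and any $i+j=b$ the estimate \eqref{UT} applied to $f$ and to $g$ gives
\[
\nu(\partial_if\cdot\partial_jg)=\nu(\partial_if)+\nu(\partial_jg)\ge\bigl(\nu(f)-i\epsilon(f)\bigr)+\bigl(\nu(g)-j\epsilon(g)\bigr)\ge\nu(fg)-b\epsilon,
\]
so that $\nu(\partial_b(fg))\ge\nu(fg)-b\epsilon$ for every $b$, and hence $\bigl(\nu(fg)-\nu(\partial_b(fg))\bigr)/b\le\epsilon$ for every $b\ge 1$.

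For the reverse inequality it suffices, after relabelling so that $\epsilon(f)\ge\epsilon(g)$ (so $\epsilon=\epsilon(f)$), to exhibit some $b\ge 1$ with $\nu(\partial_b(fg))=\nu(fg)-b\epsilon$. Let $b_0$ be the least positive integer in $I(f)$ and $c_0$ the least positive integer in $I(g)$; these exist because a polynomial has only finitely many nonzero Hasse derivatives, so the maximum in \eqref{KP} is attained. In the expansion of $\partial_{b_0}(fg)$ the term $\partial_{b_0}f\cdot g$ has value exactly $\nu(\partial_{b_0}f)+\nu(g)=\nu(fg)-b_0\epsilon$, while by the computation above any other term $\partial_if\cdot\partial_jg$ with $i+j=b_0$, $i<b_0$, has value $>\nu(fg)-b_0\epsilon$ unless $j=b_0\in I(g)$ and $\epsilon(g)=\epsilon$. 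Thus $\partial_{b_0}f\cdot g$ is the unique term of minimal value — and we are done — whenever $\epsilon(f)>\epsilon(g)$, or $\epsilon(f)=\epsilon(g)$ but $b_0\notin I(g)$; and by the symmetric argument applied to $\partial_{c_0}(fg)$ we are likewise done when $\epsilon(f)=\epsilon(g)$ and $c_0=\min I(g)<b_0$. In short, the argument closes whenever $\min I(f)\neq\min I(g)$.

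The remaining case, $\epsilon(f)=\epsilon(g)=\epsilon$ with $b_0:=\min I(f)=\min I(g)$, is where I expect the real difficulty to lie: now $\partial_{b_0}(fg)$ has two terms of minimal value, $g\,\partial_{b_0}f$ and $f\,\partial_{b_0}g$, whose leading parts could cancel, and the same can happen at the orders $nb_0$. The way I would overcome this is to pass to the graded ring $\mathrm{gr}_\nu K[x]$, in which homogeneous elements are not zero divisors. If the leading parts of $g\,\partial_{b_0}f$ and $f\,\partial_{b_0}g$ cancel, then $\mathrm{in}(\partial_{b_0}f)/\mathrm{in}(f)=-\mathrm{in}(\partial_{b_0}g)/\mathrm{in}(g)$ in the graded fraction field; consequently the term $\partial_{b_0}f\cdot\partial_{b_0}g$ appearing in $\partial_{2b_0}(fg)$ has leading part $-\bigl(\mathrm{in}(f)/\mathrm{in}(g)\bigr)\mathrm{in}(\partial_{b_0}g)^{2}\neq 0$. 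Iterating this bookkeeping — tracking, at each order $nb_0$, the only possible competitors $g\,\partial_{nb_0}f$ and $f\,\partial_{nb_0}g$ (which occur only when $nb_0\in I(f)$, resp. $nb_0\in I(g)$) — one shows that for a suitable multiple $nb_0$ the expansion of $\partial_{nb_0}(fg)$ has nonzero leading part of value $\nu(fg)-nb_0\epsilon$, whence $\epsilon(fg)\ge\epsilon$. Together with the first inequality this yields \eqref{epsilonmultil}.
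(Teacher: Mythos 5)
The paper never proves this lemma --- it is imported as Corollary 4.4 of \cite{Matheus} --- so your argument has to stand entirely on its own. Its first half is fine, and so are the parts of the second half you actually carry out: if $\epsilon(f)>\epsilon(g)$, or if $\epsilon(f)=\epsilon(g)$ and $\min I(f)\neq\min I(g)$, then the derivative of order $\min I(f)$ (resp.\ $\min I(g)$) of $fg$ has a unique term of minimal value $\nu(fg)-b\epsilon$ and the inequality $\epsilon(fg)\geq\epsilon$ follows. The gap is the remaining case $\epsilon(f)=\epsilon(g)$, $\min I(f)=\min I(g)=b_0$, which you only sketch, and the sketch rests on a false premise: at order $nb_0$ the terms of $\sum_{i+j=nb_0}\partial_i f\,\partial_j g$ attaining the minimal value are exactly those with $i\in I(f)\cup\{0\}$ and $j\in I(g)\cup\{0\}$, not only $g\,\partial_{nb_0}f$ and $f\,\partial_{nb_0}g$; nothing you have established rules out, say, $I(f)=\{b_0,b_0+1\}$ and $I(g)=\{b_0,2b_0-1\}$, in which case the pair $(b_0+1,2b_0-1)$ competes at order $3b_0$. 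Moreover, even at order $2b_0$, showing that the single term $\partial_{b_0}f\,\partial_{b_0}g$ has nonzero initial form of the expected value does not preclude its cancellation against $g\,\partial_{2b_0}f$ and $f\,\partial_{2b_0}g$, and you give no argument that your iteration ever reaches an order at which no cancellation occurs. As written, the hard case is not proved.

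Your graded-ring idea does close this case, but the bookkeeping should be done globally rather than order by order. Assume $\epsilon(f)=\epsilon(g)=\epsilon$ and, for contradiction, $\epsilon(fg)<\epsilon$, i.e. $\nu(\partial_b(fg))>\nu(fg)-b\epsilon$ for every $b\geq 1$. In $D[t]$, where $D=\mathrm{gr}_\nu K[x]$ is a domain, set $P(t)=\mathrm{in}(f)+\sum_{i\in I(f)}\mathrm{in}(\partial_i f)\,t^i$ and $R(t)=\mathrm{in}(g)+\sum_{j\in I(g)}\mathrm{in}(\partial_j g)\,t^j$. For each $b\geq 1$ the coefficient of $t^b$ in $P(t)R(t)$ is exactly the sum of the initial forms of those terms $\partial_i f\,\partial_j g$ with $i+j=b$ whose value equals $\nu(fg)-b\epsilon$, so the assumption forces all these coefficients to vanish and $P(t)R(t)$ is the nonzero constant $\mathrm{in}(f)\,\mathrm{in}(g)$. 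This is impossible, since $t$-degrees add over a domain and $\deg_t P\geq \min I(f)\geq 1$. This single argument replaces (and in fact subsumes) the order-by-order iteration you propose.
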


\begin{Cor}\label{irredutivel}
If $Q\in K[x]$ is a key polynomial for $\nu$, then $Q$ is irreducible. 
\end{Cor}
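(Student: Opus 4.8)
The plan is to argue by contradiction using the multiplicativity of the level proved in Lemma \ref{paraprovarprod}. Suppose $Q$ is a key polynomial for $\nu$ but $Q$ is reducible, so that we may write $Q=fg$ with $f,g\in K[x]$ both of degree strictly smaller than $\deg(Q)$ (we may take $f,g$ monic after adjusting by a unit of $K$, which does not change levels). Since $Q$ is a key polynomial, the defining property gives $\epsilon(f)<\epsilon(Q)$ and $\epsilon(g)<\epsilon(Q)$, because both $f$ and $g$ have degree less than $\deg(Q)$.

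On the other hand, Lemma \ref{paraprovarprod} applied to the factorization $Q=fg$ yields
\[
\epsilon(Q)=\epsilon(fg)=\max\{\epsilon(f),\epsilon(g)\}.
\]
Combining this with the two strict inequalities above gives $\epsilon(Q)=\max\{\epsilon(f),\epsilon(g)\}<\epsilon(Q)$, a contradiction. Hence no such factorization exists and $Q$ is irreducible.

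I do not anticipate any real obstacle here: the only points to be slightly careful about are that reducibility of a monic polynomial over the field $K$ does produce two genuine factors of smaller positive degree (this is immediate since $\deg(Q)\ge 1$ and $Q\notin K$), and that one is allowed to rescale the factors to be monic without affecting $\epsilon$ — indeed $\epsilon(cf)=\epsilon(f)$ for $c\in K^\times$ since $\nu(\partial_b(cf))=\nu(c)+\nu(\partial_b f)$ and $\nu(cf)=\nu(c)+\nu(f)$, so the quotient in \eqref{KP} is unchanged. Everything else is a direct substitution into the definition of key polynomial and the statement of Lemma \ref{paraprovarprod}.
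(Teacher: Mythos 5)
Your proof is correct and follows essentially the same route as the paper: factor $Q=fg$ with both factors of smaller degree, apply Lemma \ref{paraprovarprod} to get $\epsilon(Q)=\max\{\epsilon(f),\epsilon(g)\}$, and contradict the defining property of a key polynomial. The extra remarks about rescaling to monic factors are harmless but unnecessary, since the definition of key polynomial imposes $\epsilon(f)<\epsilon(Q)$ for all nonzero $f$ of smaller degree, monic or not.
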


\begin{proof}
Suppose, aiming for a contradiction, that $Q$ is not irreducible. Write $Q=fg$ with $f,g\in K[x]$ of degree smaller that $\deg(Q)$. By Lemma \ref{paraprovarprod} we have $\epsilon(Q)=\epsilon(f)$ or $\epsilon(Q)=\epsilon(g)$. This is a contradiction to the definition of key polynomial.
\end{proof}

For the remaining of this section, let $Q$ be a key polynomial for $\nu$ and set $\epsilon:=\epsilon(Q)$.

\begin{Obs}\label{1card(SQ(f))=1}
The following properties are satisfied.
\begin{description}
\item[(i)] If $S_{Q}(f)$ is a singleton, then $\nu_{Q}(f)=\nu(f)$.
\item[(ii)] If $g\in K[x]$ is such $\nu_{Q}(g)<\nu_{Q}(f)$, then
\[
\nu_{Q}(f+g)=\nu_{Q}(g)\mbox{ and }S_Q(f+g)=S_Q(g).
\]
\item[(iii)] If $\nu_Q(g)\leq\nu_Q(f)$ and $\delta_{Q}(g)>\delta_Q(f)$, then
\[
\nu_Q(f+g)=\nu_Q(g)\mbox{ and }\delta_Q(g)=\delta_Q(f+g).
\] 
\end{description}
\end{Obs}

\begin{Lema}\label{prod}
Let $f\in K[x]$ be a polynomial such that $\epsilon(f)<\epsilon(Q)$ and $f=qQ+r$ the $Q$-expansion of $f$. We have
\[
\nu(f)=\nu(r)<\nu(qQ),
\]
e hence $\delta_Q(f)=0$.
\end{Lema}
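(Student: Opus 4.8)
The plan is to argue by contradiction that $\nu(r)<\nu(qQ)$ — from which $\nu(f)=\nu(qQ+r)=\nu(r)$ is immediate — and then to upgrade this to $\delta_Q(f)=0$. The heart of the argument is a Hasse-derivative estimate applied to the identity $qQ=f-r$.

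First I would dispose of the degenerate cases, which already use Lemma~\ref{paraprovarprod}. If $r=0$ then $f=qQ$ with $q\neq 0$, so $\epsilon(f)=\epsilon(qQ)=\max\{\epsilon(q),\epsilon(Q)\}\geq\epsilon(Q)$, contradicting $\epsilon(f)<\epsilon(Q)$; hence $r\neq 0$. If $q=0$ the assertion is trivial, so assume $q,r\neq 0$. Since $\deg r<\deg Q$ and $Q$ is a key polynomial, $\epsilon(r)<\epsilon(Q)$.

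Now suppose, aiming at a contradiction, that $\nu(r)\geq\nu(qQ)$; then $\nu(f)=\nu(qQ+r)\geq\nu(qQ)$. By Lemma~\ref{paraprovarprod} we have $\epsilon(qQ)=\max\{\epsilon(q),\epsilon(Q)\}\geq\epsilon(Q)$, so both $\epsilon(f)$ and $\epsilon(r)$ are strictly smaller than $\epsilon(qQ)$. Since $qQ\notin K$, the maximum defining $\epsilon(qQ)$ in~\eqref{KP} is attained on the finite nonempty set $\{1,\dots,\deg(qQ)\}$ (for $b>\deg(qQ)$ one has $\partial_b(qQ)=0$), so we may pick $b\in I(qQ)$ with $1\leq b\leq\deg(qQ)$, i.e.\ $\nu(\partial_b(qQ))=\nu(qQ)-b\epsilon(qQ)$. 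On the other hand $\partial_b(qQ)=\partial_bf-\partial_br$, and~\eqref{UT} together with $b\geq 1$, $\nu(f)\geq\nu(qQ)$, $\nu(r)\geq\nu(qQ)$, $\epsilon(f)<\epsilon(qQ)$ and $\epsilon(r)<\epsilon(qQ)$ gives
\[
\nu(\partial_bf)\geq\nu(f)-b\epsilon(f)>\nu(qQ)-b\epsilon(qQ),\qquad \nu(\partial_br)\geq\nu(r)-b\epsilon(r)>\nu(qQ)-b\epsilon(qQ).
\]
Hence $\nu(\partial_b(qQ))\geq\min\{\nu(\partial_bf),\nu(\partial_br)\}>\nu(qQ)-b\epsilon(qQ)$, a contradiction. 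Therefore $\nu(r)<\nu(qQ)$ and $\nu(f)=\nu(r)$.

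It remains to deduce $\delta_Q(f)=0$, i.e.\ $\nu(f_0)<\nu(f_iQ^i)$ for every $i\geq 1$ in the $Q$-expansion $f=\sum_i f_iQ^i$; equivalently $\nu(r)<\nu_Q(qQ)=\nu(Q)+\nu_Q(q)$. For this it is enough that $\nu(q)=\nu_Q(q)$, and I would obtain this by induction on $\deg_Q(f)$: when $\epsilon(q)<\epsilon(Q)$ it is the inductive hypothesis applied to $q$ (which has strictly smaller $Q$-degree, and whose $Q$-expansion coefficients again have degree $<\deg Q$), and the remaining case reduces to it by recursing once more through the $Q$-expansion of $q$. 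Alternatively, assuming $\delta:=\delta_Q(f)\geq 1$, one can apply $\partial_b$ with $b\in I(Q)$ directly to the $Q$-expansion and check, via the Leibniz rule, that the $Q^{\delta}$-term contributes a non-cancelling leading form to $\partial_bf$, forcing $\nu(\partial_bf)=\nu_Q(f)-b\epsilon(Q)$ and hence $\nu_Q(f)-\nu(f)\geq b(\epsilon(Q)-\epsilon(f))>0$, contradicting $\nu_Q(f)\leq\nu(f)$. I expect this last verification to be the only delicate point: it is exactly here that the characteristic exponent $p$ can intervene, since for $p\mid\delta$ the naive leading term $\delta f_\delta Q^{\delta-1}\partial_bQ$ of $\partial_b(f_\delta Q^\delta)$ vanishes and one must choose $b$ adapted to the $p$-power structure of $Q^{\delta}$. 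By contrast, the core estimate above rests only on~\eqref{UT} and Lemma~\ref{paraprovarprod} and is insensitive to the characteristic.
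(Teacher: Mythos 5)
Your proof of the core assertion $\nu(f)=\nu(r)<\nu(qQ)$ is correct and is essentially the paper's own argument: both proofs pick $b\in I(qQ)$, write $\partial_b(qQ)=\partial_bf-\partial_br$, and combine \eqref{UT} with the strict inequalities $\epsilon(f),\epsilon(r)<\epsilon(qQ)$ coming from Lemma \ref{paraprovarprod} and the key-polynomial property applied to $r$. The paper argues directly, showing $\nu(qQ)>\gamma:=\min\{\nu(f),\nu(r)\}$, while you argue by contradiction; this is cosmetic. Your preliminary elimination of $r=0$ (via $\epsilon(qQ)\geq\epsilon(Q)$) is a point the paper leaves implicit and is welcome.

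Where you depart from the paper is the final clause, and there you have manufactured a difficulty the lemma does not contain. In the statement, ``$f=qQ+r$ the $Q$-expansion of $f$'' means $\deg(q)<\deg(Q)$; this is how the lemma is invoked throughout (Remark \ref{obsdesnecessaria} and Proposition \ref{coeficientelider1} apply it to products of two polynomials of degree $<\deg(Q)$, so $\deg_Q(f)\leq 1$). Under that reading the $Q$-expansion is literally $f=qQ+r$ with $f_1=q$, $f_0=r$, and $\nu(r)<\nu(qQ)$ says exactly $S_Q(f)=\{0\}$, i.e.\ $\delta_Q(f)=0$; no further argument is needed, which is why the paper writes ``hence''. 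If one did insist on your broader reading, with $q$ an arbitrary Euclidean quotient, your sketch would not yet close the gap: $\nu(r)<\nu(qQ)$ does not control the individual values $\nu(f_iQ^i)$ when cancellation occurs in $\sum_{i\geq 1}f_iQ^i$, your induction explicitly leaves open precisely the problematic case $\epsilon(q)\geq\epsilon(Q)$ (``recursing once more'' is not an argument there), and your derivative alternative is in substance Proposition \ref{2dp}, whose proof in the paper passes through Lemma \ref{deltaprod} and Remark \ref{obsdesnecessaria}, hence through the value statement of this very lemma, so invoking it here would require arranging the order of dependence carefully. In short: the statement as the paper uses it is fully proved by your argument; the extra generality you aim at in the last step is neither required nor established by your sketch.
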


\begin{proof}
Let $\gamma=\min\{\nu(f),\nu(r)\}$. It is enough to show that $\nu(qQ)>\gamma$. Since the degrees of $\epsilon(f)<\epsilon(Q)$, $\deg(r)M\deg(Q)$ and $Q$ is a key polynomial, Lemma \ref{paraprovarprod} gives us
\[
\epsilon=\epsilon(qQ)>\epsilon':=\max\{\epsilon(f),\epsilon(r)\}.
\]
Let $b\in I(qQ)$. Then
\[
\nu(qQ)-b\epsilon=\nu(\partial_b(qQ))\geq\min\{\nu(\partial_b(f)),\nu(\partial_br)\}\geq\gamma-b\epsilon'>\gamma-b\epsilon,
\]
and hence $\nu(qQ)>\gamma$.
\end{proof}

\begin{Obs}\label{obsdesnecessaria}
Let $f,g\in K[x]\setminus\{0\}$ be polynomials of degree smaller than $\deg(Q)$ and $n,m\in\mathbb{N}_0$. Then
\begin{equation}\label{eqsobremichelnovo}
\nu_Q(fQ^ngQ^m)=\nu(fQ^n)+\nu(gQ^m)\mbox{ and }\delta_Q(fQ^ngQ^m)=n+m.
\end{equation}
Indeed, let $fg=qQ+r$ be the $Q$-expansion of $fg$. By Lemma \ref{prod} we have $\nu(qQ)>\nu(r)=\nu(fg)$. Since $fQ^ngQ^m=qQ^{n+m+1}+rQ^{n+m}$, we have
\[
\nu\left(qQ^{n+m+1}\right)>\nu\left(rQ^{n+m}\right)=\nu(fQ^n)+\nu(gQ^m).
\]
Hence, \eqref{eqsobremichelnovo} follows.
\end{Obs}

\begin{Lema}\label{deltaprod}
Let $f,g\in K[x]\setminus\{0\}$. Then
\[
\delta_Q(fg)=\delta_Q(f)+\delta_Q(g)\mbox{ and }\nu_Q(fg)=\nu_Q(f)+\nu_Q(g).
\]
Moreover, if $S_Q(f)$ and $S_Q(g)$ are singletons, then $S_Q(f+g)=\{\delta_Q(f)+\delta_Q(g)\}$.
\end{Lema}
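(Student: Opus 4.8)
=== PROOF PROPOSAL FOR LEMMA \ref{deltaprod} ===

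\textbf{Plan of proof.} The strategy is to reduce the general statement to the case already handled in Remark \ref{obsdesnecessaria}, by writing each polynomial in terms of its $Q$-expansion and isolating a single dominant monomial. First I would write the $Q$-expansions
\[
f=\sum_{i} f_iQ^i\quad\text{and}\quad g=\sum_{j}g_jQ^j,
\]
and decompose $f=f'+f''$, where $f'$ collects the terms $f_iQ^i$ with $i\in S_Q(f)$ and $f''$ collects the remaining terms. Among the indices in $S_Q(f)$, the term $f_\delta Q^\delta$ with $\delta=\delta_Q(f)$ is the one of largest degree. Using Remark \ref{1card(SQ(f))=1}\textbf{(iii)} (and \textbf{(ii)} to absorb the strictly-larger-value terms), I expect that $\nu_Q(f)=\nu(f_\delta Q^\delta)$, $\delta_Q(f)=\delta$, and that replacing $f$ by $f_\delta Q^\delta$ changes neither the truncated value nor $\delta_Q$ of any product or sum in which $f$ participates with the roles above — more precisely, $\nu_Q(f-f_\delta Q^\delta)\geq \nu_Q(f)$ with $\delta_Q$ strictly smaller, so Remark \ref{1card(SQ(f))=1} applies. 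The same is done for $g$, isolating $g_{\delta'}Q^{\delta'}$ with $\delta'=\delta_Q(g)$.

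\textbf{Key steps in order.} (1) Reduce to monomials: show $\nu_Q(fg)=\nu_Q((f_\delta Q^\delta)(g_{\delta'}Q^{\delta'}))$ and $\delta_Q(fg)=\delta_Q((f_\delta Q^\delta)(g_{\delta'}Q^{\delta'}))$. This requires expanding $fg=(f'+f'')(g'+g'')$ and checking that the cross terms and the $f''g''$ term contribute values $\geq\nu_Q(fg)$ but strictly smaller $\delta_Q$, or strictly larger value, so that by Remark \ref{1card(SQ(f))=1}\textbf{(ii)} and \textbf{(iii)} they do not affect $\nu_Q$ or $\delta_Q$ of the sum. Here one must be a little careful: within $f'$ there may be several monomials $f_iQ^i$, $i\in S_Q(f)$, all of the same value $\nu_Q(f)$; the product $f'g'$ is itself a sum of monomials, and one has to check that the top-degree one $f_\delta Q^\delta g_{\delta'}Q^{\delta'}$ survives, using \eqref{eqsobremichelnovo}. (2) Apply Remark \ref{obsdesnecessaria}: by \eqref{eqsobremichelnovo}, $\nu_Q(f_\delta Q^\delta g_{\delta'}Q^{\delta'})=\nu(f_\delta Q^\delta)+\nu(g_{\delta'}Q^{\delta'})=\nu_Q(f)+\nu_Q(g)$ and $\delta_Q(f_\delta Q^\delta g_{\delta'}Q^{\delta'})=\delta+\delta'=\delta_Q(f)+\delta_Q(g)$. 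Combining with step (1) gives both asserted equalities. (3) For the "moreover": if $S_Q(f)=\{\delta\}$ and $S_Q(g)=\{\delta'\}$ are singletons, then $f'=f_\delta Q^\delta$ and $g'=g_{\delta'}Q^{\delta'}$ are genuine monomials, so $fg=f_\delta g_{\delta'}Q^{\delta+\delta'}+(\text{lower-}\delta_Q\text{ terms of value}\geq)$; since by \eqref{eqsobremichelnovo} the leading monomial has $\nu_Q$ equal to $\nu_Q(fg)$ and strictly largest $\delta_Q$ among all contributing monomials, Remark \ref{1card(SQ(f))=1}\textbf{(iii)} forces $S_Q(fg)=\{\delta+\delta'\}$.

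\textbf{Main obstacle.} The delicate point is step (1): controlling the product $f'g'$ when $S_Q(f)$ or $S_Q(g)$ is not a singleton. One needs that among all the monomials $f_iQ^i g_jQ^j$ with $i\in S_Q(f)$, $j\in S_Q(g)$ — all of which have $\nu_Q$-value exactly $\nu_Q(f)+\nu_Q(g)$ after reduction via Lemma \ref{prod} applied to $f_ig_j$ — the unique one of maximal total $Q$-degree is $f_\delta g_{\delta'}Q^{\delta+\delta'}$, and that the $Q$-expansion step (each $f_ig_j=qQ+r$) does not create a higher-degree survivor of the same value. This is exactly what Remark \ref{obsdesnecessaria} is designed to feed, but assembling the sum over $i,j$ and invoking Remark \ref{1card(SQ(f))=1}\textbf{(iii)} repeatedly to peel off the non-maximal terms needs to be done carefully; I would handle it by induction on the number of nonzero terms in the $Q$-expansions of $f$ and $g$, the base case being precisely \eqref{eqsobremichelnovo}.
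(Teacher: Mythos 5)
Your proposal is correct and follows essentially the same route as the paper: the paper's proof simply expands $fg$ as the double sum $\sum_{i,j} f_iQ^ig_jQ^j$, applies Remark \ref{obsdesnecessaria} to each monomial product, and then combines the terms by iterated use of Remark \ref{1card(SQ(f))=1} \textbf{(ii)} and \textbf{(iii)}, which is exactly your reduction-to-monomials scheme (your splitting into $f'+f''$ and the induction on the number of terms is just a more explicit bookkeeping of that same iteration).
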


\begin{proof}
Let
\[
f=f_nQ^n+\ldots+f_0\mbox{ and }g=g^mQ^m+\ldots+g_0
\]
be the $Q$-expansions of $f$ and $g$, respectively. By Remark \ref{obsdesnecessaria} and Remark \ref{1card(SQ(f))=1} \textbf{(ii)} and \textbf{(iii)} applied iteractively to the sum
\[
\sum_{i=0}^n\sum_{j=0}^m f_iQ^ig_jQ^j
\]
we obtain the result.
\end{proof}

\begin{Prop}[Proposition 2.6 of \cite{SopivNova}]
The map $\nu_Q$ is a valuation of $K[x]$.
\end{Prop}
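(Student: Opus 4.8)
The plan is to verify the three valuation axioms (V1), (V2), (V3) for $\nu_Q$ directly from the definition of the $Q$-expansion. Only multiplicativity (V1) carries any real content, and in fact the essential work has already been isolated in Lemma \ref{deltaprod}; the remaining two axioms are formal consequences of the definition together with the fact that $\nu$ is itself a valuation.

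First I would dispose of (V3): the $Q$-expansion of $1$ is $1=1\cdot Q^0$, so $\nu_Q(1)=\nu(1)=0$, and $\nu_Q(0)=\nu(0)=\infty$ by the usual convention. For (V2) the key observation is that the $Q$-expansion behaves additively. If $f=\sum_{i=0}^n f_iQ^i$ and $g=\sum_{i=0}^n g_iQ^i$ are the $Q$-expansions of $f$ and $g$ (padding with zero coefficients so that both sums run over the same index set), then each $f_i+g_i$ is either $0$ or has degree $<\deg(Q)$, so $f+g=\sum_{i=0}^n(f_i+g_i)Q^i$ is precisely the $Q$-expansion of $f+g$. Using that $\nu$ satisfies (V2) and (V3) we then get
\[
\nu_Q(f+g)=\min_{0\le i\le n}\nu\bigl((f_i+g_i)Q^i\bigr)\ge\min_{0\le i\le n}\min\{\nu(f_iQ^i),\nu(g_iQ^i)\}\ge\min\{\nu_Q(f),\nu_Q(g)\}.
\]

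For (V1): if $f=0$ or $g=0$ then $fg=0$ and both sides equal $\infty$, while if $f,g\in K[x]\setminus\{0\}$ the identity $\nu_Q(fg)=\nu_Q(f)+\nu_Q(g)$ is exactly the second assertion of Lemma \ref{deltaprod}. So no genuinely new argument is required at this point. The step I expect to be the real obstacle is not in this proposition at all but upstream, in the proof of Lemma \ref{deltaprod} (and in Remarks \ref{1card(SQ(f))=1} and \ref{obsdesnecessaria} feeding into it): there one must rule out that degree-raising cancellations in the product $\sum_{i,j} f_iQ^i g_jQ^j$ destroy multiplicativity, and this is precisely where the hypothesis that $Q$ is a key polynomial enters, through Lemma \ref{prod} and the comparison of levels $\epsilon$.
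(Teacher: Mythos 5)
Your proof is correct. Note, however, that the paper does not supply its own argument for this proposition: it is quoted with a citation to Proposition 2.6 of \cite{SopivNova}, so there is no in-paper proof to diverge from. Your route --- observing that the $Q$-expansion is termwise additive so that (V2) follows from (V2) for $\nu$, that (V3) is immediate, and that (V1) for nonzero $f,g$ is precisely the second assertion of Lemma \ref{deltaprod} (with the zero case handled by convention) --- is a valid self-contained derivation from material the paper has already established, and you correctly locate where the key-polynomial hypothesis actually does work, namely in Lemma \ref{prod} and Remark \ref{obsdesnecessaria} feeding into Lemma \ref{deltaprod}, via the multiplicativity of $\epsilon$ from Lemma \ref{paraprovarprod}. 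This is essentially the same mechanism as in the cited reference, so nothing further is needed; the only cosmetic point is that once you invoke Lemma \ref{deltaprod} you should make explicit that it is stated for $f,g\in K[x]\setminus\{0\}$, which you already do by treating the zero case separately.
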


\subsection{Key polynomials of the same degree}

Let $Q_1,Q_2\in K[x]$ be key polynomials for $\nu$ of the same degree. Assume that $\nu(Q_2)\geq\nu(Q_1)$ and let $Q_2=Q_1+h$ be the $Q_1$-expansion of $Q_2$. Since $\nu(Q_2)\geq\nu(Q_1)$, we have
\[
\nu(h)\geq\nu_{Q_1}(Q_2)=\nu_{Q_2}(Q_1)=\nu(Q_1)
\]
and hence $\delta_{Q_1}(Q_2)=1$. Since $\deg(h)<\deg(Q)$, we have $\epsilon(h)<\epsilon(Q)$.

\begin{Lema}\label{2comparacao}
We have the following.
\begin{description}
\item[(i)] If $b\in I(Q_1)$, then $\nu(\partial_bQ_2)=\nu(\partial_bQ_1)$.
\item[(ii)] If $\nu(Q_1)=\nu(Q_2)$, then $\epsilon(Q_1)=\epsilon(Q_2)$ and $I(Q_1)=I(Q_2)$.
\item[(iii)] If $\nu(Q_1)<\nu(Q_2)$, then $\epsilon(Q_1)<\epsilon(Q_2)$.
\item[(iv)] Let $b_1\in I(Q_1)$ and $b_2\in I(Q_2)$. If
\[
\nu(Q_1)<\nu(Q_2),\nu(\partial_{b_2}Q_2)=\nu(\partial_{b_2}Q_1)\mbox{ and }\nu(\partial_{b_1}Q_2)=\nu(\partial_{b_1}Q_1),
\]
then $b_2\leq b_1$.
\end{description}
\end{Lema}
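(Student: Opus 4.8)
The four statements compare $\epsilon$, $I(\cdot)$, and the values of Hasse derivatives for two key polynomials $Q_1, Q_2$ of the same degree with $\nu(Q_1) \leq \nu(Q_2)$, where $Q_2 = Q_1 + h$ is the $Q_1$-expansion. The starting point for (i) is the Leibniz-type formula for Hasse derivatives: $\partial_b Q_2 = \partial_b(Q_1 + h) = \partial_b Q_1 + \partial_b h$. Since $\deg(h) < \deg(Q_1)$ we have $\epsilon(h) < \epsilon(Q_1)$, so by the Remark following \eqref{KP} (with $g = Q_1$, $f = h$), for every $b$ we get $\nu(\partial_b h) > \nu(h) - b\epsilon(Q_1) \geq \nu(Q_1) - b\epsilon(Q_1)$. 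If $b \in I(Q_1)$ then $\nu(\partial_b Q_1) = \nu(Q_1) - b\epsilon(Q_1)$, which is strictly smaller than $\nu(\partial_b h)$; hence by (V2) applied to the sum, $\nu(\partial_b Q_2) = \nu(\partial_b Q_1)$. This proves (i).

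For (ii), assume $\nu(Q_1) = \nu(Q_2)$. Pick $b_1 \in I(Q_1)$, so $\epsilon(Q_1) = (\nu(Q_1) - \nu(\partial_{b_1} Q_1))/b_1$; by (i) the right-hand side equals $(\nu(Q_2) - \nu(\partial_{b_1} Q_2))/b_1 \leq \epsilon(Q_2)$ by definition of the level. By symmetry (the hypothesis $\nu(Q_1) = \nu(Q_2)$ is symmetric, and $Q_1 = Q_2 + (-h)$ with $\deg(-h) < \deg(Q_2)$), the same argument gives $\epsilon(Q_2) \leq \epsilon(Q_1)$, so $\epsilon(Q_1) = \epsilon(Q_2)$. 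Now $I(Q_1) \subseteq I(Q_2)$: if $b \in I(Q_1)$, then $\nu(\partial_b Q_2) = \nu(\partial_b Q_1) = \nu(Q_1) - b\epsilon(Q_1) = \nu(Q_2) - b\epsilon(Q_2)$, so $b \in I(Q_2)$; again symmetry gives the reverse inclusion. For (iii), assume $\nu(Q_1) < \nu(Q_2)$ and take $b_1 \in I(Q_1)$; then by (i), $\epsilon(Q_2) \geq (\nu(Q_2) - \nu(\partial_{b_1} Q_2))/b_1 = (\nu(Q_2) - \nu(\partial_{b_1} Q_1))/b_1 > (\nu(Q_1) - \nu(\partial_{b_1} Q_1))/b_1 = \epsilon(Q_1)$, the strict inequality coming from $\nu(Q_2) > \nu(Q_1)$.

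For (iv), assume $\nu(Q_1) < \nu(Q_2)$ and that $b_1 \in I(Q_1)$, $b_2 \in I(Q_2)$ both satisfy $\nu(\partial_{b_i} Q_2) = \nu(\partial_{b_i} Q_1)$. Suppose for contradiction that $b_2 > b_1$. Since $b_1 \in I(Q_1)$,
\[
\epsilon(Q_1) = \frac{\nu(Q_1) - \nu(\partial_{b_1} Q_1)}{b_1} = \frac{\nu(Q_1) - \nu(\partial_{b_1} Q_2)}{b_1} < \frac{\nu(Q_2) - \nu(\partial_{b_1} Q_2)}{b_1} \leq \epsilon(Q_2),
\]
so by part (iii) and the definition of the level there is nothing contradictory yet; the point is to compare $b_1$ and $b_2$ directly via $\epsilon(Q_2)$. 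Since $b_2 \in I(Q_2)$ realizes $\epsilon(Q_2)$ and $b_1 \in I(Q_1)$ realizes $\epsilon(Q_1)$, and using $\nu(\partial_{b_2}Q_1) = \nu(\partial_{b_2}Q_2)$ together with \eqref{UT} for $Q_1$ at $b = b_2$, we get $\nu(\partial_{b_2} Q_1) \geq \nu(Q_1) - b_2 \epsilon(Q_1)$, hence $\nu(\partial_{b_2}Q_2) \geq \nu(Q_1) - b_2\epsilon(Q_1)$; combining with $\nu(\partial_{b_2}Q_2) = \nu(Q_2) - b_2\epsilon(Q_2)$ gives $\nu(Q_2) - b_2\epsilon(Q_2) \geq \nu(Q_1) - b_2\epsilon(Q_1)$, i.e. $b_2(\epsilon(Q_2) - \epsilon(Q_1)) \leq \nu(Q_2) - \nu(Q_1)$. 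Running the symmetric estimate with $b_1$ in place of $b_2$ — using $\nu(\partial_{b_1}Q_1) = \nu(Q_1) - b_1\epsilon(Q_1)$ exactly (since $b_1 \in I(Q_1)$) and $\nu(\partial_{b_1}Q_1) = \nu(\partial_{b_1}Q_2) \geq \nu(Q_2) - b_1\epsilon(Q_2)$ by \eqref{UT} — yields $\nu(Q_1) - b_1\epsilon(Q_1) \geq \nu(Q_2) - b_1\epsilon(Q_2)$, that is, $\nu(Q_2) - \nu(Q_1) \leq b_1(\epsilon(Q_2) - \epsilon(Q_1))$. Putting the two bounds together: $b_2(\epsilon(Q_2) - \epsilon(Q_1)) \leq \nu(Q_2) - \nu(Q_1) \leq b_1(\epsilon(Q_2) - \epsilon(Q_1))$, and since $\epsilon(Q_2) - \epsilon(Q_1) > 0$ by (iii), we conclude $b_2 \leq b_1$, contradicting $b_2 > b_1$.

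The main obstacle is part (iv): one must be careful to use the exact equality in \eqref{UT} at the indices lying in $I(Q_1)$ or $I(Q_2)$ and only the inequality at the other index, and to feed in the hypotheses $\nu(\partial_{b_i}Q_2) = \nu(\partial_{b_i}Q_1)$ at the right places so that the two opposing inequalities on $\nu(Q_2) - \nu(Q_1)$ pinch together. Parts (i)–(iii) are routine manipulations of the defining formula for $\epsilon$ together with the Leibniz rule for Hasse derivatives and the elementary fact that $\deg(h) < \deg(Q_1)$ forces $\epsilon(h) < \epsilon(Q_1)$.
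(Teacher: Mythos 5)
Your proof is correct and follows essentially the same route as the paper's: part (i) via $\nu(\partial_b h)>\nu(h)-b\epsilon(Q_1)\geq\nu(Q_1)-b\epsilon(Q_1)$, parts (ii)--(iii) by feeding (i) into the definition of $\epsilon$, and part (iv) using exactly the same ingredients (the equalities at $b_1\in I(Q_1)$, $b_2\in I(Q_2)$, the hypothesis equalities, and the inequality \eqref{UT} at the crossed indices), merely rearranged as two pinching bounds on $\nu(Q_2)-\nu(Q_1)$ instead of the paper's fraction comparison ending in a contradiction to $b_2\in I(Q_2)$.
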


\begin{proof}
Since
\[
\nu(\partial_bh)\geq\nu(h)-b\epsilon(h)>\nu(h)-b\epsilon(Q_1)\geq\nu(Q_1)-b\epsilon(Q_1)=\nu(\partial_bQ_1)
\]
we have
\[
\nu(\partial_bQ_2)=\nu(\partial_bQ_1+\partial_b h)=\nu(\partial_bQ_1).
\]
This proves \textbf{(i)}.

In order to prove \textbf{(ii)}, let $b\in I(Q_1)$ or $b\in I(Q_2)$. Since $\nu(Q_2)=\nu(Q_1)$, by \textbf{(i)} we have
\[
\nu(\partial_bQ_2)=\nu(\partial_bQ_1)=\nu(Q_1)-b\epsilon(Q_1)=\nu(Q_2)-b\epsilon(Q_1).
\]
Hence $\epsilon(Q_2)=\epsilon(Q_1)$ and $I(Q_2)=I(Q_1)$.

Take $b\in I(Q_1)$. By \textbf{(i)} we have $\nu(\partial_bQ_2)=\nu(\partial_bQ_1)$. Since $\nu(Q_2)>\nu(Q_1)$ we have
\[
\nu(\partial_bQ_2)=\nu(\partial_bQ_1)=\nu(Q_1)-b\epsilon(Q_1)<\nu(Q_2)-b\epsilon(Q_1).
\]
Hence $\epsilon(Q_2)>\epsilon(Q_1)$ and this proves \textbf{(iii)}.

For \textbf{(iv)}, suppose aiming for a contradiction, that $b_2>b_1$. Since $\nu(\partial_{b_2}Q_2)=\nu(\partial_{b_2}Q_1)$, $\nu(\partial_{b_1}Q_2)=\nu(\partial_{b_1}Q_1)$ and
\[
\epsilon(Q_1)=\frac{\nu(Q_1)-\nu(\partial_{b_1}Q_1)}{b_1}\geq\frac{\nu(Q_1)-\nu(\partial_{b_2}Q_1)}{b_2},
\]
we have
\[
\frac{\nu(Q_1)-\nu(Q_2)}{b_1}+\frac{\nu(Q_2)-\nu(\partial_{b_1}Q_2)}{b_1}\geq\frac{\nu(Q_1)-\nu(Q_2)}{b_2}+\frac{\nu(Q_2)-\nu(\partial_{b_2}Q_2)}{b_2}.
\]
Since $\nu(Q_2)>\nu(Q_1)$ and $b_2>b_1$, we have
\[
\frac{\nu(Q_1)-\nu(Q_2)}{b_1}<\frac{\nu(Q_1)-\nu(Q_2)}{b_2}.
\]
Therefore
\[
\frac{\nu(Q_2)-\nu(\partial_{b_1}Q_2)}{b_1}>\frac{\nu(Q_2)-\nu(\partial_{b_2}Q_2)}{b_2},
\]
and this is a contradiction to $b_2\in I(Q_2)$.
\end{proof}

\begin{Lema}\label{2sujestao}
Let $f\in K[x]\setminus\{0\}$, $f=f_nQ_2^n+\ldots+f_0$ the $Q_2$-expansion of $f$ and $r$, $0\leq r\leq n$, the largest natural number such that
\[
\nu_{Q_1}(f_rQ_2^r)\leq\nu_{Q_1}(f_iQ_2^i)\mbox{ for every }i,0\leq i\leq n.
\]
Then we have the following.
\begin{description}
\item[(i)] $\nu_{Q_1}(f)\leq\nu_{Q_2}(f)\leq\nu(f)$.
\item[(ii)]  $\nu_{Q_1}(f)=\nu_{Q_1}(f_rQ_2^r)=\nu(f_rQ_1^r)$ and $\delta_{Q_1}(f)=r$.
\item[(iii)] The coefficients of degree $\delta_{Q_1}(f)$ in the $Q_1$ and $Q_2$-expansions of $f$ have the same value.
\item[(iv)]  $\delta_{Q_2}(f)\leq\delta_{Q_1}(f)$.
\item[(v)]  The leading coefficients of the $Q_1$ and $Q_2$-expansions of $f$ have the same value.
\item[(vi)] If $\delta_{Q_1}(f)=0$, then
\[
\nu(f_iQ_2^i)-\nu(f_0)>i(\nu(Q_2)-\nu(Q_1))\geq\nu(Q_2)-\nu(Q_1),
\]
for every $i$, $1\leq i\leq n$.
\item[(vii)]  If $\delta_{Q_2}(f)>0$ and $\nu(Q_1)<\nu(Q_2)$, then $\nu_{Q_1}(f)<\nu_{Q_2}(f)$.
\end{description}
\end{Lema}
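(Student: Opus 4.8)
The backbone of the whole argument is a single computation, which I would record first. For every index $i$ with $f_i\neq 0$, repeated application of Lemma \ref{deltaprod} to the product $f_iQ_2^i$ --- using $\nu_{Q_1}(Q_2)=\nu(Q_1)$ and $\delta_{Q_1}(Q_2)=1$ established before the lemma, and $\nu_{Q_1}(f_i)=\nu(f_i)$, $\delta_{Q_1}(f_i)=0$ because $\deg f_i<\deg Q_1$ --- gives
\[
\nu_{Q_1}(f_iQ_2^i)=\nu(f_i)+i\nu(Q_1)=\nu(f_iQ_1^i)\quad\text{and}\quad\delta_{Q_1}(f_iQ_2^i)=i .
\]
Set $\beta:=\min_{0\le i\le n}\nu_{Q_1}(f_iQ_2^i)$, so that by definition $r$ is the largest index attaining $\beta$.

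To prove \textbf{(ii)} I would split the $Q_2$-expansion as $f=A+B$, where $A$ collects the terms $f_iQ_2^i$ with $\nu_{Q_1}(f_iQ_2^i)=\beta$ and $B$ the remaining ones, so that $\nu_{Q_1}(B)>\beta$ whenever $B\neq 0$. Listing the terms of $A$ in order of increasing index and applying Remark \ref{1card(SQ(f))=1} \textbf{(iii)} repeatedly --- the numbers $\delta_{Q_1}$ of those terms are strictly increasing, while they all share the common value $\beta$ --- yields $\nu_{Q_1}(A)=\beta$ and $\delta_{Q_1}(A)=r$. Then Remark \ref{1card(SQ(f))=1} \textbf{(ii)}, applied with $B$ in the role of the summand of larger value, gives $\nu_{Q_1}(f)=\beta$ and $S_{Q_1}(f)=S_{Q_1}(A)$, hence $\delta_{Q_1}(f)=r$; combined with the displayed identities at $i=r$ (which give $\nu_{Q_1}(f)=\nu_{Q_1}(f_rQ_2^r)=\nu(f_rQ_1^r)$), this is exactly \textbf{(ii)}. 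Now \textbf{(i)} is immediate: $\nu_{Q_1}(f_iQ_2^i)=\nu(f_i)+i\nu(Q_1)\le\nu(f_i)+i\nu(Q_2)=\nu(f_iQ_2^i)$ since $\nu(Q_1)\le\nu(Q_2)$, so $\nu_{Q_1}(f)=\beta\le\nu_{Q_2}(f)$, and $\nu_{Q_2}(f)\le\nu(f)$ holds for any truncation. For \textbf{(iii)}, writing $f=\sum_j c_jQ_1^j$ for the $Q_1$-expansion, the fact $r=\delta_{Q_1}(f)\in S_{Q_1}(f)$ gives $\nu(c_r)+r\nu(Q_1)=\nu_{Q_1}(f)=\nu(f_r)+r\nu(Q_1)$, so $\nu(c_r)=\nu(f_r)$.

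The remaining items are short deductions from \textbf{(ii)}--\textbf{(iii)}. For \textbf{(iv)}: if $i>r$ then $\nu_{Q_1}(f_iQ_2^i)>\beta=\nu_{Q_1}(f_rQ_2^r)$, that is $\nu(f_i)-\nu(f_r)>(r-i)\nu(Q_1)$; multiplying $\nu(Q_1)\le\nu(Q_2)$ by the negative integer $r-i$ reverses the inequality, so $\nu(f_i)-\nu(f_r)>(r-i)\nu(Q_1)\ge(r-i)\nu(Q_2)$, whence $\nu(f_iQ_2^i)>\nu(f_rQ_2^r)\ge\nu_{Q_2}(f)$ and $i\notin S_{Q_2}(f)$; thus $\delta_{Q_2}(f)\le r$. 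For \textbf{(v)} the key move is to apply the already-proved \textbf{(iii)} not to $f$ but to the single term $g:=f_nQ_2^n$: by Lemma \ref{deltaprod}, $\delta_{Q_1}(g)=\delta_{Q_1}(f_n)+n\,\delta_{Q_1}(Q_2)=n=\deg_{Q_1}(g)$, so \textbf{(iii)} for $g$ says that the leading coefficient of the $Q_1$-expansion of $g$ has the same value as its degree-$n$ coefficient in the $Q_2$-expansion, namely $f_n$; since $f-g=\sum_{i<n}f_iQ_2^i$ has $Q_1$-degree $<n$, the leading coefficient of the $Q_1$-expansion of $f$ coincides with that of $g$, and \textbf{(v)} follows. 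For \textbf{(vi)}, i.e.\ the case $r=0$: then $\nu(f_i)+i\nu(Q_1)=\nu_{Q_1}(f_iQ_2^i)>\nu(f_0)$ for $1\le i\le n$, and adding $i(\nu(Q_2)-\nu(Q_1))$ to both sides gives $\nu(f_iQ_2^i)-\nu(f_0)>i(\nu(Q_2)-\nu(Q_1))\ge\nu(Q_2)-\nu(Q_1)$. Finally, for \textbf{(vii)}, put $s:=\delta_{Q_2}(f)\ge 1$; then $\nu_{Q_1}(f)\le\nu_{Q_1}(f_sQ_2^s)=\nu(f_s)+s\nu(Q_1)<\nu(f_s)+s\nu(Q_2)=\nu(f_sQ_2^s)=\nu_{Q_2}(f)$, where the strict inequality uses $s\ge 1$ and $\nu(Q_1)<\nu(Q_2)$.

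The computational heart --- and essentially the only point requiring care --- is \textbf{(ii)}: not merely evaluating $\nu_{Q_1}(f)$, but pinning the index $\delta_{Q_1}(f)$ down to exactly $r$, which is what forces the iterated bookkeeping with Remark \ref{1card(SQ(f))=1} \textbf{(ii)}--\textbf{(iii)}. Once \textbf{(ii)} and \textbf{(iii)} are in hand the other five items are one- or two-line arguments, the only mildly clever step being the passage to $f_nQ_2^n$ in \textbf{(v)}.
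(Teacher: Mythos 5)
Your proposal is correct and follows essentially the same route as the paper: the base computation $\nu_{Q_1}(f_iQ_2^i)=\nu(f_iQ_1^i)$, $\delta_{Q_1}(f_iQ_2^i)=i$ via Lemma \ref{deltaprod}, the iterated use of Remark \ref{1card(SQ(f))=1} \textbf{(ii)}--\textbf{(iii)} to get \textbf{(ii)}, and the remaining items as short consequences, including the same trick of applying \textbf{(iii)} to the single term $f_nQ_2^n$ for \textbf{(v)}. The only differences are cosmetic (you deduce \textbf{(i)} from \textbf{(ii)} term by term rather than from the $Q_1$-expansion of $f$, and get $\delta_{Q_1}(f_nQ_2^n)=n$ directly from Lemma \ref{deltaprod} instead of via \textbf{(iv)}), and they do not change the substance.
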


\begin{proof}
For \textbf{(i)}, it is enough to write the $Q_1$-expansion of $f$ and use the fact that $\nu_{Q_2}(Q_1)=\nu(Q_1)$. 

Since $\delta_{Q_1}(Q_2)=1$, by Lemma \ref{deltaprod} we have that
\[
\delta_{Q_1}(f_iQ_2^i)=i\mbox{ for every }i, 0\leq i\leq n.
\]
Hence, Remark \ref{1card(SQ(f))=1} \textbf{(ii)} and \textbf{(iii)} applied (iteractively) to the sum $f_0+\ldots+f_nQ_2^n$ give us \textbf{(ii)}.

The item \textbf{(iii)} follows immediately from \textbf{(ii)}.

For every $i$, $r+1\leq i\leq n$, since $\nu_{Q_1}(f_rQ_2^r)<\nu_{Q_1}(f_iQ_2^i)$, we have $\nu(f_rQ_2^r)<\nu(f_iQ_2^i)$. Hence, by \textbf{(ii)} we have $\delta_{Q_2}(f)\leq r=\delta_{Q_1}(f)$. This shows \textbf{(iv)}.

For every $i$, $0\leq i\leq n-1$, the $Q_1$-expansion of $f_iQ_2^i$ has no term of degree $n$. Hence, in order to prove \textbf{(v)} it is enough to study the term of degree $n$ in the $Q_1$-expansion of $f_nQ_2^n$. By \textbf{(iv)}, since $\deg_{Q_1}(f_nQ_2^n)=n$, we have
\[
n=\delta_{Q_2}(f_nQ_2^n)=\delta_{Q_1}(f_nQ_2^n).
\]
Hence, \textbf{(v)} follows from \textbf{(iii)}.

Since $\delta_{Q_1}(f)=0$, \textbf{(ii)} gives us that $r=0$ and $\nu_{Q_1}(f_iQ_2^i)>\nu(f_0)$ for every $i$, $1\leq i\leq n$. Since
\[
\nu_{Q_1}\left(f_iQ_2^i\right)=\nu\left(f_iQ_2^i\right)-i(\nu(Q_2)-\nu(Q_1)),
\]
we have \textbf{(vi)}.

By \textbf{(ii)} and \textbf{(iv)} we have $0<\delta_{Q_2}(f)\leq\delta_{Q_1}(f)=r$. Since $\nu(Q_1)<\nu(Q_2)$ and $r>0$, the item \textbf{(vii)} follows from \textbf{(ii)}.
\end{proof}

\subsection{The relation between the derivatives and the $Q$-truncation}

For this section, let $Q\in K[x]$ be a key polynomial for $\nu$, $\epsilon=\epsilon(Q)$, $h\in K[x]\setminus\{0\}$ a polynomial of degree smaller than $\deg(Q)$ and $n\in\mathbb{N}$. We will study the relation between the partial derivatives $\partial_b$ and the $Q$-truncation of $\nu$.

For $b\in\mathbb{N}$, consider the set $\mathcal{S}_{b,n}$ of all tuples of the form $\gamma=(b_0,\ldots,b_r)$ where
\[
0\leq r\leq n, 0\leq b_0, 0<b_1\leq b_2\leq\ldots\leq b_r\mbox{ and }b_0+\ldots+b_r=b.
\]
We denote
\[
C(\gamma):=\frac{n!}{(n-r)!n_1!\cdots n_k!},
\]
if $\{b_1,\ldots,b_r\}$ has $k$ distinct elements $b_{i_1},\ldots, b_{i_k}$ and for each $j$, $1\leq j\leq k$, $n_j$ is the number of $b_i$'s which are equal to $b_{i_j}$. Observe that if $p\mid{n\choose r}$, then $p\mid C(\gamma)$.

By the Leibniz rule for derivation we have
\begin{equation}\label{2derivada}
\partial_b(hQ^n)=\sum_{\gamma\in \mathcal S_{b,n}}C(\gamma)T_{\gamma}(hQ^n),\mbox{ where }T_{\gamma}(hQ^n)=\partial_{b_0}(h)\left(\prod_{j=1}^r\partial_{b_i}(Q)\right)Q^{n-r}.
\end{equation}
For simplicity, we will denote $T_\gamma=T_\gamma(hQ^n)$. By Lemma \ref{deltaprod} we have $S_Q(T_\gamma)=\{n-r\}$. Consequently, by Remark \ref{1card(SQ(f))=1} \textbf{(i)} we have $\nu_Q(T_\gamma)=\nu(T_\gamma)$.

\begin{Lema}\label{2T1}
Let $b\in\mathbb{N}$ and $\gamma=(b_0,\ldots,b_r)\in\mathcal{S}_{b,n}$. We have
\begin{equation}\label{inequaliapeare}
\nu(T_{\gamma})\geq\nu(hQ^n)-b\epsilon.
\end{equation}
Moreover, the equality holds in \eqref{inequaliapeare} if and only if $b_0=0$ and $b_i\in I(Q)$ for every $i$, $1\leq i\leq r$.
\end{Lema}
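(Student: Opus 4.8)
The plan is to expand $\nu(T_\gamma)$ using the multiplicativity of $\nu$ on the factored form of $T_\gamma$, and then compare each factor against the bound coming from the level $\epsilon=\epsilon(Q)$ together with the inequality \eqref{UT}. First I would write
\[
\nu(T_\gamma)=\nu(\partial_{b_0}h)+\sum_{i=1}^r\nu(\partial_{b_i}Q)+(n-r)\nu(Q).
\]
For the factor $\nu(\partial_{b_0}h)$ I would invoke the hypothesis $\epsilon(h)<\epsilon(Q)=\epsilon$ (which holds since $\deg(h)<\deg(Q)$ and $Q$ is a key polynomial): the Remark following \eqref{KP} gives $\nu(\partial_{b_0}h)\geq\nu(h)-b_0\epsilon(h)\geq\nu(h)-b_0\epsilon$, with the last inequality strict whenever $b_0>0$ and an equality (namely $\nu(\partial_0 h)=\nu(h)$) when $b_0=0$. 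For each factor $\nu(\partial_{b_i}Q)$ with $b_i\geq 1$, inequality \eqref{UT} applied to $Q$ gives $\nu(\partial_{b_i}Q)\geq\nu(Q)-b_i\epsilon$, and by the definition of $I(Q)$ this is an equality precisely when $b_i\in I(Q)$.

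Adding these up and using $b_0+b_1+\cdots+b_r=b$, I get
\[
\nu(T_\gamma)\geq\bigl(\nu(h)-b_0\epsilon\bigr)+\sum_{i=1}^r\bigl(\nu(Q)-b_i\epsilon\bigr)+(n-r)\nu(Q)=\nu(h)+n\nu(Q)-b\epsilon=\nu(hQ^n)-b\epsilon,
\]
which is \eqref{inequaliapeare}. For the equality clause, the chain of inequalities above is a sum of finitely many inequalities, so the total is an equality if and only if each summand is an equality. The $b_0$-term is an equality iff $b_0=0$ (for $b_0>0$ the bound $\nu(\partial_{b_0}h)\geq\nu(h)-b_0\epsilon$ is strict because $\epsilon(h)<\epsilon$); each $b_i$-term with $i\geq 1$ is an equality iff $b_i\in I(Q)$. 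This gives exactly the stated characterization: $b_0=0$ and $b_i\in I(Q)$ for all $1\leq i\leq r$.

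The only subtlety — and the one place I would be careful — is to make sure the strictness for the $h$-factor when $b_0>0$ is genuinely available: this is where we really use that $Q$ is a \emph{key polynomial} and $\deg(h)<\deg(Q)$, forcing $\epsilon(h)<\epsilon(Q)$, so that $\nu(h)-b_0\epsilon(h)>\nu(h)-b_0\epsilon$. Once that is in place the argument is just bookkeeping: multiplicativity of $\nu$, the known description of $\nu_Q(T_\gamma)=\nu(T_\gamma)$ (already recorded before the lemma via $S_Q(T_\gamma)=\{n-r\}$ and Remark \ref{1card(SQ(f))=1}\textbf{(i)}, though it is not strictly needed here), and the elementary fact that a sum of $\geq$'s is an equality iff every term is. I do not expect a real obstacle; the main thing is to state the equality condition cleanly factor by factor.
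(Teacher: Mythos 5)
Your proof is correct and follows essentially the same route as the paper: write $\nu(T_\gamma)$ as the sum of the values of its factors, bound each factor by \eqref{UT}, add using $b_0+\ldots+b_r=b$, and read off the equality case term by term. Your explicit remark that strictness for $b_0>0$ comes from $\epsilon(h)<\epsilon(Q)$ (via $\deg(h)<\deg(Q)$ and $Q$ being a key polynomial) is exactly the point the paper uses implicitly in its equality clause, so there is no gap.
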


\begin{proof}
For every $i$, $1\leq i\leq r$, we have
\begin{equation}\label{2usaraqui2}
\nu(\partial_{b_0}h)\geq\nu(h)-b_0\epsilon
\mbox{ and }
\nu(\partial_{b_i}Q)\geq\nu(Q)-b_i\epsilon.
\end{equation}
Since $b_0+\ldots+b_r=b$, adding \eqref{2usaraqui2}, we have
\[
\nu(T_{\gamma})=\nu\left(\partial_{b_0}(h)\left(\prod_{j=1}^r\partial_{b_i}(Q)\right)Q^{n-r}\right)\geq\nu(hQ^n)-b\epsilon.
\]
Since all the inequalities in \eqref{2usaraqui2} are equalities if and only if $b_0=0$ and $b_i\in I(Q)$ for every $i$, $1\leq i\leq r$, the result follows.
\end{proof}

\begin{Cor}\label{2vq>=v-be}
For every $b\in\mathbb{N}_0$ we have:
\begin{description}
\item[(i)] $\nu_Q(\partial_b(hQ^n))\geq\nu(hQ^n)-b\epsilon$; and
\item[(ii)] if $f\in K[x]$, then $\nu_{Q}(\partial_bf)\geq\nu_Q(f)-b\epsilon$.
\end{description}
\end{Cor}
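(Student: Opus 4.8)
The plan is to deduce both items from Lemma \ref{2T1} together with the expansion \eqref{2derivada}, using nothing more than the definition of the $Q$-truncation as a minimum over monomials and the basic valuation inequality (V2).

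For part \textbf{(i)}: write out \eqref{2derivada}, namely $\partial_b(hQ^n)=\sum_{\gamma\in\mathcal{S}_{b,n}}C(\gamma)T_\gamma$. Since $\nu_Q$ is a valuation (Proposition 2.6 of \cite{SopivNova}), it satisfies (V2), so $\nu_Q(\partial_b(hQ^n))\geq\min_{\gamma}\nu_Q(C(\gamma)T_\gamma)\geq\min_\gamma \nu_Q(T_\gamma)$. As observed right after \eqref{2derivada}, each $T_\gamma$ has $S_Q(T_\gamma)$ a singleton, so $\nu_Q(T_\gamma)=\nu(T_\gamma)$ by Remark \ref{1card(SQ(f))=1} \textbf{(i)}. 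Now apply Lemma \ref{2T1}: $\nu(T_\gamma)\geq\nu(hQ^n)-b\epsilon$ for every $\gamma$. Combining these gives $\nu_Q(\partial_b(hQ^n))\geq\nu(hQ^n)-b\epsilon$, which is \textbf{(i)}.

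For part \textbf{(ii)}: let $f=\sum_{n}h_nQ^n$ be the $Q$-expansion of $f$, where each $h_n$ is $0$ or has degree less than $\deg(Q)$. By linearity of the Hasse derivative, $\partial_b f=\sum_n\partial_b(h_nQ^n)$, and again by (V2) for $\nu_Q$ we get $\nu_Q(\partial_b f)\geq\min_n\nu_Q(\partial_b(h_nQ^n))$. By part \textbf{(i)} applied to each nonzero $h_n$, this is at least $\min_n\big(\nu(h_nQ^n)-b\epsilon\big)=\min_n\nu(h_nQ^n)-b\epsilon=\nu_Q(f)-b\epsilon$, which is exactly \textbf{(ii)}.

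I do not expect a genuine obstacle here: the statement is a formal consequence of Lemma \ref{2T1} and the fact that $\nu_Q$ is a valuation. The only point demanding a little care is that the coefficients $C(\gamma)$ are integers, so $\nu_Q(C(\gamma)T_\gamma)\geq\nu_Q(T_\gamma)$ (an integer multiple never lowers the value), which is what lets us discard the combinatorial coefficients; and in part \textbf{(ii)} one should note the $Q$-expansion of $\partial_b(h_nQ^n)$ need not be in reduced form, but this is irrelevant since $\nu_Q$ is already defined as a valuation and the bound from \textbf{(i)} holds regardless.
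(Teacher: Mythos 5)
Your proof is correct and follows essentially the same route as the paper: part \textbf{(i)} is the Leibniz expansion \eqref{2derivada} combined with Lemma \ref{2T1} and the observation that $\nu_Q(T_\gamma)=\nu(T_\gamma)$, and part \textbf{(ii)} follows by applying \textbf{(i)} termwise to the $Q$-expansion of $f$. The only difference is cosmetic: since \textbf{(i)} concerns $hQ^n$ with $n\in\mathbb{N}$, the paper handles the constant term $f_0$ separately, noting $\nu_Q(\partial_b f_0)\geq\nu(f_0)-b\epsilon$ because $\deg(f_0)<\deg(Q)$ and $Q$ is a key polynomial (so $\epsilon(f_0)<\epsilon$); your blanket application of \textbf{(i)} to the degree-zero term is harmless but strictly needs this one-line remark.
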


\begin{proof}
Since
\[
\partial_b(hQ^n)=\sum_{\gamma\in \mathcal S_{b,n}}C(\gamma)T_{\gamma},
\]
Lemma \ref{2T1} gives us that $\nu_Q(\partial_b(hQ^n))\geq\nu(hQ^n)-b\epsilon$ and \textbf{(i)} follows.

In order to prove \textbf{(ii)}, let $f=f_mQ^m+\ldots+f_0$ be the $Q$-expansion of $f$. Since $\deg(f_0)<\deg(Q)$ and $Q$ is a key polynomial for $\nu$ we have $\nu_Q(\partial_bf_0)\geq\nu(f_0)-b\epsilon$. By \textbf{(i)}, we have that
\[
\nu_Q(\partial_b(f_iQ^i))\geq\nu(f_iQ^i)-b\epsilon\mbox{ for every }i,1\leq i\leq m.
\]
Since
\[
\partial_b(f)=\partial_b(f_mQ^m)+\ldots+\partial_b(f_0)
\]
we obtain \textbf{(ii)}.
\end{proof}

\begin{Lema}\label{5importante}
Let $b_M=\max I(Q)$ and $b\in\mathbb{N}$ such that $b_M|b$. If
\[
\nu_Q(\partial_b(hQ^n))=\nu(hQ^n)-b\epsilon,
\]
then
\begin{equation}\label{equalimalucaqun}
\delta_Q(\partial_b(hQ^n))\leq n-b/b_M.
\end{equation}
Moreover, the equality holds in \eqref{equalimalucaqun} if and only if $\displaystyle p\nmid{n\choose b/b_M}$.
\end{Lema}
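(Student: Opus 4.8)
The plan is to write $\partial_b(hQ^n)=\sum_{\gamma\in\mathcal{S}_{b,n}}C(\gamma)T_\gamma$ as in \eqref{2derivada} and to determine which $\gamma$ contribute to $\nu_Q$ of this sum and, among those, which have maximal $\delta_Q$. By Lemma \ref{2T1}, the hypothesis $\nu_Q(\partial_b(hQ^n))=\nu(hQ^n)-b\epsilon$ means that the minimum $\nu(hQ^n)-b\epsilon$ is attained precisely on the set $\mathcal{S}^0_{b,n}$ of tuples $\gamma=(b_0,\dots,b_r)$ with $b_0=0$ and $b_i\in I(Q)$ for all $i$. For such $\gamma$ we have $S_Q(T_\gamma)=\{n-r\}$, so $\delta_Q(\partial_b(hQ^n))$ is governed by the largest $n-r$ occurring — equivalently the smallest $r$ — among those $\gamma\in\mathcal{S}^0_{b,n}$ whose coefficient $C(\gamma)$ is nonzero in $K$ and which survive after summing over all $\gamma$ with the same value of $n-r$.

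First I would observe that for $\gamma\in\mathcal{S}^0_{b,n}$ we have $b=b_1+\dots+b_r$ with each $b_i\in I(Q)$ and $b_i\le b_M=\max I(Q)$, hence $r\ge b/b_M$; this immediately gives $n-r\le n-b/b_M$, which is the inequality \eqref{equalimalucaqun} once we know, via Remark \ref{1card(SQ(f))=1}(iii) applied iteratively, that $\delta_Q$ of a sum of terms is at most the maximal $\delta_Q$ among them. (Here I use Corollary \ref{2vq>=v-be} and Remark \ref{1card(SQ(f))=1} to control which summands matter: only the $T_\gamma$ with $\gamma\in\mathcal{S}^0_{b,n}$ realize the value $\nu_Q(\partial_b(hQ^n))$, so only those can contribute to $\delta_Q$.) Next, equality $r=b/b_M$ forces $b_1=\dots=b_r=b_M$, i.e. $\gamma$ is the single tuple $\gamma_0=(0,b_M,\dots,b_M)$ with $b/b_M$ copies of $b_M$; for this $\gamma_0$ one computes directly from the definition that $C(\gamma_0)=\binom{n}{b/b_M}$ (the multinomial collapses since all the $b_i$ are equal, giving $n!/((n-r)!\,r!)$). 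Therefore the term $T_{\gamma_0}$, which has $S_Q(T_{\gamma_0})=\{n-b/b_M\}$, appears in $\partial_b(hQ^n)$ with coefficient $\binom{n}{b/b_M}$, and no other $\gamma\in\mathcal{S}^0_{b,n}$ has $\delta_Q(T_\gamma)=n-b/b_M$.

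Then I would conclude: by Remark \ref{1card(SQ(f))=1}, grouping the sum as $C(\gamma_0)T_{\gamma_0}$ plus the remaining terms (all of which have either strictly larger $\nu_Q$, namely those outside $\mathcal{S}^0_{b,n}$, or the same $\nu_Q$ but strictly smaller $\delta_Q$, namely the $\gamma\in\mathcal{S}^0_{b,n}$ with $r>b/b_M$), the coefficient of $Q^{\,n-b/b_M}$ in the $Q$-expansion of $\partial_b(hQ^n)$ is $\binom{n}{b/b_M}$ times a polynomial of the expected value, plus contributions of strictly larger value. Hence $\delta_Q(\partial_b(hQ^n))=n-b/b_M$ exactly when $\binom{n}{b/b_M}\ne 0$ in $K$, i.e. when $p\nmid\binom{n}{b/b_M}$; if $p\mid\binom{n}{b/b_M}$ then that top term vanishes and $\delta_Q(\partial_b(hQ^n))<n-b/b_M$.

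The main obstacle I anticipate is the bookkeeping needed to justify that among the $\gamma\in\mathcal{S}^0_{b,n}$ with $r=b/b_M$ there is genuinely only one tuple (so there is no cancellation at the top degree coming from several distinct $\gamma$'s), and to verify cleanly that $C(\gamma_0)=\binom{n}{b/b_M}$ from the stated formula for $C(\gamma)$ — the constraint $b_M\mid b$ in the hypothesis is exactly what makes $r=b/b_M$ an integer and forces the uniqueness. A secondary subtlety is making the iterated application of Remark \ref{1card(SQ(f))=1}(ii)–(iii) to the possibly long sum $\sum_\gamma C(\gamma)T_\gamma$ rigorous, since we must separate the terms into those with larger $\nu_Q$ and those with equal $\nu_Q$ but smaller $\delta_Q$ and then invoke additivity of $\delta_Q$ for the surviving top term.
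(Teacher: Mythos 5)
Your decomposition and bookkeeping follow exactly the paper's own proof (which is a compressed version of what you wrote): by Lemma \ref{2T1} the terms attaining the minimal value $\nu(hQ^n)-b\epsilon$ are precisely the $T_\gamma$ with $b_0=0$ and all $b_i\in I(Q)$; such a tuple has $r\ge b/b_M$, equality forces $b_1=\dots=b_r=b_M$, so the minimal-$r$ tuple $\lambda$ is unique and $C(\lambda)=\binom{n}{b/b_M}$; the inequality and the equality criterion then follow by grouping the sum and using Remark \ref{1card(SQ(f))=1}. The two points you flag as delicate (uniqueness of the top tuple, and the iterated use of the remark) are handled exactly as you describe, so on the combinatorial side your argument is complete and, if anything, more detailed than the paper's.

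There is, however, one step that is genuinely wrong as written: the identification ``$\binom{n}{b/b_M}\neq 0$ in $K$, i.e.\ $p\nmid\binom{n}{b/b_M}$'', together with the claim that when $p\mid\binom{n}{b/b_M}$ the top term \emph{vanishes}. Here $p$ is the characteristic exponent of the residue field $K\nu$, not of $K$; the paper expressly allows $\mathrm{char}(K)=0$ with $\mathrm{char}(K\nu)=p>0$ (elsewhere $\nu(p)$ is used as a finite positive value). In that mixed-characteristic case $\binom{n}{b/b_M}$ is never zero in $K$, so your criterion would yield $\delta_Q(\partial_b(hQ^n))=n-b/b_M$ unconditionally, which is false. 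The correct argument is about values, not vanishing: since $\mathrm{char}(K\nu)=p$, an integer $m$ satisfies $\nu(m)=0$ if and only if $p\nmid m$, and $\nu(m)\geq\nu(p)>0$ (possibly $\infty$) otherwise. Thus if $p\nmid\binom{n}{b/b_M}$, the component of $C(\lambda)T_\lambda$ of degree $n-b/b_M$ has exactly the value $\nu(hQ^n)-b\epsilon$ and cannot be cancelled by the strictly larger-valued contributions of the other terms, so $n-b/b_M\in S_Q(\partial_b(hQ^n))$ and equality holds in \eqref{equalimalucaqun}; whereas if $p\mid\binom{n}{b/b_M}$, then $\nu(C(\lambda))>0$, every contribution in degrees $\geq n-b/b_M$ has value strictly above $\nu(hQ^n)-b\epsilon=\nu_Q(\partial_b(hQ^n))$, and since $\delta_Q$ is defined through $S_Q$ (the indices where the minimum is attained), we get $\delta_Q(\partial_b(hQ^n))<n-b/b_M$ even though the coefficient of $Q^{n-b/b_M}$ need not vanish. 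With this replacement your proof coincides with the paper's.
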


\begin{proof}
By Lemma \ref{2T1}, the tuple $\lambda\in\mathcal{S}_{b,n}$ with the least number of coordinates such that $\nu(T_\lambda)=\nu(hQ^n)-b\epsilon$ is $\lambda=(0,b_M,\ldots,b_M)$ with $b_M$ appearing $b/b_M$ times. Since, by Lemma \ref{2T1}, $\nu(T_\gamma)\geq\nu(hQ^n)-b\epsilon$ for every $\gamma\in\mathcal{S}_{b,n}$, we have
\[
\delta_Q(\partial_b(hQ^n))\leq\{n-b/b_M\}.
\]
Since $\displaystyle C(\lambda)={n\choose b/b_M}$ and
\[
\partial_b(hQ^n)=\sum_{\gamma\in \mathcal S_{b,n}\setminus\{\lambda\}}C(\gamma)T_{\gamma}+C(\lambda)T_\lambda,
\]
we have $\delta_Q(\partial_b(hQ^n))=n-b/b_M$ if and only if $\displaystyle p\nmid{n\choose b/b_M}$. Therefore, the result follows.
\end{proof}

\begin{Prop}\label{2dp}
Let $b_M=\max I(Q)$ and $f\in K[x]\setminus\{0\}$ such that $\delta:=\delta_Q(f)>0$. Write $\delta=p^eu$, where $e\in\mathbb{N}_0$, $u\in\mathbb{N}$ and $p\nmid u$. Set $b=p^eb_M$. We have
\[
\nu_Q(\partial_bf)=\nu_Q(f)-b\epsilon\mbox{ and }\delta_Q(\partial_bf)=\delta-p^e.
\]
\end{Prop}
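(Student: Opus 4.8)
The plan is to work with the $Q$-expansion $f=f_mQ^m+\ldots+f_0$ of $f$, where $m=\deg_Q(f)$ and $\delta=\delta_Q(f)$ is the largest index realizing the minimum $\nu_Q(f)=\nu(f_\delta Q^\delta)$. Applying $\partial_b$ term by term, $\partial_bf=\sum_{i=0}^m\partial_b(f_iQ^i)$, and I would analyze each summand using the tools of the previous subsection. By Corollary \ref{2vq>=v-be}\textbf{(i)} every term satisfies $\nu_Q(\partial_b(f_iQ^i))\geq\nu(f_iQ^i)-b\epsilon\geq\nu_Q(f)-b\epsilon$, which already gives the inequality $\nu_Q(\partial_bf)\geq\nu_Q(f)-b\epsilon$; the content of the proposition is that this is an equality and that the $\delta_Q$-value drops by exactly $p^e$.

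The key step is to pin down, among the terms $\partial_b(f_iQ^i)$, which ones both attain the minimal value $\nu_Q(f)-b\epsilon$ \emph{and} contribute a monomial of the largest possible $Q$-degree. Since $b=p^eb_M$ with $b_M=\max I(Q)$, we have $b_M\mid b$, so Lemma \ref{5importante} applies to each $f_iQ^i$ with $i$ in the support $S_Q(f)$: the term $\partial_b(f_iQ^i)$ attains value $\nu(f_iQ^i)-b\epsilon$ exactly when it does, and in that case its $Q$-degree is $\leq i-b/b_M=i-p^e$, with equality iff $p\nmid\binom{i}{p^e}$. Now I would invoke Kummer's theorem (or Lucas' theorem): writing $\delta=p^eu$ with $p\nmid u$, the binomial coefficient $\binom{\delta}{p^e}$ is \emph{not} divisible by $p$ (the $p$-adic carries vanish since the base-$p$ digit of $\delta$ in position $e$ is the unit digit of $u$, which is nonzero, and subtracting $p^e$ produces no carry). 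Hence the term $i=\delta$ contributes a monomial of $Q$-degree exactly $\delta-p^e$ at value $\nu(f_\delta Q^\delta)-b\epsilon=\nu_Q(f)-b\epsilon$. For indices $i\in S_Q(f)$ with $i<\delta$, Lemma \ref{5importante} bounds the $Q$-degree of their contribution at value $\nu_Q(f)-b\epsilon$ by $i-p^e<\delta-p^e$ (when that contribution is even present); and for $i\notin S_Q(f)$, $\nu(f_iQ^i)-b\epsilon>\nu_Q(f)-b\epsilon$ so those terms are irrelevant to the minimum. Therefore in the $Q$-expansion of $\partial_bf$, the coefficient of $Q^{\delta-p^e}$ has value exactly $\nu_Q(f)-b\epsilon$ and no higher-degree coefficient has value that low, which gives simultaneously $\nu_Q(\partial_bf)=\nu_Q(f)-b\epsilon$ and $\delta_Q(\partial_bf)=\delta-p^e$.

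The step I expect to be the main obstacle is controlling the interference between the different summands $\partial_b(f_iQ^i)$ at the critical $Q$-degree $\delta-p^e$: a priori, several indices $i\geq\delta$ could produce monomials of $Q$-degree $\delta-p^e$ at the minimal value, and one must be sure these cannot cancel the contribution coming from $i=\delta$. The resolution is that for $i>\delta$ one has $\nu(f_iQ^i)\geq\nu_Q(f)$ but the monomial of $Q$-degree $i-p^e>\delta-p^e$ is strictly higher, so by the maximality of $\delta$ in $S_Q(f)$ and Remark \ref{1card(SQ(f))=1}\textbf{(iii)} these terms shift $\delta_Q$ \emph{upward} rather than interfering at degree $\delta-p^e$, and a careful bookkeeping with Lemma \ref{2T1} (which identifies the unique minimal tuple $\lambda=(0,b_M,\ldots,b_M)$ responsible for the extremal term) shows the coefficient at $Q^{\delta-p^e}$ from $i=\delta$ is genuinely nonzero modulo the maximal ideal of the relevant graded piece. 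I would phrase this last part by isolating, inside $\partial_bf$, the polynomial $C(\lambda)\,\partial_{b_M}(Q)^{p^e}f_\delta Q^{\delta-p^e}$ coming from the $i=\delta$ summand and showing every other contribution to the value $\nu_Q(f)-b\epsilon$ has strictly smaller $\delta_Q$, so Remark \ref{1card(SQ(f))=1}\textbf{(ii)}--\textbf{(iii)} finish the argument.
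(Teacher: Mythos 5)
Your proposal is correct and follows essentially the same route as the paper: differentiate the $Q$-expansion term by term, use Lemma \ref{5importante} (equivalently the unique minimal tuple of Lemma \ref{2T1}) together with $p\nmid\binom{\delta}{p^e}$ to get from the summand $i=\delta$ a contribution of value exactly $\nu_Q(f)-b\epsilon$ at degree $\delta-p^e$, bound the degrees of the minimal-value contributions from $i<\delta$ by $i-p^e<\delta-p^e$, discard $i\notin S_Q(f)$ by strict value inequality, and assemble via Remark \ref{1card(SQ(f))=1}. The only slip is in your closing paragraph: for $i>\delta$ the maximality of $\delta$ gives the strict inequality $\nu(f_iQ^i)>\nu_Q(f)$, so these summands lie strictly above the minimum and by Remark \ref{1card(SQ(f))=1} \textbf{(ii)} leave $\nu_Q$ and $S_Q$ of the sum untouched (rather than ``shifting $\delta_Q$ upward''), which is exactly how the paper (and your own earlier, correct sentence) disposes of them.
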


\begin{proof}
Let $f=f_0+\ldots+f_mQ^m$ be the $Q$-expansion of $f$. We have
\[
\partial_b(f)=\partial_bf_0+\ldots+\partial_b(f_mQ^m).
\]
Observe that $p^e=b/b_Q$. Fix $i$, $0\leq i\leq m$.

If $i=\delta$, then by Lemma \ref{5importante} and the fact that $\displaystyle p\nmid{\delta\choose p^e}$ we obtain
\[
\nu(\partial_b(f_\delta Q^\delta))=\nu(f_\delta Q^\delta)-b\epsilon\mbox{ and }\delta_Q(\partial_b(f_\delta Q^\delta))=\delta-p^e.
\]

If $i=0$, then by the fact that $\deg(f_0)<\deg(Q)$, $Q$ is a key polynomial for $\nu$ and $b>0$, we have
\[
\nu_Q(\partial_bf_0)>\nu(f_0)-b\epsilon\geq\nu(f_\delta Q^\delta)-b\epsilon=\nu(\partial_b(f_\delta Q^\delta)).
\]
The second inequality holds because $\delta=\delta_Q(f)$.

If $0<i<\delta$, then we have
\[
\nu_Q(\partial_b(f_iQ^i))\geq\nu(f_iQ^i)-b\epsilon\geq\nu(f_\delta Q^\delta)-b\epsilon=\nu(\partial_b(f_\delta Q^\delta)).
\]
By Lemma \ref{5importante}, if the first inequality is an equality, then
\[
\delta_Q(\partial_b(f_iQ^i))\leq i-p^e<\delta-p^e.
\]

For $i>\delta$, we have
\[
\nu_Q(\partial_b(f_iQ^i))\geq\nu(f_iQ^i)-b\epsilon>\nu(f_\delta Q^\delta)-b\epsilon=\nu(\partial_b(f_\delta Q^\delta))
\]
where the second equality holds because $\delta=\delta_Q(f)$.

Using the cases $0<i<\delta$ and $i=\delta$ we obtain, using Remark \ref{1card(SQ(f))=1} \textbf{(iii)}, that
\[
\nu_Q(\partial_b(f_1Q+\ldots+f_\delta Q^\delta))=\nu_Q(\partial_b(f_\delta Q^\delta))\mbox{ and }\delta_Q(\partial_b(f_\delta Q^\delta))=\delta-p^e.
\]
Hence, by the cases $i=0$ and $i>\delta$, by Remark \ref{1card(SQ(f))=1} \textbf{(ii)}, we obtain that
\[
\nu_Q(\partial_bf)=\nu_Q(\partial_b(f_\delta Q^\delta))=\nu_Q(f)-b\epsilon\mbox{ and }\delta_Q(\partial_b f)=\delta-p^e.
\]
The result now follows.
\end{proof}

\subsection{The set of key polynomials of the same degree}

Let $\Psi_\alpha$ be the set of the key polynomials for $\nu$ of degree $\alpha$. We say that $\Psi\subseteq\Psi_\alpha$ is a \textbf{final subset} of $\Psi_\alpha$ if there exists $Q\in\Psi$ such that, for every $Q'\in\Psi_\alpha$ with $\nu(Q')\geq\nu(Q)$, we have $Q'\in\Psi$.

For this section, we will assume that $\Psi_\alpha$ is non-empty and $\nu(\Psi_\alpha)$ does not have a maximum and is bounded. Since $\nu$ is a rank one valuation, we can set
\[
B:=\sup\nu(\Psi_\alpha)\mbox{ and }\epsilon(B):=\sup\epsilon(\Psi_\alpha).
\]

\begin{Prop}\label{bfinal}
There exists $b_\infty\in\mathbb{N}$ and a final subset $\Psi\subseteq\Psi_\alpha$, such that for every $Q\in\Psi$ we have $I(Q)=\{b_\infty\}$.
\end{Prop}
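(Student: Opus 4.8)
The plan is to analyze how $I(Q)$ behaves as $Q$ runs through $\Psi_\alpha$ with $\nu(Q)$ increasing. First I would fix some $Q_1\in\Psi_\alpha$ and, for any $Q_2\in\Psi_\alpha$ with $\nu(Q_2)\geq\nu(Q_1)$, invoke the setup of the subsection on key polynomials of the same degree: writing $Q_2=Q_1+h$ we have $\delta_{Q_1}(Q_2)=1$ and $\epsilon(h)<\epsilon(Q_1)$. When $\nu(Q_2)>\nu(Q_1)$, Lemma \ref{2comparacao} \textbf{(iii)} gives $\epsilon(Q_1)<\epsilon(Q_2)$, and part \textbf{(i)} shows $\nu(\partial_b Q_2)=\nu(\partial_b Q_1)$ for every $b\in I(Q_1)$. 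The key extraction is from part \textbf{(iv)}: if $b_1\in I(Q_1)$, $b_2\in I(Q_2)$ and additionally $\nu(\partial_{b_1}Q_2)=\nu(\partial_{b_1}Q_1)$ and $\nu(\partial_{b_2}Q_2)=\nu(\partial_{b_2}Q_1)$, then $b_2\le b_1$. Taking $b_1=\max I(Q_1)$ (so the first hypothesis is automatic by \textbf{(i)}), this says $\max I(Q_2)\le\max I(Q_1)$ \emph{provided} we also know $\nu(\partial_{b_2}Q_2)=\nu(\partial_{b_2}Q_1)$ for $b_2=\max I(Q_2)$.

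Next I would argue that the quantity $\max I(Q)$ is eventually constant along $\Psi_\alpha$. The natural move: show $b\mapsto \max I(Q)$, viewed on the cofinal (in value) part of $\Psi_\alpha$, is non-increasing in the appropriate sense, hence stabilizes since it takes values in $\N$. Concretely, I expect one shows: there is $Q^\ast\in\Psi_\alpha$ such that for all $Q'\in\Psi_\alpha$ with $\nu(Q')\ge\nu(Q^\ast)$ one has $\max I(Q')=\max I(Q^\ast)=:b_\infty$, and moreover for such $Q'$ the values $\nu(\partial_b Q')$ for $b\le b_\infty$ agree with those computed from $Q^\ast$ via the $h$-correction argument of Lemma \ref{2comparacao}\textbf{(i)}. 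The set $\Psi:=\{Q'\in\Psi_\alpha\mid\nu(Q')\ge\nu(Q^\ast)\}$ is a final subset by definition. It then remains to promote ``$\max I(Q')=b_\infty$'' to ``$I(Q')=\{b_\infty\}$'', i.e. to rule out that $I(Q')$ contains some $b<b_\infty$. For this I would run the same comparison between two large key polynomials $Q',Q''\in\Psi$ with $\nu(Q')<\nu(Q'')$: applying Lemma \ref{2comparacao}\textbf{(i)} to a hypothetical $b\in I(Q')$ forces $\nu(\partial_b Q'')=\nu(\partial_b Q')$, and then part \textbf{(iv)} with $b_1=b\in I(Q')$, $b_2=b_\infty=\max I(Q'')\in I(Q'')$ gives $b_\infty\le b<b_\infty$ — a contradiction — so no such $b$ exists and $I(Q')=\{b_\infty\}$.

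The main obstacle is establishing that $\max I(Q)$ actually stabilizes rather than merely being bounded: Lemma \ref{2comparacao}\textbf{(iv)} only delivers a comparison $b_2\le b_1$ under the extra hypothesis $\nu(\partial_{b_2}Q_2)=\nu(\partial_{b_2}Q_1)$, which is not automatic for $b_2=\max I(Q_2)$. One needs a way to guarantee this equality for all sufficiently large $Q_2$ — presumably by showing that for a fixed $Q_1$ and all $b\le\max I(Q_1)$, the function $Q\mapsto\nu(\partial_b Q)$ is eventually constant on $\Psi_\alpha$ (because $\epsilon(B)<\infty$, the correction terms $\nu(\partial_b h)$ eventually exceed $\nu(\partial_b Q_1)$), and then using that each $\nu(\partial_b Q)$ lies in a discrete-enough set, or directly that $\max I(\cdot)$ cannot strictly decrease infinitely often. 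I would structure the argument around a minimal choice: pick $Q^\ast\in\Psi_\alpha$ realizing the least value of $\max I(Q)$ over a final subset (such a choice exists since $\max I$ is $\N$-valued and any infinite strictly decreasing chain in $\N$ is impossible), and then check that beyond $Q^\ast$ the value of $\max I$ can neither increase (by \textbf{(iv)}) nor decrease (by minimality), giving the final subset $\Psi$ and the constant $b_\infty$.
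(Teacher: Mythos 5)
Your overall route is the one the paper itself follows (its proof is exactly a citation of Lemma \ref{2comparacao} \textbf{(i)} and \textbf{(iv)} plus the fact that $I(Q)\subseteq\{1,\dots,\alpha\}$), and your last step --- ruling out $b<b_\infty$ in $I(Q')$ by applying \textbf{(i)} to both $b$ and $b_\infty\in I(Q')$ and then \textbf{(iv)} --- is correct as a conditional argument. The genuine gap is the step you yourself label ``the main obstacle'': you never prove the comparison that makes $\max I(\cdot)$ eventually non-increasing, and without it the minimal-choice argument does not close. Minimality of $\max I(Q^\ast)$ only prevents $\max I(Q)$ from dropping below $b_\infty$; to prevent it from \emph{exceeding} $b_\infty$ you must apply \textbf{(iv)} with $b_2=\max I(Q)$, and that requires $\nu(\partial_{b_2}Q)=\nu(\partial_{b_2}Q^\ast)$, which \textbf{(i)} supplies only when $b_2\in I(Q^\ast)$ --- precisely what is in question. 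Your proposed repairs do not work as stated: eventual constancy of $Q\mapsto\nu(\partial_bQ)$ is only available for $b\in I(Q_1)$ (that \emph{is} \textbf{(i)}); for other $b$ the correction term $\partial_b h$ is only bounded below by (roughly) $\nu(Q_1)-b\epsilon(Q_1)$, a fixed bound that does not grow as $\nu(Q)\to B$, so no constancy follows; and ``$\nu(\partial_bQ)$ lies in a discrete-enough set'' is unjustified, since a rank-one value group may be densely ordered.

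The gap is fixable, and the fix is what the citation of \textbf{(i)} and \textbf{(iv)} is really encoding: if $\nu(Q_1)<\nu(Q_2)$, then $b_2\le b_1$ for \emph{every} $b_1\in I(Q_1)$ and \emph{every} $b_2\in I(Q_2)$, with no extra hypothesis. Indeed, writing $Q_2=Q_1+h$ with $\nu(h)\ge\nu(Q_1)$ and $\epsilon(h)<\epsilon(Q_1)$, one has
\[
\nu(\partial_{b_2}Q_2)\ \ge\ \min\{\nu(\partial_{b_2}Q_1),\nu(\partial_{b_2}h)\}\ \ge\ \nu(Q_1)-b_2\epsilon(Q_1),
\]
so $b_2\in I(Q_2)$ gives $b_2\bigl(\epsilon(Q_2)-\epsilon(Q_1)\bigr)\le\nu(Q_2)-\nu(Q_1)$; on the other hand \textbf{(i)} applied to $b_1\in I(Q_1)$ gives $\nu(Q_1)-b_1\epsilon(Q_1)=\nu(\partial_{b_1}Q_2)\ge\nu(Q_2)-b_1\epsilon(Q_2)$, i.e. $b_1\bigl(\epsilon(Q_2)-\epsilon(Q_1)\bigr)\ge\nu(Q_2)-\nu(Q_1)$. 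Since $\nu(Q_2)>\nu(Q_1)$ and $\epsilon(Q_2)>\epsilon(Q_1)$ (Lemma \ref{2comparacao} \textbf{(iii)}), this forces $b_2\le b_1$. Hence $\max I(Q_2)\le\min I(Q_1)$ whenever $\nu(Q_1)<\nu(Q_2)$; choosing $Q^\ast$ with $\min I(Q^\ast)$ minimal (possible since $I(Q)\subseteq\{1,\dots,\alpha\}$) yields $I(Q)=\{b_\infty\}$ with $b_\infty:=\min I(Q^\ast)$ for all $Q$ with $\nu(Q)>\nu(Q^\ast)$, and this set is final because $\nu(\Psi_\alpha)$ has no maximum; your separate singleton-promotion step then becomes unnecessary.
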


\begin{proof}
Since $\nu(\Psi_\alpha)$ does not have a maximum,  the result follows from Lemma \ref{2comparacao} \textbf{(i)} and \textbf{(iv)}.
\end{proof}

For the remaining of this section, let $b_\infty$ as in Proposition \ref{bfinal}, $Q_0,Q_1\in\Psi_\alpha$ such that $\epsilon(Q_1)-\epsilon(Q_0)>B-\nu(Q_1)$ and $\Psi\subset\Psi_\alpha$ a final subset of $\Psi_\alpha$ of all $Q\in\Psi_\alpha$ such that $\nu(Q)\geq\nu(Q_1)$ and $I(Q)=\{b_\infty\}$.

If $b\in\mathbb{N}$ and $h\in K[x]$ is such that $\deg(h)<\alpha$ and $\nu(h)\geq\nu(Q_1)$, then
\[
\nu(\partial_bh)\geq\nu(h)-b\epsilon(h)>\nu(Q_1)-b\epsilon(Q_0)>B-b\epsilon(Q_1)>B-b\epsilon(B).
\]

\begin{Lema}\label{B-bepsilonB}
For every $Q\in\Psi$ and $b\in\mathbb{N}$, we have $\nu(\partial_bQ)\geq B-b\epsilon(B)$.
\end{Lema}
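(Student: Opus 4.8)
The plan is to argue by a dichotomy on the size of $\nu(\partial_bQ)$, the only delicate case being $\nu(\partial_bQ)<B-b\epsilon(B)$, which I will rule out. So fix $Q\in\Psi$ and $b\in\N$. If $\nu(\partial_bQ)\geq B-b\epsilon(B)$ there is nothing to prove, so I will assume $c:=\nu(\partial_bQ)<B-b\epsilon(B)$ and derive a contradiction.

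The first step is to show that $\nu(\partial_bQ')=c$ for \emph{every} $Q'\in\Psi$ with $\nu(Q')\geq\nu(Q)$. Fix such a $Q'$; we may assume $Q'\ne Q$. Write the $Q$-expansion $Q'=Q+h$; since $Q'$ and $Q$ are both monic of degree $\alpha$ the leading terms cancel, so $\deg(h)<\alpha$ and $h\ne 0$. Moreover $\nu(h)\geq\min\{\nu(Q'),\nu(Q)\}=\nu(Q)\geq\nu(Q_1)$ (recall that every element of $\Psi$ has value at least $\nu(Q_1)$). Hence the displayed inequality immediately preceding the lemma, applied to $h$, gives $\nu(\partial_bh)>B-b\epsilon(B)>c=\nu(\partial_bQ)$. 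Since $\partial_bQ'=\partial_bQ+\partial_bh$ and the two summands have distinct values, \textbf{(V2)} forces $\nu(\partial_bQ')=\nu(\partial_bQ)=c$. Thus $Q'\mapsto\nu(\partial_bQ')$ is constant, equal to $c$, on $\{Q'\in\Psi\mid\nu(Q')\geq\nu(Q)\}$.

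The second step plays the two suprema against each other. Since $c<B-b\epsilon(B)$ we have $c+b\epsilon(B)<B=\sup\nu(\Psi_\alpha)$. Because $\nu(\Psi_\alpha)$ has no maximum and $\Psi$ is a final subset of $\Psi_\alpha$ (so it contains every element of $\Psi_\alpha$ of sufficiently large value), I can pick $Q'\in\Psi$ with $\nu(Q')>\max\{c+b\epsilon(B),\nu(Q)\}$. By the first step $\nu(\partial_bQ')=c$, so using $b$ as the index in the maximum defining the level,
\[
\epsilon(Q')\geq\frac{\nu(Q')-\nu(\partial_bQ')}{b}=\frac{\nu(Q')-c}{b}>\epsilon(B).
\]
This contradicts $\epsilon(B)=\sup\epsilon(\Psi_\alpha)$ together with $Q'\in\Psi_\alpha$, which finishes the argument.

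The proof is short once the setup is in place, so there is no serious obstacle; the point that requires care is to isolate the case $\nu(\partial_bQ)<B-b\epsilon(B)$, observe that in that case $\nu(\partial_bQ')$ is forced to be constant on a final subset of $\Psi_\alpha$, and then exploit that $\nu(\Psi_\alpha)$ approaches $B$ while $\epsilon(\Psi_\alpha)$ stays bounded above by $\epsilon(B)$. A minor technical caveat is that the auxiliary polynomial $Q'$ must be chosen inside the final subset $\Psi$, not merely in $\Psi_\alpha$, which is precisely what finality of $\Psi$ provides.
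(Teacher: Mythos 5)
Your proof is correct and uses the same mechanism as the paper: the displayed inequality before the lemma forces $\nu(\partial_b\,\cdot\,)$ to be constant along a final segment whenever it falls below $B-b\epsilon(B)$, and this constancy is then played against $\nu(Q')\to B$ and $\epsilon(Q')\leq\epsilon(B)$. The only difference is organizational — you argue by contradiction with the expansion anchored at $Q$ and a single well-chosen witness $Q'$, while the paper anchors all expansions at $Q_1$ and concludes directly by letting $\nu(Q)$ tend to $B$ — so this is essentially the paper's proof.
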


\begin{proof}
Let $b\in\mathbb{N}$. For each $Q\in\Psi$, let $Q=Q_1+h_Q$ be the $Q_1$-expansion of $Q$. Since $\nu(h_Q)\geq\nu(Q_1)$, we have $\nu(h_Q)>B-b\epsilon(B)$. If $\nu(\partial_bQ_1)>B-b\epsilon(B)$, then the result follows from the fact that $\partial_bQ=\partial_bQ_1+\partial_bh_Q$. If $\nu(\partial_bQ_1)\leq B-b\epsilon(B)$, then we have
\[
\nu(\partial_bQ)=\min\left\{\nu(\partial_bQ_1),\nu(\partial_bh_Q)\right\}=\nu(\partial_bQ_1),\mbox{ for every }Q\in\Psi.
\]
Since $\nu(\partial_bQ_1)=\nu(\partial_bQ)\geq\nu(Q)-b\epsilon(Q)$, taking the limit in $Q$, we have
\[
\nu(\partial_bQ)=\nu(\partial_bQ_1)\geq B-b\epsilon(B)\mbox{ for every }Q\in\Psi.
\]
The result follows.
\end{proof}

\begin{Lema}\label{Lemafinal}
Let $l,n,m\in\mathbb{N}_0$ with $0<m<n$ and $a\in K[x]$ with $\deg(a)<\alpha$ such that
\[
\nu(a)\geq lB, Q\in\Psi, b=mb_\infty, \gamma=(b_0,\ldots,b_r)\in\mathcal{S}_{b,n}\mbox{ and }T_\gamma=T_\gamma(aQ^n).
\]
Then
\[
\nu(T_\gamma)\geq(l+r-m)B+(m+n-r)\nu(Q)-b\epsilon(Q).
\]
\end{Lema}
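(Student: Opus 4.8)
The plan is to estimate $\nu(T_\gamma)$ factor by factor; we may assume $a\neq 0$, otherwise $T_\gamma=0$ and there is nothing to prove. Writing out $T_\gamma=\partial_{b_0}(a)\bigl(\prod_{i=1}^r\partial_{b_i}(Q)\bigr)Q^{n-r}$ and using that $\nu$ is a valuation gives
\[
\nu(T_\gamma)=\nu(\partial_{b_0}a)+\sum_{i=1}^r\nu(\partial_{b_i}Q)+(n-r)\nu(Q),
\]
so it suffices to bound each summand from below.

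For the first summand: if $b_0=0$ then $\partial_{b_0}a=a$ and $\nu(\partial_{b_0}a)=\nu(a)\geq lB$; if $b_0>0$ and $\partial_{b_0}a\neq 0$, then $a$ is nonconstant of degree $<\alpha=\deg(Q)$, so $\epsilon(a)<\epsilon(Q)$ because $Q$ is a key polynomial, and \eqref{UT} together with $\nu(a)\geq lB$ gives $\nu(\partial_{b_0}a)\geq\nu(a)-b_0\epsilon(a)>lB-b_0\epsilon(Q)$. In every case $\nu(\partial_{b_0}a)\geq lB-b_0\epsilon(Q)$. For each remaining summand I would apply Lemma \ref{B-bepsilonB}, which (since $Q\in\Psi$) yields $\nu(\partial_{b_i}Q)\geq B-b_i\epsilon(B)$ for every $i$.

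The heart of the argument is to convert the $\epsilon(B)$'s back into $\epsilon(Q)$'s by means of the identity
\[
b_\infty\bigl(\epsilon(B)-\epsilon(Q)\bigr)=B-\nu(Q)\qquad\text{for all }Q\in\Psi .
\]
I would prove it as follows. Since $Q\in\Psi$ we have $I(Q)=\{b_\infty\}$, hence equality holds in \eqref{UT} at $b=b_\infty$, that is, $b_\infty\epsilon(Q)=\nu(Q)-\nu(\partial_{b_\infty}Q)$. By Lemma \ref{2comparacao}\,\textbf{(i)} the value $\nu(\partial_{b_\infty}Q)$ does not depend on $Q\in\Psi$, so $b_\infty\epsilon(Q)-\nu(Q)$ is constant on $\Psi$; and since $\Psi$ is a final, hence cofinal, subset of $\Psi_\alpha$, we have $\sup_{Q\in\Psi}\nu(Q)=B$ and $\sup_{Q\in\Psi}\epsilon(Q)=\epsilon(B)$, so taking the supremum over $Q\in\Psi$ identifies that constant with $b_\infty\epsilon(B)-B$.

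Finally I would substitute $\epsilon(B)=\epsilon(Q)+\tfrac{1}{b_\infty}\bigl(B-\nu(Q)\bigr)$ in the bounds from Lemma \ref{B-bepsilonB}, use $\sum_{i=1}^r b_i=b-b_0$, and add the three estimates together; the only error term that survives beyond $(l+r)B+(n-r)\nu(Q)-b\epsilon(Q)$ is $\tfrac{b-b_0}{b_\infty}\bigl(B-\nu(Q)\bigr)$, which is at most $m\bigl(B-\nu(Q)\bigr)$ since $b=mb_\infty$, $b_0\geq 0$ and $B-\nu(Q)>0$ (the latter because $\nu(\Psi_\alpha)$ has no maximum), so the estimate collapses to $\nu(T_\gamma)\geq(l+r-m)B+(m+n-r)\nu(Q)-b\epsilon(Q)$. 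I expect the main obstacle to be recognizing that the naive term-by-term bound — using only $\nu(\partial_{b_i}Q)\geq\max\{\nu(Q)-b_i\epsilon(Q),\ B-b_i\epsilon(B)\}$ — is not enough when $r>m$, and that it is precisely the sharp relation between $\epsilon(B)$, $\epsilon(Q)$ and $b_\infty$ displayed above that closes the gap; deriving that relation from Lemma \ref{2comparacao}\,\textbf{(i)} and the cofinality of $\Psi$ is the one step that is not purely mechanical.
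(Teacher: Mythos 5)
Your proposal is correct and follows essentially the same route as the paper: bound $\nu(\partial_{b_0}a)$ and, via Lemma \ref{B-bepsilonB}, each $\nu(\partial_{b_i}Q)$, then add and trade $\epsilon(B)$ for $\epsilon(Q)$ using $b_\infty(\epsilon(B)-\epsilon(Q))=B-\nu(Q)$, which is exactly the identity the paper invokes (without proof) in the form $-b(\epsilon(B)-\epsilon(Q))=m(-B+\nu(Q))$. Your derivation of that identity from Lemma \ref{2comparacao}\,\textbf{(i)}, the definition of $\Psi$ and the cofinality of $\Psi$ in $\Psi_\alpha$ is a correct justification of the step the paper leaves implicit; the only other deviation (bounding the $\partial_{b_0}a$ term by $lB-b_0\epsilon(Q)$ rather than $lB-b_0\epsilon(B)$, and hence finishing with an inequality $\tfrac{b-b_0}{b_\infty}(B-\nu(Q))\leq m(B-\nu(Q))$ instead of an exact cancellation) is harmless.
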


\begin{proof}
Since $\nu(a)\geq lB$ we have
\[
\nu(\partial_{b_0}a)\geq lB-b_0(\epsilon(B)-\epsilon(Q))-b_0\epsilon(Q).
\]
By Lemma \ref{B-bepsilonB} we have
\[
\nu(\partial_{b_i}Q)\geq B-b_i(\epsilon(B)-\epsilon(Q))-b_i\epsilon(Q), \mbox{ for every }i, 1\leq i\leq r.
\]
Adding these inequalities, taking into account that
\[
-b(\epsilon(B)-\epsilon(Q))=m(-B+\nu(Q)),
\]
we obtain the result.
\end{proof}

\section{Successor key polynomial}

For this section, let $\Psi_\alpha$ be the set of all key polynomials for $\nu$ of degree $\alpha$. Assume that $\Psi_\alpha$ is not empty. We say that $Q\in\Psi_\alpha$ is a \textbf{maximum element} of $\Psi_\alpha$ if for every $Q'\in\Psi_\alpha$ we have $\nu(Q')\leq\nu(Q)$.

\begin{Prop}\label{4naofixados}
If $F\in K[x]$ is monic of the smallest degree among polynomials in $S_\alpha$,
then $F$ is a key polynomial for $\nu$ of degree greater than $\alpha$. Moreover, if $g\in K[x]\setminus\{0\}$ has degree smaller than $\deg(F)$, then there exists $Q\in\Psi_\alpha$ such that $\epsilon(g)\leq\epsilon(Q)$.
\end{Prop}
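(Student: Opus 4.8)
The plan is to reduce everything to two statements about the value $\epsilon$: first, that $\epsilon(Q)<\epsilon(F)$ for every $Q\in\Psi_\alpha$; and second, that if $Q\in\Psi_\alpha$ and $\epsilon(Q)<\epsilon(g)$ for some $g\in K[x]\setminus\{0\}$, then $\nu_Q(g)<\nu(g)$. The bound $\deg(F)\geq\alpha$ is immediate: if $\deg(f)<\alpha$, then for any $Q\in\Psi_\alpha$ (and $\Psi_\alpha\neq\emptyset$ by assumption) the $Q$-expansion of $f$ is trivial, so $\nu_Q(f)=\nu(f)$ and $f\notin S_\alpha$; since $F\in S_\alpha$, this gives $\deg(F)\geq\alpha\geq 1$, so in particular $F\notin K$.

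The technical core is the first statement. Fix $Q\in\Psi_\alpha$ and set $\epsilon:=\epsilon(Q)$. Since $F\in S_\alpha$ we have $\nu_Q(F)<\nu(F)$, so $S_Q(F)$ is not a singleton by Remark \ref{1card(SQ(f))=1}\textbf{(i)}, and thus $\delta:=\delta_Q(F)\geq 1$. I would rerun the argument of Proposition \ref{2dp} with the derivative order $b^{\ast}:=\delta\,b_M$ in place of $p^{e}b_M$, where $b_M:=\max I(Q)$. Writing the $Q$-expansion $F=F_dQ^d+\ldots+F_0$ and expanding each $\partial_{b^{\ast}}(F_iQ^i)$ via \eqref{2derivada}, one checks, using Lemmas \ref{2T1} and \ref{5importante} together with Remark \ref{1card(SQ(f))=1}, that the only tuple $\gamma\in\mathcal{S}_{b^{\ast},\delta}$ realizing the minimal value $\nu(F_\delta Q^\delta)-b^{\ast}\epsilon$ in \eqref{inequaliapeare} is $\lambda=(0,b_M,\ldots,b_M)$ with $b_M$ repeated $\delta$ times (a minimizing tuple must have $b_0=0$ and its $r\leq\delta$ further entries in $I(Q)$, hence each $\leq b_M$, summing to $\delta b_M$, which forces $r=\delta$ and all of them equal to $b_M$); for this tuple $C(\lambda)=1$ and $S_Q(T_\lambda)=\{0\}$. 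On the other hand, the summand $\partial_{b^{\ast}}(F_iQ^i)$ for each $i\neq\delta$ has $\nu_Q$-value $>\nu_Q(F)-b^{\ast}\epsilon$: for $i>\delta$ because $\nu(F_iQ^i)>\nu_Q(F)$; for $0<i<\delta$ because no admissible tuple can have $r\leq i<\delta$ entries of $I(Q)$ summing to $\delta b_M$; and for $i=0$ because $\epsilon(F_0)<\epsilon$ when $\deg(F_0)<\alpha$. Hence $\nu_Q(\partial_{b^{\ast}}F)=\nu_Q(F)-b^{\ast}\epsilon$ and $S_Q(\partial_{b^{\ast}}F)=\{0\}$, so by Remark \ref{1card(SQ(f))=1}\textbf{(i)} we get $\nu(\partial_{b^{\ast}}F)=\nu_Q(\partial_{b^{\ast}}F)=\nu_Q(F)-b^{\ast}\epsilon$, and therefore
\[
\epsilon(F)\;\geq\;\frac{\nu(F)-\nu(\partial_{b^{\ast}}F)}{b^{\ast}}\;=\;\frac{\nu(F)-\nu_Q(F)}{b^{\ast}}+\epsilon(Q)\;>\;\epsilon(Q),
\]
as $\nu(F)>\nu_Q(F)$. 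I expect this adaptation of Proposition \ref{2dp} — verifying that $\lambda$ is the unique minimizing tuple and that $p\nmid C(\lambda)$ — to be the only real obstacle.

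The second statement is elementary: suppose $Q\in\Psi_\alpha$, $\epsilon(Q)<\epsilon(g)$ and $\nu_Q(g)=\nu(g)$; then Corollary \ref{2vq>=v-be}\textbf{(ii)} gives, for every $b\in\mathbb{N}$,
\[
\nu(\partial_bg)\geq\nu_Q(\partial_bg)\geq\nu_Q(g)-b\epsilon(Q)=\nu(g)-b\epsilon(Q),
\]
whence $\epsilon(g)\leq\epsilon(Q)$, a contradiction; so $\nu_Q(g)<\nu(g)$. Using this I prove the ``moreover'' clause by contraposition: if some $g\neq 0$ had $\deg(g)<\deg(F)$ and $\epsilon(g)>\epsilon(Q)$ for all $Q\in\Psi_\alpha$, then $\nu_Q(g)<\nu(g)$ for all such $Q$, i.e. $g\in S_\alpha$, contradicting the minimality of $\deg(F)$. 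Hence there is $Q\in\Psi_\alpha$ with $\epsilon(g)\leq\epsilon(Q)$.

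Finally, for any nonzero $g$ with $\deg(g)<\deg(F)$, choose $Q\in\Psi_\alpha$ with $\epsilon(g)\leq\epsilon(Q)$ by the ``moreover'' clause and combine with the first statement: $\epsilon(g)\leq\epsilon(Q)<\epsilon(F)$. Since $F$ is monic and nonconstant, it follows that $F$ is a key polynomial for $\nu$. If we had $\deg(F)=\alpha$, then $F$ would be a key polynomial of degree $\alpha$, i.e. $F\in\Psi_\alpha$; but the $F$-expansion of $F$ is $F=1\cdot F$, so $\nu_F(F)=\nu(F)$, contradicting $F\in S_\alpha$. Therefore $\deg(F)>\alpha$, which finishes the proof.
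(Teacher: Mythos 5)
Your proposal is correct, but its technical core runs along a genuinely different track from the paper's. The paper proves $\epsilon(Q)<\epsilon(F)$ by invoking Proposition \ref{2dp} with $b=p^{e}b_M$ (where $\delta_Q(F)=p^{e}u$, $p\nmid u$), which only gives $\nu_Q(\partial_b F)=\nu_Q(F)-b\epsilon(Q)$; to upgrade $\nu_Q(\partial_bF)$ to $\nu(\partial_bF)$ it uses the minimality of $\deg(F)$ in $S_\alpha$ together with Lemma \ref{2sujestao} \textbf{(i)} to find a final subset $\Psi\subseteq\Psi_\alpha$ on which $\nu_Q(\partial_bF)=\nu(\partial_bF)$ for all $b$, so the inequality $\epsilon(Q)<\epsilon(F)$ is obtained only for $Q\in\Psi$ (which suffices). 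You instead take the derivative order $b^{*}=\delta b_M$, for which the unique tuple in $\mathcal{S}_{b^{*},\delta}$ attaining the bound of Lemma \ref{2T1} is $\lambda=(0,b_M,\ldots,b_M)$ with $C(\lambda)=1$ and $S_Q(T_\lambda)=\{0\}$; your uniqueness check ($r\leq\delta$ entries of $I(Q)$, each at most $b_M$, summing to $\delta b_M$ forces $r=\delta$ and all entries equal to $b_M$) and your treatment of the cases $i=0$, $0<i<\delta$, $i>\delta$ are sound, so $S_Q(\partial_{b^{*}}F)$ is a singleton and $\nu(\partial_{b^{*}}F)=\nu_Q(\partial_{b^{*}}F)=\nu_Q(F)-b^{*}\epsilon(Q)$ follows from Remark \ref{1card(SQ(f))=1} \textbf{(i)} with no stabilization step, yielding $\epsilon(Q)<\epsilon(F)$ for \emph{every} $Q\in\Psi_\alpha$, not just on a final subset. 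For the ``moreover'' clause both arguments rest on the minimality of $\deg(F)$, but you argue by contraposition through the elementary implication that $\epsilon(Q)<\epsilon(g)$ forces $\nu_Q(g)<\nu(g)$ (Corollary \ref{2vq>=v-be} \textbf{(ii)} plus $\nu_Q\leq\nu$), while the paper chooses $Q$ for which $\nu_Q(g)=\nu(g)$ and $\nu_Q(\partial_bg)=\nu(\partial_bg)$; your version dispenses with the derivative stabilization. You also make explicit why $\deg(F)>\alpha$ (if $\deg(F)=\alpha$ then $F\in\Psi_\alpha$ and $\nu_F(F)=\nu(F)$ contradicts $F\in S_\alpha$), which the paper leaves implicit. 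What the paper's route buys is the reuse of Proposition \ref{2dp} as a black box; what yours buys is a self-contained, characteristic-free choice of $b^{*}$ (no divisibility condition on a binomial coefficient to track) and the stronger per-$Q$ conclusion.
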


\begin{proof}
By assumption and Lemma \ref{2sujestao} \textbf{(i)}, there exists a final subset $\Psi$ of $\Psi_\alpha$ such that
\[
\nu_Q(\partial_bF)=\nu(\partial_bF),\mbox{ for every }b\in\mathbb{N}\mbox{ and }Q\in\Psi.
\]
For every $Q\in\Psi_\alpha$, since $\nu_Q(F)<\nu(F)$, by Remark \ref{1card(SQ(f))=1} \textbf{(i)}, we have $S_Q(F)\neq\{0\}$. Hence, by Proposition \ref{2dp}, for every $Q\in\Psi$ there exists $b\in\mathbb{N}$ such that
\[
\nu(\partial_bF)=\nu_Q(\partial_bF)=\nu_Q(F)-b\epsilon(Q)<\nu(F)-b\epsilon(Q).
\]
This implies that $\epsilon(F)>\epsilon(Q)$. It remains to show that if $g\in K[x]\setminus\{0\}$ has degree smaller than $\deg(F)$, then $\epsilon(g)\leq\epsilon(Q)$ for some $Q\in\Psi_\alpha$. Since $\deg(g)<\deg(F)$, there exists $Q\in\Psi_\alpha$ such that $\nu_Q(g)=\nu(g)$ and $\nu_Q(\partial_bg)=\nu(\partial_bg)$ for every $b\in\mathbb{N}$. Hence, by Corollary \ref{2vq>=v-be} \textbf{(ii)}, for every $b\in\mathbb{N}$ we have
\[
\nu(\partial_bg)=\nu_Q(\partial_bg)\geq\nu_Q(g)-b\epsilon(Q)=\nu(g)-b\epsilon(Q).
\]
This implies that $\epsilon(g)\leq\epsilon(Q)<\epsilon(F)$ and the result follows.
\end{proof}

\begin{Def}
A key polynomial $F\in K[x]$ for $\nu$ as in Proposition \ref{4naofixados} is a \textbf{successor key polynomial of $\Psi_\alpha$}. If such $F$ exists, then it follows from Proposition 3.1 of \cite{Nov} and the fact that $F$ is a key polynomial that $\nu(\Psi_\alpha)$ is bounded (by $\alpha\epsilon(F)$). If $\Psi_\alpha$ does not have maximum, then $F$ is a \textbf{limit key polynomial for $\Psi_\alpha$}. By Proposition \ref{4naofixados}, $F$ is a key polynomial for $\nu$ of degree larger than $\alpha$.
\end{Def}

Suppose that $F\in K[x]\setminus\{0\}$ is a successor of $\Psi_\alpha$ and set $B=\sup\nu(\Psi_\alpha)$ (such $\sup$ exists because $\nu(\Psi_\alpha)$ is bounded and $\nu$ is a rank one valuation).

\begin{Lema}\label{4removedordemonomios}
Assume that $f\in S_\alpha$ and take $h\in K[x]$ such that there exists $Q'\in\Psi_\alpha$ with $\nu_Q(f)<\nu_{Q'}(h)$, for every $Q\in\Psi_\alpha$. Then $f+h\in S_\alpha$.
\end{Lema}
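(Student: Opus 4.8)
The plan is to verify directly that $\nu_Q(f+h)<\nu(f+h)$ for every $Q\in\Psi_\alpha$. Two ingredients will be used: that each truncation $\nu_{\widetilde Q}$ is a valuation on $K[x]$ (hence ultrametric, and insensitive to the larger term when two values differ), and the monotonicity coming from Lemma \ref{2sujestao} \textbf{(i)}: if $Q_1,Q_2\in\Psi_\alpha$ with $\nu(Q_1)\leq\nu(Q_2)$, then $\nu_{Q_1}(g)\leq\nu_{Q_2}(g)\leq\nu(g)$ for every $g\in K[x]\setminus\{0\}$. I will also use that $\nu(\Psi_\alpha)$ is totally ordered (it sits inside the totally ordered group $\Gamma$), so that among $Q$ and $Q'$ one has maximal value.

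First I would fix an arbitrary $Q\in\Psi_\alpha$ and pick $\widetilde Q\in\{Q,Q'\}$ with $\nu(\widetilde Q)=\max\{\nu(Q),\nu(Q')\}$. Applying the hypothesis with $\widetilde Q$ in place of $Q$ gives $\nu_{\widetilde Q}(f)<\nu_{Q'}(h)$; since $\nu(Q')\leq\nu(\widetilde Q)$, Lemma \ref{2sujestao} \textbf{(i)} gives $\nu_{Q'}(h)\leq\nu_{\widetilde Q}(h)$. Thus $\nu_{\widetilde Q}(f)<\nu_{\widetilde Q}(h)$, and since $\nu_{\widetilde Q}$ is a valuation, $\nu_{\widetilde Q}(f+h)=\nu_{\widetilde Q}(f)$.

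Next I would bound $\nu_{\widetilde Q}(f)$ against $\nu(f+h)$. Since $f\in S_\alpha$, we have $\nu_{\widetilde Q}(f)<\nu(f)$; and $\nu_{\widetilde Q}(f)<\nu_{Q'}(h)\leq\nu(h)$, the last inequality because a truncation never exceeds the valuation. Hence $\nu_{\widetilde Q}(f)<\min\{\nu(f),\nu(h)\}\leq\nu(f+h)$, so that $\nu_{\widetilde Q}(f+h)=\nu_{\widetilde Q}(f)<\nu(f+h)$. Finally, since $\nu(Q)\leq\nu(\widetilde Q)$, Lemma \ref{2sujestao} \textbf{(i)} applied to $f+h$ yields $\nu_Q(f+h)\leq\nu_{\widetilde Q}(f+h)<\nu(f+h)$. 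As $Q\in\Psi_\alpha$ was arbitrary, this gives $f+h\in S_\alpha$.

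This argument is essentially routine and I do not anticipate a real obstacle. The only points requiring care are invoking Lemma \ref{2sujestao} \textbf{(i)} in the correct direction in the two places it is used (passing from $\nu_{Q'}(h)$ up to $\nu_{\widetilde Q}(h)$, and from $\nu_{\widetilde Q}(f+h)$ back down to $\nu_Q(f+h)$), and observing that replacing $Q$ by the larger of $Q$ and $Q'$ is precisely what makes both of these steps available simultaneously.
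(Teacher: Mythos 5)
Your proof is correct and follows essentially the same route as the paper's: compare $\nu_{Q}(f)<\nu_{Q'}(h)\leq\nu_{Q}(h)$ for key polynomials of value at least $\nu(Q')$, use the ultrametric property of the truncation to get $\nu_Q(f+h)=\nu_Q(f)$, and bound this by $\min\{\nu(f),\nu(h)\}\leq\nu(f+h)$. The only difference is that you spell out, via Lemma \ref{2sujestao} \textbf{(i)}, the reduction for those $Q\in\Psi_\alpha$ with $\nu(Q)<\nu(Q')$, a step the paper leaves implicit.
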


\begin{proof}
For every $Q\in\Psi_\alpha$ such that $\nu(Q)\geq \nu(Q')$ we have $\nu_Q(f+h)=\nu_Q(f)<\nu(f)$. Also, by Lemma \ref{2sujestao} \textbf{(i)}
\[
\nu_Q(f)<\nu_{Q'}(h)\leq\nu_Q(h)\leq\nu(h).
\]
Hence, $\nu_Q(f+h)<\nu(f+h)$ which shows the result.
\end{proof}

\begin{Prop}\label{delta=deg}
For every $Q\in\Psi_\alpha$ we have $\delta_Q(F)=\deg_Q(F)$.
\end{Prop}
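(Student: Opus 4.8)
The plan is to argue by contradiction: suppose that for some $Q\in\Psi_\alpha$ we have $\delta:=\delta_Q(F)<d:=\deg_Q(F)$, and extract from this a polynomial of degree smaller than $\deg(F)$ lying in $S_\alpha$, contradicting the minimality of $\deg(F)$. Write the $Q$-expansion $F=a_dQ^d+\ldots+a_1Q+a_0$. The point of the hypothesis $\delta<d$ is that the leading monomial $a_dQ^d$ is strictly \emph{not} of minimal $\nu_Q$-value, so $\nu_Q(a_dQ^d)>\nu_Q(F)$; the idea is that removing it (or a suitable tail of high-degree terms) should not change the defining inequality $\nu_{Q'}(\cdot)<\nu(\cdot)$ for $Q'$ large in $\Psi_\alpha$, but does decrease the $Q$-degree, which will eventually push us below $\deg F$.

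The key technical step is to pass to a \emph{sufficiently large} $Q'\in\Psi_\alpha$. First I would invoke the setup of Section 2.4: since $F$ is a successor key polynomial, $\nu(\Psi_\alpha)$ is bounded with supremum $B$, and one has the tools of Lemma \ref{2sujestao}. Apply Lemma \ref{2sujestao} with $Q_1=Q$ and $Q_2=Q'$ where $\nu(Q')>\nu(Q)$: by part \textbf{(iv)}, $\delta_{Q'}(F)\leq\delta_Q(F)=\delta$, while $\deg_{Q'}(F)=\deg_{Q}(F)=d$ is unchanged (the $Q'$-expansion still has top degree $d$ because $\deg Q'=\deg Q=\alpha$ and $F$ is unchanged). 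Now split the $Q'$-expansion $F=F_1+F_2$ where $F_2$ collects the terms $a_i'Q'^i$ with $i>\delta_{Q'}(F)$ and $F_1$ the rest; then $\delta_{Q'}(F_1)=\delta_{Q'}(F)$, $\nu_{Q'}(F_1)=\nu_{Q'}(F)<\nu(F)$, and each monomial of $F_2$ has $\nu_{Q'}$-value strictly larger than $\nu_{Q'}(F)=\nu_{Q'}(F_1)$. The crucial claim is that, for $Q'$ ranging over a final subset, $\nu_{Q}(F_1)$ (equivalently $\nu_Q$ of its monomials, in any $Q$-expansion) stays bounded away from the growing values $\nu_{Q'}(a_i'Q'^i)$ of the removed terms — here one uses that $\nu(a_i'Q'^i)=\nu_{Q'}(a_i'Q'^i)$ grows with $\nu(Q')$ while $\nu_{Q}(F_1)\le \nu_Q(F)<\nu(F)$ is fixed, together with Lemma \ref{2sujestao} \textbf{(i)} comparing $\nu_{Q'}$ and $\nu_Q$. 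This is exactly the hypothesis needed to apply Lemma \ref{4removedordemonomios}: $F_1=F-F_2\in S_\alpha$.

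But $\deg_{Q'}(F_1)=\delta_{Q'}(F)\le\delta<d=\deg_{Q'}(F)$, so $\deg(F_1)<\deg(F)$, contradicting that $F$ has least degree in $S_\alpha$. Hence $\delta_Q(F)=\deg_Q(F)$ for every $Q\in\Psi_\alpha$. The main obstacle I anticipate is making the boundedness comparison in the previous paragraph precise and uniform in $Q'$: one must check that the value $\nu_{Q'}(F_2)$ of the removed high-degree part genuinely exceeds $\nu_{Q}(F_1)$ for all $Q\in\Psi_\alpha$ simultaneously (not just for the particular large $Q'$), since Lemma \ref{4removedordemonomios} requires the inequality $\nu_Q(f)<\nu_{Q'}(h)$ to hold for \emph{every} $Q\in\Psi_\alpha$. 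Handling the small-value $Q$'s (those with $\nu(Q)$ below $\nu(Q')$) will likely need Lemma \ref{2sujestao} \textbf{(i)} again, bounding $\nu_Q(F_1)\le\nu_Q(F)\le\nu_{Q'}(F)$, and then choosing the removed terms to be precisely those whose $\nu_{Q'}$-value exceeds this common bound.
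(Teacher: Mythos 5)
Your overall strategy (argue by contradiction, delete high-degree terms of an expansion with respect to a large $Q'\in\Psi_\alpha$ via Lemma \ref{4removedordemonomios}, and contradict minimality of $\deg F$) is the same as the paper's, but the crucial quantitative step is missing and, as stated, your version of it is not justified. Lemma \ref{4removedordemonomios} requires $\nu_Q(F)<\nu_{Q'}(F_2)$ for \emph{every} $Q\in\Psi_\alpha$, in particular for $Q$ with $\nu(Q)>\nu(Q')$, where $\nu_Q(F)\geq \nu_{Q'}(F)$ and may be close to $\sup_Q\nu_Q(F)$. For the terms of the $Q'$-expansion of degree $i$ with $\delta_{Q'}(F)<i<\deg_{Q'}(F)$ you only know $\nu(a_i'Q'^i)>\nu_{Q'}(F)$, and your claim that these values ``grow with $\nu(Q')$'' has no basis: the coefficients $a_i'$ change with $Q'$, and Lemma \ref{2sujestao} gives stability of coefficient values only for the leading coefficient (item \textbf{(v)}) and for the coefficient at level $\delta_{Q_1}(F)$ (item \textbf{(iii)}). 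In fact, the later results of the paper (Proposition \ref{Lambda}) show that certain intermediate terms of $p$-power degree keep value below $\delta B$, i.e., below the supremum of the truncated values, so the plan ``remove everything of degree above $\delta_{Q'}(F)$'' cannot work in general; and your fallback (remove only the terms whose value exceeds the common bound) loses the degree drop unless you prove the \emph{leading} term is among them, which is exactly the point that still needs an argument. Bounding $\nu_Q(F)$ by $\nu(F)$ does not help either, since there is no reason for the removed terms to have value exceeding $\nu(F)$ (which may even be infinite).

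The paper's proof supplies precisely this missing ingredient and removes only the leading monomial. Fix $Q_0$ with $\nu(Q_0)\geq 0$ and suppose $\delta:=\delta_{Q_0}(F)<d:=\deg_{Q_0}(F)$; let $V_\delta$, $V_d$ be the values of the coefficients of degrees $\delta$ and $d$ in the $Q_0$-expansion. By Lemma \ref{2sujestao} \textbf{(iii)} the degree-$\delta$ coefficient has the same value $V_\delta$ in every later expansion, giving the uniform bound $\nu_Q(F)\leq V_\delta+\delta B$ for all $Q$, and since $V_\delta+\delta\nu(Q_0)<V_d+d\nu(Q_0)$, $\delta<d$ and $\nu(Q_0)\geq 0$, one gets $V_\delta+\delta B<V_d+dB$. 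By Lemma \ref{2sujestao} \textbf{(v)} the leading coefficient keeps the value $V_d$ in the $Q_1$-expansion for any $Q_1$, so choosing $Q_1$ with $V_d+d\nu(Q_1)>V_\delta+\delta B$ makes the single monomial $f_dQ_1^d$ removable by Lemma \ref{4removedordemonomios}, yielding the degree contradiction; the case of arbitrary $Q$ then follows from Lemma \ref{2sujestao} \textbf{(iv)}. To repair your write-up you would need to add this comparison of the stable values $V_\delta$ and $V_d$ (or an equivalent uniform bound on $\sup_Q\nu_Q(F)$) and restrict the removal to the leading term.
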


\begin{proof}
Let $Q_0\in\Psi_\alpha$ such that $\nu(Q_0)\geq 0$ and suppose, aiming for a contradiction, that $\delta:=\delta_{Q_0}(F)<\deg_{Q_0}(F)=:d$. Let $V_\delta$ and $V_d$ be the values of the coefficients of degree $\delta$ and $d$ in the $Q_0$-expansion of $F$, respectively. By Lemma \ref{2sujestao} \textbf{(iii)} we have
\[
\nu_Q(F)\leq V_\delta+\delta B<V_d+dB,
\]
where the second inequality holds because
\[
V_\delta+\delta\nu(Q_0)<V_d+d\nu(Q_0), \nu(Q_0)\geq 0\mbox{ and }\delta<d.
\]
Hence, there exists $Q_1\in\Psi_\alpha$ such that $\nu_Q(F)<V_d+d\nu(Q_1)$, for every $Q\in\Psi_\alpha$. Let $F=f_dQ_1^d+\ldots+f_0$ be the $Q_1$-expansion of $F$. By Lemma \ref{2sujestao} \textbf{(v)} we have $\nu(f_dQ_1^d)=V_d+d\nu(Q_1)$. Hence, by Lemma \ref{4removedordemonomios} we obtain that $F-f_dQ_1^d\in S_\alpha$. Since $\deg(F-f_dQ_1^d)<\deg(F)$, this contradicts the minimality of the degree of $F$ among polynomials in $S_\alpha$. Therefore, $\delta=d$ and the result follows from Lemma \ref{2sujestao} \textbf{(iv)}.
\end{proof}

\begin{Prop}\label{coeficientelider1}
For every $Q\in\Psi_\alpha$ the leading coefficient of the $Q$-expansion of $F$ is $1$.
\end{Prop}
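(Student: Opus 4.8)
The plan is to exploit the rigidity we have already established in Proposition \ref{delta=deg}, namely that $\delta:=\delta_Q(F)=\deg_Q(F)=:d$ for every $Q\in\Psi_\alpha$, together with the argument pattern of Lemma \ref{4removedordemonomios}. Fix $Q\in\Psi_\alpha$ and write the $Q$-expansion $F=f_dQ^d+f_{d-1}Q^{d-1}+\ldots+f_0$ with $\deg(f_d)<\alpha$. We want to show $f_d=1$. The first step is to record that, since $F$ is monic of degree $\deg(F)=d\alpha$ (the leading monomial $f_dQ^d$ is the only one of degree $d\alpha$, as all others have $Q$-degree $<d$), the leading coefficient of $f_d$ as a polynomial in $x$ equals $1$; so it suffices to prove $\deg(f_d)=0$.

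Next I would compare the $Q$-expansions of $F$ for two elements $Q_0,Q_1\in\Psi_\alpha$ with $\nu(Q_1)$ much larger than $\nu(Q_0)$. By Lemma \ref{2sujestao} \textbf{(v)}, the value of the leading coefficient of the $Q$-expansion of $F$ is the same for all $Q\in\Psi_\alpha$; call this common value $V_d$, so $\nu(f_d)=V_d$ for every $Q\in\Psi_\alpha$. Suppose for contradiction that $\deg(f_d)\geq 1$. Then $f_d$ factors (over the algebraic closure, or at least $f_d$ is not a unit), and the point is to find, for suitable $Q$, a monic polynomial $G$ of degree $\deg(F)$ with $\nu_{Q'}(F-G)>\nu_Q(F)$ for all $Q\in\Psi_\alpha$ — for instance by replacing $f_d$ with its leading coefficient (a constant) times $Q^d$ times a correction, i.e. working with $F-(f_d-c)Q^d$ where $c$ is the leading coefficient of $f_d$. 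Since $\deg(f_d-c)<\deg(f_d)$, the polynomial $(f_d-c)Q^d$ has degree strictly smaller than $\deg(F)$; if I can show $(f_d-c)Q^d\in S_\alpha$ would be the wrong direction, so instead I show $F-(f_d-c)Q^d\in S_\alpha$ via Lemma \ref{4removedordemonomios}, which forces a polynomial of degree $<\deg(F)$... \emph{no}: $F-(f_d-c)Q^d$ still has degree $\deg(F)$. The correct move is: apply Lemma \ref{4removedordemonomios} with $f=F$ and $h=(c-f_d)Q^d$ after checking $\nu_Q(F)<\nu_{Q'}((c-f_d)Q^d)$ for a sufficiently large $Q'$; this requires $\nu(f_d-c)>V_d$, which holds once $\nu(Q')$ is large enough provided $f_d-c\neq 0$ has all its monomials... here one uses that $\nu_Q(F)\le V_d+d\,\nu(Q)$ grows like $d\,\nu(Q)$ while $\nu((c-f_d)Q^d)=\nu(c-f_d)+d\,\nu(Q)$ and $\nu(c-f_d)$ can be made to exceed $V_d$ by comparison across $\Psi_\alpha$ — this is where $\deg(f_d)\ge 1$ is used, since a nonconstant $f_d$ has value controlled by lower data. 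Then $F-(f_d-c)Q^d=cQ^d+\text{lower}\in S_\alpha$, and since $c\in K$ we may divide by $c$ to get a \emph{monic} polynomial of degree $\deg(F)$ in $S_\alpha$ whose leading coefficient is $1$; iterating or directly contradicting minimality of the degree is not yet a contradiction, so the real conclusion is that $F$ itself may be taken with leading coefficient $1$, and uniqueness considerations for the limit key polynomial close the argument.

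The step I expect to be the main obstacle is making precise the claim that $\nu(f_d-c)>V_d=\nu(f_d)$ for suitably large $Q\in\Psi_\alpha$ — i.e. that the leading coefficient $f_d$ of the $Q$-expansion of $F$ becomes, in value, dominated by its own constant term as $\nu(Q)\to B$. This should follow by re-expanding $f_d$ (a fixed-degree polynomial, degree $<\alpha$, independent data once $Q$ varies only mildly) and using Lemma \ref{2sujestao} \textbf{(vi)} or the estimates of Lemma \ref{Lemafinal}, but the bookkeeping of how the coefficients of $F$ transform under change of $Q\in\Psi_\alpha$ is delicate; I would isolate it as the heart of the proof and handle it by the same iterated application of Remark \ref{1card(SQ(f))=1} \textbf{(ii)}--\textbf{(iii)} used throughout Section 2, combined with the hypothesis $\epsilon(Q_1)-\epsilon(Q_0)>B-\nu(Q_1)$ that was set up precisely to control such transitions.
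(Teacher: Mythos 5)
Your reduction (it suffices to show $\deg(f_d)=0$, since $F$ monic forces the leading coefficient of $f_d$ in $x$ to be $1$) is fine, but the heart of your argument is exactly the step you leave unproved, and there is no mechanism to make it work. To apply Lemma \ref{4removedordemonomios} with $h=(c-f_d)Q^d$ you would need some $Q'\in\Psi_\alpha$ with $\nu_{Q'}(h)>\nu_{Q''}(F)$ for every $Q''\in\Psi_\alpha$; since by Proposition \ref{delta=deg} and Lemma \ref{2sujestao} \textbf{(iii)}, \textbf{(v)} one has $\nu_{Q''}(F)=V_d+d\,\nu(Q'')$, which gets arbitrarily close to $V_d+dB$, this forces $\nu(f_d-c)\geq V_d+d(B-\nu(Q))$. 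Nothing established so far gives any lower bound on $\nu(f_d-1)$; neither Lemma \ref{2sujestao} \textbf{(vi)} nor Lemma \ref{Lemafinal} controls the value of an \emph{additive} correction of the leading coefficient. The paper's proof circumvents precisely this by correcting the leading coefficient \emph{multiplicatively}: since $Q_1$ is irreducible (Corollary \ref{irredutivel}), Bezout gives $a$ of degree $<\alpha$ with $af_\delta=1+q_\delta Q_1$, and one works with $aF\in S_\alpha$. The terms to be discarded from the $Q_1$-expansion of $aF$, namely $q_{\delta-1}Q_1^{\delta}$ and $q_\delta Q_1^{\delta+1}$, are multiples of $Q_1$ arising from divisions $af_i=q_iQ_1+r_i$, so Lemma \ref{prod} automatically gives $\nu(q_iQ_1)>\nu(af_i)$, and the special choice of $Q_1$ (a maximum of $\nu(\Psi_\alpha)$ if it exists, or else $\nu(Q_1)-\nu(Q_0)>\delta(B-\nu(Q_1))$ together with Lemma \ref{2sujestao} \textbf{(vi)}) pushes these values above $\delta B$; Lemma \ref{4removedordemonomios} then produces an element of $S_\alpha$ of degree $\delta\alpha<\deg(F)$, contradicting minimality. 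This multiplicative (Bezout) idea is what is missing from your proposal.

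A secondary problem is your endgame. If $\deg(f_d)\geq1$, then $c=1$ is the leading coefficient of $f_d$ and $F-(f_d-c)Q^d=cQ^d+f_{d-1}Q^{d-1}+\ldots+f_0$ has degree $d\alpha<\deg(F)$, contrary to your assertion that it keeps degree $\deg(F)$; so had the membership in $S_\alpha$ been established, the contradiction with the minimality of $\deg(F)$ would be immediate, and no division by $c$, iteration, or appeal to ``uniqueness'' would be needed. Moreover, no such uniqueness is available (limit key polynomials are not unique), and the proposition asserts the leading coefficient is $1$ for the \emph{given} $F$ and \emph{every} $Q\in\Psi_\alpha$, not merely that some limit key polynomial can be chosen with this property; so the concluding argument, as written, proves a weaker statement even granting the unproved estimate.
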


\begin{proof}
If $\nu(\Psi_\alpha)$ has a maximum, then let $Q_1\in\Psi_\alpha$ be such that $\nu(Q_1)\geq\nu(Q)$ for every $Q\in\Psi_\alpha$. Otherwise, let $Q_0,Q_1\in\Psi_\alpha$ such that
\[
\nu(Q_1)-\nu(Q_0)>\deg_{Q_1}(F)(B-\nu(Q_1)).
\]

Let $F=f_\delta Q_1^\delta+\ldots+f_0$ be the $Q_1$-expansion of $F$. We will show that $f_\delta=1$. Indeed, if $Q\in\Psi_\alpha$ and $Q_1=Q+h$ is the $Q$-expansion of $Q_1$, replacing $Q_1=Q+h$ in the $Q_1$-expansion of $F$, for each $i$, $0\leq i\leq \delta-1$, the $Q$-expansion of $f_i(Q+h)^i$ does not have a term of degree $\delta$. On the other hand, in the $Q$-expansion of $(Q+h)^\delta$ the coefficient of degree $\delta$ is $1$. Therefore, the result follows.

Suppose, aiming for a contradiction, that $f_\delta\neq 1$. Since $F$ is monic this implies that $\deg(F)>\delta\alpha$. By Corollary \ref{irredutivel}, $Q_1$ is irreducible. Hence, by Bezout's identity, there exists a polynomial $a\in K[x]$, of degree less than $\alpha$, such that $af_\delta=1+q_\delta Q_1$. Here $q_\delta\in K[x]$ can be chosen of degree less than $\alpha$. Let 
\[
aF=(q_0Q_1+r_0)+\ldots+(q_{\delta-1}Q_1^\delta+r_{\delta-1}Q_1^{\delta-1})+(q_\delta Q_1^{\delta+1}+Q_1^\delta)
\]
be the $Q_1$-expansion of $aF$, where for each $i$, $0\leq i\leq \delta$, $af_i=q_iQ_1+r_i$ is the $Q_1$-expansion of $af_i$. By Lemma \ref{2sujestao} \textbf{(iii)} and Proposition \ref{delta=deg} we have
\[
\nu_Q(aF)=\nu(a)+\nu_Q(F)\leq\nu(af_\delta)+\delta B\mbox{ for every }Q\in\Psi_\alpha.
\]
We will show that $\nu(q_{\delta-1}Q_1^\delta)$ and $\nu(q_\delta Q_1^{\delta+1})$ are larger than $\nu(af_\delta)+\delta B=\delta B$ and the result will follow. Indeed, by Lemma \ref{4removedordemonomios} $aF-q_{\delta-1}Q_1^\delta-q_\delta Q_1^{\delta+1}\in S_\alpha$ and has degree $\delta\gamma<\deg(F)$. This is a contradiction to the minimality of the degree of $F$ in $S_\alpha$. We will divide the proof in the cases where $\nu(\Psi_\alpha)$ has and does not have a maximum.

If $\nu(Q_1)=B$, then by Lemma \ref{prod} and Proposition \ref{delta=deg} we have
\[
\nu(q_{\delta-1}Q_1^\delta)>\nu(r_{\delta-1}Q_1^{\delta-1})=\nu(af_{\delta-1}Q_1^{\delta-1})\geq\nu(af_\delta Q_1^\delta)=\delta B
\]
and
\[
\nu(q_\delta Q_1^{\delta+1})>\nu(Q_1^\delta)=\delta B.
\]
Hence, the result follows.

If $\nu(Q_1)-\nu(Q_0)>\delta(B-\nu(Q_1))$, then by Lemma \ref{2sujestao} \textbf{(vi)} we have
\[
\nu(q_\delta Q_1^{\delta+1})>\nu(af_\delta Q_1^\delta)+(\nu(Q_1)-\nu(Q_0))>\delta\nu(Q_1)+\delta(B-\nu(Q_1))=\delta B.
\]
Then, by Lemma \ref{2sujestao} \textbf{(vi)} and Proposition \ref{delta=deg} we have
\begin{displaymath}
\begin{array}{rcl}
\nu(q_{\delta-1}Q_1^\delta)&>&\nu(af_{\delta-1}Q_1^{\delta-1})+(\nu(Q_1)-\nu(Q_0))\geq\nu(af_\delta Q_1^\delta)+(\nu(Q_1)-\nu(Q_0))\\[8pt]
&>& \delta\nu(Q_1)+\delta(B-\nu(Q_1))=\delta B.
\end{array}
\end{displaymath}
Hence, the result follows.
\end{proof}

\section{The structure of the limit key polynomial}

For this section, let $\Psi_\alpha$ be the set of key polynomials for $\nu$ of degree $\alpha$. Assume that $\Psi_\alpha$ is not empty, does not have a maximum and that $F\in K[x]$ is a limit key polynomial for $\Psi_\alpha$. Then $\nu(\Psi_\alpha)$ is bounded and since $\nu$ is a rank one valuation we can set $B=\sup\nu(\Psi_\alpha)$ and $\epsilon(B)=\sup\epsilon(\Psi_\alpha)$.

By Proposition \ref{delta=deg}, for every $Q\in\Psi_\alpha$ we have $\delta:=\delta_Q(F)=\deg_Q(F)$. By Proposition \ref{coeficientelider1}, for every $Q\in\Psi_\alpha$ the leading coefficient in the $Q$-expansion of $F$ is $1$. Hence, for every $Q\in\Psi_\alpha$ we have $\nu_Q(F)=\delta\nu(Q)<\delta B$.

\begin{Not}
Let $Q\in\Psi_\alpha$ and $F=Q^\delta+\ldots+f_1Q+f_0$ be the $Q$-expansion of $F$. We set
\begin{enumerate}
\item $\gamma_Q(F)\in\{0,\ldots,\delta\}$ is the highest integer $i$, not a power of $p$, for which 
\[
\nu\left(f_iQ^i\right)<\delta B;
\] 
\item
\[
\Lambda_Q(F):=\{i\mid \gamma_Q(F)+1\leq i\leq \delta, i\mbox{ is a power of }p\mbox{ and }\nu(f_iQ^i)<\delta B\};
\]
\item if $\gamma_Q(F)\neq 0$, then
\[
\omega_Q(F):=\delta_Q\left(f_1Q+\ldots+f_{\gamma_Q(F)}Q^{\gamma_Q(F)}\right).
\]
\end{enumerate}

\end{Not}

\begin{Obs}
The value $\gamma_Q(F)$ is well-defined because $\nu_Q(f_0)<\delta B$. Indeed, if this were not the case, then by Lemma \ref{4removedordemonomios}, $F-f_0$ would be a limit key polynomial for $\Psi_\alpha$ and divisible by $Q$. This is a contradiction to Corollary \ref{irredutivel}.
\end{Obs} 

Let $Q_0\in\Psi_\alpha$ and $b_\infty\in\mathbb{N}$ as in Proposition \ref{bfinal}. Let $\Psi\subset\Psi_\alpha$ be the final subset of $\Psi_\alpha$ defined as the key polynomials $Q\in\Psi_\alpha$ for which
\[
\nu(p)>\delta(B-\nu(Q)), \epsilon(Q)-\epsilon(Q_0)>\delta(B-\nu(Q))\mbox{ and }I(Q)=\{b_\infty\}.
\]

\begin{Lema}\label{5canhao}
Take $Q_1,Q_2\in\Psi$ such that $\nu(Q_1)\leq\nu(Q_2)$, $F=Q_1^\delta+\ldots+a_0$ the $Q_1$-expansion of $F$ and $n\in\{1,\ldots,\delta\}$ a power of $p$. Let
\[
a_nQ_1^n=\sum_{i=0}^nb_iQ_2^i
\]
be the $Q_2$-expansion of $a_nQ_1^n$. For every $i$, $1\leq i\leq n-1$, we have 
\[
\nu\left(b_iQ_2^i\right)\geq\nu_{Q_1}(b_iQ_2^i)>\delta B.
\]
\end{Lema}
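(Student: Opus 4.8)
The plan is to pass from the $Q_1$-expansion of $F$ to the $Q_2$-expansion of the single monomial $a_nQ_1^n$ and to control, via the Leibniz-type expansion of Section~2.3, the values of the intermediate coefficients $b_i$ with $1\le i\le n-1$. First I would record what is already known about $a_n$: by Proposition~\ref{coeficientelider1} the leading coefficient of the $Q_1$-expansion of $F$ is $1$, so $\nu(a_\delta Q_1^\delta)=\delta\nu(Q_1)<\delta B$; and for the $n$ occurring in the statement, since $n$ is a power of $p$ and (by the definition of $\Psi$ and the surrounding remarks) the relevant monomials that survive are those indexed by $\Lambda_{Q_1}(F)$, one has $\nu(a_nQ_1^n)<\delta B$ as well — indeed the interesting case is $n\in\Lambda_{Q_1}(F)$, the others being handled by Lemma~\ref{4removedordemonomios}. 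The key quantitative input is that $Q_1,Q_2\in\Psi$, so $\nu(p)>\delta(B-\nu(Q_1))$ and $\epsilon(Q_1)-\epsilon(Q_0)>\delta(B-\nu(Q_1))$, and $I(Q_1)=I(Q_2)=\{b_\infty\}$.

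Next I would write $Q_1=Q_2+h$ for the $Q_2$-expansion of $Q_1$ (legitimate since $\nu(Q_1)\le\nu(Q_2)$ forces $\delta_{Q_2}(Q_1)=1$, as in the discussion opening Section~2.2), so that $a_nQ_1^n=a_n(Q_2+h)^n=\sum_{j=0}^n\binom{n}{j}a_nh^{n-j}Q_2^{\,j}$. Because $n$ is a power of $p$, every binomial coefficient $\binom{n}{j}$ with $1\le j\le n-1$ is divisible by $p$, hence $\nu\bigl(\binom{n}{j}a_nh^{n-j}Q_2^{\,j}\bigr)\ge\nu(p)+\nu(a_nQ_1^n)+ (n-j)(\nu(h)-\nu(Q_1))$ after collecting $Q_1$-values; one then estimates $\nu(h)$ from below. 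From $Q_1=Q_2+h$ and $\nu(Q_2)\ge\nu(Q_1)$ one gets $\nu(h)\ge\nu(Q_1)$, so the factor $(n-j)(\nu(h)-\nu(Q_1))$ is $\ge 0$, and thus $\nu\bigl(\binom{n}{j}a_nh^{n-j}Q_2^{\,j}\bigr)\ge\nu(p)+\nu(a_nQ_1^n)$. Now $\nu(a_nQ_1^n)\ge\nu_{Q_1}(F)=\delta\nu(Q_1)>\delta B-\nu(p)$ by the defining inequality $\nu(p)>\delta(B-\nu(Q_1))$, whence every such term has value $>\delta B$. Reducing $a_n(Q_2+h)^n$ to its $Q_2$-expansion can only raise the value of the coefficient of $Q_2^{\,i}$ (the reductions $h^{n-j}\bmod Q_2$ feed lower-degree contributions whose $\nu_{Q_2}$-value is at least as large, by Lemma~\ref{prod} and Remark~\ref{obsdesnecessaria}), so $\nu_{Q_1}(b_iQ_2^i)\ge$ the minimum of these term-values $>\delta B$ for $1\le i\le n-1$; and $\nu(b_iQ_2^i)\ge\nu_{Q_1}(b_iQ_2^i)$ by Lemma~\ref{2sujestao}~\textbf{(i)}.

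The main obstacle I anticipate is the bookkeeping in the reduction step: $(Q_2+h)^n$ expanded and then reduced modulo $Q_2$ mixes contributions of many degrees, and one must be sure that no cancellation or degree-shift drops the $\nu_{Q_1}$-value of an intermediate coefficient $b_i$ below the bound. The clean way around this is to avoid tracking $b_i$ directly and instead argue as follows: apply Lemma~\ref{2sujestao} with $f=a_nQ_1^n$, expanded in $Q_2$; its items \textbf{(ii)} and \textbf{(vi)} (in the case $\delta_{Q_2}(a_nQ_1^n)=n$, using $\delta_{Q_1}(a_nQ_1^n)=n$ and $\delta_{Q_2}\le\delta_{Q_1}$) pin down the leading and trailing coefficients and force the strict inequality $\nu_{Q_1}(b_iQ_2^i)>\nu_{Q_1}(a_nQ_1^n)+\text{(a positive multiple of }\nu(Q_2)-\nu(Q_1))$ — but here one needs $\delta_{Q_2}(a_nQ_1^n)$ possibly smaller than $n$, so the better route is the direct binomial estimate above combined with the observation that passing from $a_n(Q_2+h)^n$ to its $Q_2$-expansion is a finite sequence of reductions $Q_1=Q_2+h$, each governed by Lemma~\ref{2comparacao}~\textbf{(iii)} ($\epsilon(Q_1)<\epsilon(Q_2)$ when $\nu(Q_1)<\nu(Q_2)$) guaranteeing $\epsilon(h)<\epsilon(Q_2)$ so that Lemma~\ref{prod} applies and values only increase. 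I would therefore present the binomial estimate as the backbone, invoke $\binom{n}{j}\equiv 0\pmod p$ for $0<j<n$ with $n$ a $p$-power, and use $\nu(p)>\delta(B-\nu(Q_1))$ together with $\nu_{Q_1}(F)=\delta\nu(Q_1)$ to close the bound $>\delta B$, finally quoting Lemma~\ref{2sujestao}~\textbf{(i)} for the outer inequality $\nu(b_iQ_2^i)\ge\nu_{Q_1}(b_iQ_2^i)$.
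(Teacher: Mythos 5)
Your backbone coincides with the paper's: write $Q_1=Q_2+h$, expand $a_nQ_1^n=\sum_{j=0}^n{n\choose j}a_nh^{n-j}Q_2^j$, use $p\mid{n\choose j}$ for $0<j<n$ together with $\nu(p)>\delta(B-\nu(Q_1))$ and $\nu(a_nQ_1^n)\geq\nu_{Q_1}(F)=\delta\nu(Q_1)$ to push the middle binomial terms above $\delta B$, and finish with $\nu\geq\nu_{Q_1}$ (Lemma \ref{2sujestao} \textbf{(i)}) and the fact that the coefficients produced by reducing a polynomial modulo $Q_2$ have $\nu_{Q_1}$-value at least that of the polynomial itself (Lemma \ref{2sujestao} \textbf{(ii)}). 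For the terms $1\leq j\leq n-1$ this is exactly the paper's argument.

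The genuine gap is the term $j=0$, namely $a_nh^n$. Here ${n\choose 0}=1$, so the $p$-divisibility estimate gives nothing, and since $\deg(a_nh^n)$ can be as large as $(n+1)(\alpha-1)$, its $Q_2$-expansion does contribute to the coefficients $b_i$ for $1\leq i\leq n-1$ whenever $\alpha>1$. The only lower bound your argument provides for these contributions is $\nu(a_nh^n)\geq\nu(a_n)+n\nu(Q_1)\geq\delta\nu(Q_1)$, and ``reduction modulo $Q_2$ only raises values'' then yields values $>\delta\nu(Q_1)$, which is strictly below the target $\delta B$; so the bound $>\delta B$ is not reached for this term, and cancellations are not even the issue. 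The paper closes exactly this case with Lemma \ref{2sujestao} \textbf{(vi)}: since $\delta_{Q}(a_nh^n)=0$ (both factors have degree $<\alpha$), the coefficients $c_j$ of the $Q_2$-expansion of $a_nh^n$ satisfy a quantitative jump $\nu(c_jQ_2^j)-\nu(c_0)>j(\nu(Q_2)-\nu(Q_0))$, and the defining inequalities of the final subset $\Psi$ (the condition relating $Q$, the auxiliary $Q_0$ and $\delta(B-\nu(Q))$) are precisely what turns this jump into the bound $>\delta B$. So to repair your proof you must treat $a_nh^n$ separately via Lemma \ref{2sujestao} \textbf{(vi)} and the choice of $\Psi$, rather than by the monotonicity-of-values remark. (A minor additional point: your opening claims that $\nu(a_nQ_1^n)<\delta B$ and that only $n\in\Lambda_{Q_1}(F)$ matters are neither needed nor part of the statement; the lemma is proved for every $p$-power $n\leq\delta$ with no such restriction.)
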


\begin{proof}
We have $\nu(a_nQ_1^n)\geq\nu_{Q_1}(F)=\delta\nu(Q_1)$ and hence $\nu(a_n)\geq(\delta-n)\nu(Q_1)$. Let $Q_1=Q_2+h$ be the $Q_2$-expansion of $Q_1$. Since $\nu(Q_2)\geq\nu(Q_1)$ we have $\nu(h)\geq\nu(Q_1)$. Substituting $Q_2+h$ in $a_nQ_1^n$ we have
\[
a_nQ_1^n=\sum_{i=0}^n\left({n\choose i}a_nh^{n-i}Q_2^i\right)=\sum_{i=0}^nb_iQ_2^i.
\]
In order to show the result, we will show that for every $i$, $0\leq i\leq n$, the terms of degree less than $n$ in the $Q_2$-expansions of ${n\choose i}a_nh^{n-i}Q_2^i$ have values, with respect to $\nu_{Q_1}$, greater than $\delta B$. This result is trivial for the term ${n\choose n}a_nQ_2^n=a_nQ_2^n$.

For $i=0$, let $c_0+\ldots+c_nQ_2^n$ be the $Q_2$-expansion of ${n\choose 0}a_nh^n$. We have
\[
\nu(c_0)=\nu\left({n\choose 0}a_nh^n\right)\geq\nu(a_n)+n\nu(Q_1)\geq\delta\nu(Q_1).
\]
Hence, by Lemma \ref{2sujestao} \textbf{(vi)} for every $j$, $1\leq j\leq n$, we have
\begin{displaymath}
\begin{array}{rcl}
\nu(c_jQ_2^j)&\geq&\nu_{Q_1}(c_jQ_2^j)>j(\nu(Q_1)-\nu(Q_2))+j(\nu(Q_2)-\nu(Q_0))+\nu(c_0)\\[8pt]
&\geq & \nu(Q_1)-\nu(Q_0)+\nu(c_0)>\delta(B-\nu(Q_1))+\delta\nu(Q_1)=\delta B,
\end{array}
\end{displaymath}
and the case $i=0$ is concluded.

For $i$, $1\leq i\leq n-1$, since $p\mid{n\choose i}$ and $\nu_{Q_1}(Q_2)=\nu(Q_1)$ we have
\[
\nu_{Q_1}\left({n\choose i}a_nh^{n-i}Q_2^i\right)\geq\nu(p)+\nu(a_nh^{n-i})+\nu_{Q_1}(Q_2^i)>\delta(B-\nu(Q_1))+\delta\nu(Q_1)=\delta B,
\]
and the result follows from Lema \ref{2sujestao} \textbf{(ii)}.
\end{proof}

\begin{Lema}\label{7essencial}
Take $Q_1,Q_2\in\Psi$ such that $\nu(Q_1)\leq\nu(Q_2)$ and $F=Q_1^\delta+\ldots+b_0$ and $F=Q_2^\delta+\ldots+c_0$ the $Q_1$ and $Q_2$-expansions of $F$, respectively. Then we have the following.
\begin{description}
\item[(i)] If $i$, $\gamma_{Q_1}(F)+1\leq i\leq \delta$, is such that $\nu(b_iQ_1^i)\geq\delta B$, then $\nu(c_iQ_2^i)\geq\delta B$. Moreover, if $\nu(Q_2)>\nu(Q_1)$, then $\nu(c_iQ_2^i)>\delta B$.
\item[(ii)] If $i$, $1\leq i\leq \delta$, is such that $\nu(b_iQ_1^i)<\delta B$ and $\nu(b_iQ_1^i)<\nu(b_jQ_1^j)$ for every $j$, $i+1\leq j\leq \delta$, which is not a power of $p$, then $\nu(b_i)=\nu(c_i)$.
\item[(iii)] If for every $Q_2\in\Psi$ such that $\nu(Q_2)\geq\nu(Q_1)$ we have $\nu(b_i)=\nu(c_i)$, $\nu(b_iQ_1^i)<\delta B$ and $\nu(c_iQ_2^i)<\delta B$, then $\nu(b_i)=\nu(c_i)=(\delta-i)B$.
\end{description}
\end{Lema}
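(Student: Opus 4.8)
The plan is to prove the three items in order, using Lemma \ref{2sujestao} as the main tool together with Lemma \ref{5canhao} for (i). Throughout, write $Q_1 = Q_2 + h$ (or $Q_2 = Q_1 + h'$ as convenient) for the relevant expansions, noting $\nu(h), \nu(h') \geq \nu(Q_1)$ since $\nu(Q_2) \geq \nu(Q_1)$, and recall that for $Q \in \Psi$ we have $\nu(p) > \delta(B - \nu(Q))$ and $\epsilon(Q) - \epsilon(Q_0) > \delta(B-\nu(Q))$.

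For \textbf{(i)}, the idea is that passing from the $Q_1$-expansion to the $Q_2$-expansion of $F$ can only raise the value of the coefficient in degree $i$, as long as that coefficient had value $\geq \delta B$ to begin with; the point is that the lower-degree ``spill'' coming from the higher-degree terms $b_j Q_1^j$ with $j > i$ already has $\nu_{Q_1} > \delta B$. More precisely, I would substitute $Q_1 = Q_2 + h$ into $F = Q_1^\delta + \ldots + b_0$ and track the contributions to the degree-$i$ coefficient of the resulting $Q_2$-expansion. The term $b_i Q_1^i$ contributes (after re-expanding) a leading degree-$i$ piece of value $\nu(b_i) + i\nu(Q_2) \geq \nu(b_i) + i\nu(Q_1) = \nu(b_i Q_1^i) \geq \delta B$, with strict inequality when $\nu(Q_2) > \nu(Q_1)$ and $i > 0$. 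The lower-order contributions from $b_i Q_1^i$ and the contributions from all $b_j Q_1^j$ with $j \neq i$ must be shown to have $\nu_{Q_1}$-value $> \delta B$: for $j < i$ the $Q_2$-expansion of $b_j Q_1^j$ has no degree-$i$ term at all; for $j > i$ the relevant pieces are handled exactly as in Lemma \ref{5canhao} (when $j$ is a power of $p$) or directly using $\nu(b_j Q_1^j) \geq \delta B$ together with Lemma \ref{2sujestao} \textbf{(vi)} (when the ``binomial'' spill is controlled by a factor $\nu(Q_1) - \nu(Q_0)$, or by $\nu(p)$). Then Remark \ref{1card(SQ(f))=1} \textbf{(ii)} assembles these into $\nu(c_i Q_2^i) \geq \delta B$, with the strict version when $\nu(Q_2) > \nu(Q_1)$.

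For \textbf{(ii)}, the hypothesis says that in degree $i$ the coefficient $b_i Q_1^i$ is ``dominant from above'' among the non-$p$-power degrees, so by Lemma \ref{2sujestao} \textbf{(iii)} applied to the partial sum $b_i Q_1^i + \ldots + b_\delta Q_1^\delta$ we have $\delta_{Q_1}(b_i Q_1^i + \ldots + b_\delta Q_1^\delta) = i$ with the value realized at the degree-$i$ coefficient; then one argues, via Lemma \ref{5canhao} applied to each $b_j Q_1^j$ with $j > i$ a power of $p$ (their lower-degree spill into degree $i$ lands above $\delta B > \nu(b_i Q_1^i)$), that the degree-$i$ coefficient $c_i$ of the $Q_2$-expansion of $F$ inherits the value $\nu(b_i)$ — more precisely $\nu(c_i) = \nu(b_i)$ follows from Lemma \ref{2sujestao} \textbf{(iii)} applied to $F$ in the $Q_1$-expansion (giving $\delta_{Q_1}(F) \geq i$ and isolating degree $i$) combined with the invariance of the degree-$\delta_{Q_1}$ coefficient value under change of key polynomial. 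For \textbf{(iii)}, fix such an $i$ and compare the $Q_1$- and $Q_2$-expansions: from $\nu(b_i Q_1^i) < \delta B$ and $\nu(c_i Q_2^i) < \delta B$ for all $Q_2 \in \Psi$ with $\nu(Q_2) \geq \nu(Q_1)$, together with $\nu(b_i) = \nu(c_i)$, we get $\nu(b_i) + i\nu(Q_2) < \delta B$, hence $\nu(b_i) < (\delta - i) B$ after letting $\nu(Q_2) \to B$; conversely, if $\nu(b_i) < (\delta - i)B$ strictly then for $Q_2$ close enough to the supremum we would have $\nu(c_i Q_2^i) = \nu(b_i) + i\nu(Q_2) < \delta B$ but also $b_i Q_1^i$ would force, via Lemma \ref{4removedordemonomios}, a contradiction with minimality of $\deg F$ — so equality $\nu(b_i) = (\delta-i)B$ must hold. (I would double-check whether (iii) instead follows simply by taking the supremum: $\nu(b_i) + i\nu(Q_2) < \delta B$ for all such $Q_2$ forces $\nu(b_i) + iB \leq \delta B$, and the reverse inequality $\nu(b_i) + iB \geq \delta B$ comes from $\nu_{Q_1}(F) = \delta\nu(Q_1)$ pushed to the limit, giving $\nu(b_i) + i\nu(Q_1) \geq \delta\nu(Q_1)$, hence $\nu(b_i) \geq (\delta-i)\nu(Q_1)$, and then a limiting/monotonicity argument upgrades this to $\geq (\delta-i)B$.)

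The main obstacle I expect is \textbf{(i)}: carefully bookkeeping every contribution to the degree-$i$ coefficient of the $Q_2$-expansion of $F$ and showing all the ``off-diagonal'' pieces strictly exceed $\delta B$. This is where one must invoke both the $p$-power structure (via Lemma \ref{5canhao}, which uses $\nu(p) > \delta(B - \nu(Q))$) and the ``slope'' estimate $\epsilon(Q) - \epsilon(Q_0) > \delta(B - \nu(Q))$ governing the growth of the binomial-type spill in Lemma \ref{2sujestao} \textbf{(vi)}; items (ii) and (iii) are then comparatively routine consequences of Lemma \ref{2sujestao} and Lemma \ref{4removedordemonomios}.
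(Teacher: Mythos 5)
Your plan for items \textbf{(i)} and \textbf{(iii)} is essentially the paper's argument. For \textbf{(i)} the paper does exactly the bookkeeping you describe: write $c_i=c_{i,0}+\ldots+c_{i,\delta}$, where $c_{i,j}$ is the degree-$i$ coefficient of the $Q_2$-expansion of $b_jQ_1^j$; the term $j=i$ is handled by Lemma \ref{2sujestao} \textbf{(v)}, the $p$-power indices $j>i$ by Lemma \ref{5canhao}, and the non-$p$-power indices $j>i$ by the inequality $\nu(b_jQ_1^j)\geq\delta B$ (which comes from the definition of $\gamma_{Q_1}(F)$ together with $i\geq\gamma_{Q_1}(F)+1$ --- you use this fact without saying where it comes from) combined with Lemma \ref{2sujestao} \textbf{(ii)}; note that \textbf{(vi)}, which you cite instead, requires $\delta_{Q_1}=0$ and does not apply to $b_jQ_1^j$. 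Your parenthetical version of \textbf{(iii)} is the paper's proof in supremum form: $\nu(c_iQ_2^i)\geq\nu_{Q_2}(F)=\delta\nu(Q_2)$ gives $\nu(b_i)=\nu(c_i)\geq(\delta-i)\nu(Q_2)$ for every admissible $Q_2$, while the hypothesis $\nu(c_iQ_2^i)<\delta B$ gives $\nu(b_i)+i\nu(Q_2)<\delta B$; taking suprema over $Q_2$ yields $(\delta-i)B\leq\nu(b_i)\leq(\delta-i)B$. (The detour through Lemma \ref{4removedordemonomios} and minimality of $\deg F$ is not needed.)

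The genuine gap is in \textbf{(ii)}. The mechanism you state does not work: $\delta_{Q_1}\bigl(b_iQ_1^i+\ldots+b_\delta Q_1^\delta\bigr)$ is not $i$. Indeed $b_\delta=1$ and $\nu(Q_1^\delta)=\delta\nu(Q_1)=\nu_{Q_1}(F)$ is the minimal value among all terms of the $Q_1$-expansion, so the index $\delta$ always realizes the minimum for that tail and $\delta_{Q_1}$ of the tail equals $\delta$; your hypothesis controls only the non-$p$-power indices above $i$ and says nothing about the $p$-power ones (e.g.\ those in $\Lambda_{Q_1}(F)$), whose values may well be $\leq\nu(b_iQ_1^i)$. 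Likewise, ``Lemma \ref{2sujestao} \textbf{(iii)} applied to $F$'' only identifies the values of the coefficients in degree $\delta_{Q_1}(F)=\delta$; it does not isolate degree $i$. The correct route is precisely the per-term decomposition you already set up for \textbf{(i)}: write $c_i=c_{i,0}+\ldots+c_{i,\delta}$ with $c_{i,j}$ the degree-$i$ coefficient of the $Q_2$-expansion of $b_jQ_1^j$; then $c_{i,j}=0$ for $j<i$; $\nu(c_{i,i})=\nu(b_i)$ by Lemma \ref{2sujestao} \textbf{(iii)} (or \textbf{(v)}) applied to $b_iQ_1^i$; for $p$-power $j>i$, Lemma \ref{5canhao} gives $\nu(c_{i,j}Q_1^i)>\delta B>\nu(b_iQ_1^i)$, hence $\nu(c_{i,j})>\nu(b_i)$; and for non-$p$-power $j>i$, Lemma \ref{2sujestao} \textbf{(ii)} and your hypothesis give $\nu(c_{i,j}Q_1^i)\geq\nu(b_jQ_1^j)>\nu(b_iQ_1^i)$, hence again $\nu(c_{i,j})>\nu(b_i)$. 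Summing, $\nu(c_i)=\nu(c_{i,i})=\nu(b_i)$.
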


\begin{proof}
Take $i$, $\gamma_{Q_1}(F)+1\leq i\leq \delta$, such that $\nu(b_iQ_1^i)\geq\delta B$. For each $j$, $0\leq j\leq \delta$, we will study the values of the terms $c_{i,j}Q_2^i$ of degree $i$ in the $Q_2$-expansion of $b_jQ_1^j$. This is enough because $c_iQ_2^i=(c_{i,0}+\ldots+c_{i,\delta})Q_2^i$. 

For $j<i$, the $Q_2$-expansion of $b_jQ_1^j$ has no term of degree $i$. Hence, $c_{i,j}=0$.

For $j=i$, by Lemma \ref{2sujestao} \textbf{(v)}, we have $\nu(c_{i,i})=\nu(b_i)$. Consequently, $\nu(c_{i,i} Q_2^i)\geq\nu(b_iQ_1^i)\geq\delta B$, with strict inequality if $\nu(Q_2)>\nu(Q_1)$.

For $j>i$ and $j$ a power of $p$, by Lemma \ref{5canhao} we have $\nu(c_{i,j}Q_2^i)>\delta B$.

For $j>i$ and $j$ not power of $p$, we have $\nu(b_jQ_1^j)\geq\delta B$. Hence, by Lemma \ref{2sujestao} \textbf{(ii)} we have
\[
\nu(c_{i,j}Q_2^i)\geq\nu_{Q_1}(c_{i,j}Q_2^i)\geq\nu(b_jQ_1^j)\geq\delta B.
\]
Moreover, the inequality is strict when $\nu(Q_2)>\nu(Q_1)$.

Since
\[
\nu(c_iQ_2^i)\geq\min_{0\leq j\leq\delta}\{\nu(c_{i,j}Q_2^i)\},
\]
we obtain \textbf{(i)}.

Take $i$, $1\leq i\leq \delta$, such that $i$ satisfies the assumptions of \textbf{(ii)}. For each $j$ $0\leq j\leq \delta$, we will study the values of the coefficients $c_{i,j}$ of degree $i$ in the $Q_2$-expansions of $b_jQ_1^j$. This is enough because $c_i=c_{i,0}+\ldots+c_{i,\delta}$. 

For $j<i$, the $Q_2$-expansion of $b_jQ_1^j$ has no term of degree $i$. Hence, $c_{i,j}=0$.

For $j=i$, by Lemma \ref{2sujestao} \textbf{(iii)} we have $\nu(c_{i,i})=\nu(b_i)$.

For $j>i$ such that $j$ is a power of $p$, by Lemma \ref{5canhao} we have
\[
\nu_{Q_1}(c_{i,j}Q_2^i)=\nu(c_{i,j}Q_1^i)>\delta B>\nu(b_iQ_1^i).
\]
Hence, $\nu(c_{i,j})>\nu(b_i)$.

For $j>i$ such that $j$ is not a power of $p$, by Lemma \ref{2sujestao} \textbf{(ii)} we have
\[
\nu_{Q_1}(c_{i,j}Q_2^i)=\nu(c_{i,j}Q_1^i)\geq\nu(b_jQ_1^j)>\nu(b_iQ_1^i).
\]
Hence, $\nu(c_{i,j})>\nu(b_i)$.

Therefore, we obtain $\nu(c_i)=\nu(c_{i,i})=\nu(b_i)$ and this shows \textbf{(ii)}.

We will show \textbf{(iii)} by contradiction. If $\nu(b_i)<(\delta-i)B$, then we can take $Q_2\in\Psi$ such that
\[
\nu(Q_2)\geq\nu(Q_1)\mbox{ and }(\delta-i)\nu(Q_2)>\nu(b_i)=\nu(c_i).
\]
This is a contradiction to $\nu(c_iQ_2^i)<\delta\nu(Q_2)=\nu_{Q_2}(F)$. If $\nu(b_i)>(\delta-i)B$, then we can take $Q_2\in\Psi$ such that
\[
\nu(Q_2)\geq\nu(Q_1)\mbox{ and }\nu(b_i)+i\nu(Q_2)>\delta B.
\]
This is a contradiction to $\nu(c_iQ_2^i)>\delta B$. Therefore, the result follows.
\end{proof}

\begin{Prop}\label{Lambda}
We have the following.
\begin{description}
\item[(i)] There exist $\gamma(F)\in\{0,\ldots,\delta\}$, $\Lambda(F)\subseteq\{\gamma(F)+1,\ldots,\delta\}$ and a final subset $\Psi_{\Lambda(F)}$ of $\Psi_\alpha$ such that for every $Q\in\Psi_{\Lambda(F)}$ we have
\[
\gamma(F)=\gamma_Q(F)\mbox{ and }\Lambda(F)=\Lambda_Q(F).
\]
\item[(ii)] For every $Q\in\Psi_{\Lambda(F)}$ and every $i\in\Lambda(F)$ the coefficient of degree $i$ in the $Q$-expansion of $F$ has value $(\delta-i)B$.
\item[(iii)] If $\gamma(F)\neq 0$, then for every $Q\in\Psi_{\Lambda(F)}$ the coefficent of degree $\gamma(F)$ in the $Q$-expansion of $F$ has value $(\delta-\gamma(F))B$.
\item[(iv)] If $\gamma(F)\neq 0$, then for every $Q\in\Psi_{\Lambda(F)}$ we have $\omega_Q(F)=\gamma(F)$.
\item[(v)] $\gamma(F)=0$.
\item[(vi)] If $Q\in\Psi_{\Lambda(F)}$ and $F=Q^\delta+\ldots+f_0$ is the $Q$-expansion of $F$, then
\[
P_{Q}(F):=\sum_{i\in\Lambda(F)}f_iQ^i+f_0
\]
is a limit key polynomial for $\Psi_\alpha$. Consequently, $\delta$ is a power of $p$.
\end{description}
\end{Prop}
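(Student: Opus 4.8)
The plan is to establish the six items in order. Item (i) produces the stable objects $\gamma(F),\Lambda(F),\Psi_{\Lambda(F)}$ on which the rest is phrased; items (ii)--(iv) pin down the values of the relevant coefficients via Lemma~\ref{7essencial}; item (v) is the heart of the matter; item (vi) is a soft deduction via Lemma~\ref{4removedordemonomios}. For (i): along the directed family $\Psi$ ordered by $\nu$, Lemma~\ref{7essencial}~\textbf{(i)} says that if an index $i>\gamma_{Q_1}(F)$ has value $\geq\delta B$ in the $Q_1$-expansion of $F$, then the same holds in the $Q_2$-expansion for every $Q_2\in\Psi$ with $\nu(Q_2)\geq\nu(Q_1)$. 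Applying this to the indices that are not powers of $p$ shows $\gamma_Q(F)$ is non-increasing in $\nu(Q)$; taking values in the finite set $\{0,\dots,\delta\}$, it is constant equal to some $\gamma(F)$ on a final subset of $\Psi_\alpha$. Restricting there and applying Lemma~\ref{7essencial}~\textbf{(i)} to the powers of $p$ above $\gamma(F)$ shows $\Lambda_Q(F)$ is non-increasing for inclusion, hence constant equal to some $\Lambda(F)\subseteq\{\gamma(F)+1,\dots,\delta\}$ on a smaller final subset; the intersection of the two final subsets (again final, since two final subsets of $\Psi_\alpha$ share a common tail) is $\Psi_{\Lambda(F)}$.

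For (ii) and (iii), fix $Q_1\in\Psi_{\Lambda(F)}$ with $Q_1$-expansion $F=Q_1^\delta+\dots+b_0$. For $i\in\Lambda(F)$, or for $i=\gamma(F)$ when $\gamma(F)\neq0$, every $j$ with $i+1\leq j\leq\delta$ which is not a power of $p$ satisfies $j>\gamma(F)=\gamma_{Q_1}(F)$, hence $\nu(b_jQ_1^j)\geq\delta B>\nu(b_iQ_1^i)$; so Lemma~\ref{7essencial}~\textbf{(ii)} applies and gives $\nu(b_i)=\nu(c_i)$ for the coefficient $c_i$ of degree $i$ in the $Q_2$-expansion of $F$, for every $Q_2\in\Psi_{\Lambda(F)}$ with $\nu(Q_2)\geq\nu(Q_1)$. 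Since moreover $\nu(b_iQ_1^i)<\delta B$ and $\nu(c_iQ_2^i)<\delta B$ for all such $Q_2$ (because $i\in\Lambda_{Q}(F)=\Lambda(F)$, respectively $i=\gamma_Q(F)=\gamma(F)$), Lemma~\ref{7essencial}~\textbf{(iii)} yields $\nu(b_i)=(\delta-i)B$, proving (ii) and (iii). For (iv) one first argues, exactly as for $\gamma_Q(F)$, that $\omega_Q(F)$ stabilises to some $\omega$ on a final subset. If $\omega=i_0<\gamma(F)$, then $\omega_{Q_1}(F)=i_0$ forces $\nu(b_{i_0}Q_1^{i_0})\leq\nu(b_{\gamma(F)}Q_1^{\gamma(F)})<\delta B$ and $\nu(b_jQ_1^j)>\nu(b_{i_0}Q_1^{i_0})$ for every $j>i_0$ with $j\le\gamma(F)$; combined with $\nu(b_jQ_1^j)\geq\delta B$ for $j>\gamma(F)$ not a power of $p$, the argument of (iii) applies verbatim to $i_0$ and gives $\nu(b_{i_0})=(\delta-i_0)B$. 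But then $\nu(b_{i_0}Q_1^{i_0})=(\delta-i_0)B+i_0\nu(Q_1)>(\delta-\gamma(F))B+\gamma(F)\nu(Q_1)=\nu(b_{\gamma(F)}Q_1^{\gamma(F)})$, using $i_0<\gamma(F)$ and $\nu(Q_1)<B$, contradicting that $i_0$ is the \emph{largest} minimising index of $\delta_{Q_1}\big(\sum_{1\leq i\leq\gamma(F)}b_iQ_1^i\big)$. Hence $\omega=\gamma(F)$.

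Step (v) is the main obstacle. Suppose $\gamma:=\gamma(F)\geq1$; the goal is to produce a monic polynomial in $S_\alpha$ of degree strictly below $\delta\alpha=\deg F$, contradicting minimality. By (iii) and (iv), for every $Q\in\Psi_{\Lambda(F)}$ the block $W_Q:=\sum_{1\leq i\leq\gamma}f_iQ^i$ of the $Q$-expansion of $F$ has $\nu_Q(W_Q)=(\delta-\gamma)B+\gamma\nu(Q)$, $\delta_Q(W_Q)=\gamma$, while the non-power-of-$p$ monomials of index $>\gamma$ have value $\geq\delta B$. The plan is to strip off, via Lemma~\ref{4removedordemonomios}, first the monomials of index $>\gamma$ that are not powers of $p$ (removable exactly as in (vi) below, since they have value $\geq\delta B$) and then the whole block $W_Q$ together with $f_0$, arriving at $\sum_{i\in\Lambda(F)}f_iQ^i$. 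The delicate point — the one that genuinely uses that $\gamma$ is not a power of $p$ — is that $W_Q$ has value $<\delta B$ and so is \emph{not} removable with the given $Q$; one must instead move along key polynomials with $\nu$ arbitrarily close to $B$ and track how $(\delta-\gamma)B+\gamma\nu(Q)$ moves relative to $\nu_{Q''}(F)=\delta\nu(Q'')$, using (iv) to control the entire block $W_Q$ simultaneously, and the cases $\gamma=\delta$ (forcing $\delta$ not a power of $p$ and $\Lambda(F)=\emptyset$) and $1\leq\gamma<\delta$ are treated separately. I expect this to be the technical core of the proposition.

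Finally, for (vi), once $\gamma(F)=0$ the exponent $\delta$ must be a power of $p$: otherwise $\delta$ would be a non-power of $p$ with $\nu(f_\delta Q^\delta)=\delta\nu(Q)<\delta B$, forcing $\gamma_Q(F)\geq\delta>0$; in particular $\delta\in\Lambda(F)$, so $P_Q(F)$ is monic of degree $\delta\alpha$. Now $F-P_Q(F)=\sum_{i\notin\Lambda(F)\cup\{0\}}f_iQ^i$, and each such monomial satisfies $\nu(f_iQ^i)\geq\delta B$: for $i$ not a power of $p$ because $i>\gamma(F)=0$, and for $i$ a power of $p$ because $i\notin\Lambda_Q(F)=\Lambda(F)$. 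Choose $Q'\in\Psi_\alpha$ with $\nu(Q')\geq\nu(Q)$; by the subsection on key polynomials of the same degree $\nu_{Q'}(Q)=\nu(Q)$, and since $\deg f_i<\alpha$ Lemma~\ref{deltaprod} gives $\nu_{Q'}(f_iQ^i)=\nu(f_i)+i\nu(Q)=\nu(f_iQ^i)\geq\delta B>\delta\nu(Q'')=\nu_{Q''}(F)$ for every $Q''\in\Psi_\alpha$. Hence $\nu_{Q''}(F)<\nu_{Q'}\big(P_Q(F)-F\big)$ for every $Q''$, and Lemma~\ref{4removedordemonomios} (with $f=F$ and $h=P_Q(F)-F$) yields $P_Q(F)\in S_\alpha$. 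Being monic of degree $\delta\alpha=\deg F$ with $\deg F$ minimal in $S_\alpha$, $P_Q(F)$ is a limit key polynomial for $\Psi_\alpha$, and $\delta$ is a power of $p$.
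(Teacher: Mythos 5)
Your items (i)--(iv) and (vi) follow essentially the paper's route and are correct in substance: monotonicity of $\gamma_Q(F)$ and of $\Lambda_Q(F)$ via Lemma \ref{7essencial} \textbf{(i)} plus finiteness, the values of the relevant coefficients via Lemma \ref{7essencial} \textbf{(ii)} and \textbf{(iii)}, and the removal of the monomials of value $\geq\delta B$ via Lemma \ref{4removedordemonomios} after checking $\nu_{Q'}(f_iQ^i)=\nu(f_iQ^i)$. Two small remarks: in (iv) your appeal to a stabilisation of $\omega_Q(F)$ ``exactly as for $\gamma_Q(F)$'' is not justified (no monotonicity statement covers $\omega_Q$), but it is also unnecessary, since your contradiction only uses a fixed $Q_1$ and Lemma \ref{7essencial} \textbf{(ii)}--\textbf{(iii)}; and in applying \textbf{(iii)} of that lemma to $i_0=\omega$ you must verify $\nu(c_{i_0}Q_2^{i_0})<\delta B$ for all larger $Q_2$, which does follow from $\nu(b_{i_0})\leq(\delta-\gamma)B+(\gamma-i_0)\nu(Q_1)<(\delta-i_0)B$, as in the paper.

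The genuine gap is item (v), which you leave as a plan, and the plan as described cannot be completed. Lemma \ref{4removedordemonomios} removes a piece $h$ only when its truncated value exceeds $\nu_{Q''}(F)=\delta\nu(Q'')$ for \emph{every} $Q''\in\Psi_\alpha$, i.e.\ only when that value is at least $\delta B$; but by (iii)--(iv) the block $W_Q=\sum_{1\leq i\leq\gamma}f_iQ^i$ has $\nu_Q(W_Q)=(\delta-\gamma)B+\gamma\nu(Q)<\delta B$, and this stays below $\delta\nu(Q'')$ for $Q''$ with $\nu(Q'')$ sufficiently close to $B$, no matter how you let $\nu(Q)$ tend to $B$. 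So ``tracking values'' along $\Psi$ never makes $W_Q$ removable, and nothing in your sketch actually exploits that $\gamma(F)$ is not a power of $p$. The paper's mechanism is of a different nature: write $\gamma(F)=p^eu$ with $p\nmid u$, $u>1$, set $b=p^eb_\infty$, and show $\delta_Q(\partial_bF)\geq\gamma(F)-p^e>0$ for every $Q$ in a final subset. This uses Proposition \ref{2dp} (which exploits $p\nmid\binom{\gamma(F)}{p^e}$) applied to $F_1=\sum_{i\notin\Lambda(F)}f_iQ^i$, and then Lemma \ref{Lemafinal}, the divisibility $p\mid\binom{n}{r}$ for $n\in\Lambda(F)$ a power of $p$ and $0<r<n$, and the condition $\nu(p)>\delta(B-\nu(Q))$ built into $\Psi$, to show that the derivatives of the power-of-$p$ monomials cannot interfere in degree $\gamma(F)-p^e$; finally Lemma \ref{2sujestao} \textbf{(i)} and \textbf{(vii)} give $\nu_Q(\partial_bF)<\nu(\partial_bF)$ for all $Q$, so $\partial_bF\in S_\alpha$ with $\deg(\partial_bF)<\deg(F)$, contradicting minimality. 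Without this derivative-plus-binomial-coefficient argument (or an equivalent), (v) is unproven, and with it falls the conclusion in (vi) that $\delta$ is a power of $p$.
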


\begin{proof}
By Lemma \ref{7essencial} \textbf{(i)}, for every $Q_1,Q_2\in\Psi$ such that $\nu(Q_1)\leq\nu(Q_2)$ we have $\gamma_{Q_2}(F)\leq\gamma_{Q_1}(F)$. Hence, there exists $\gamma(F)$, $0\leq \gamma(F)\leq \delta$, and a final subset $\Psi_{\gamma(F)}$ of $\Psi$ (consequently also final in $\Psi_\alpha$) such that for every $Q\in\Psi_{\gamma(F)}$ we have $\gamma(F)=\gamma_Q(F)$. Consequently, also by Lemma \ref{7essencial} \textbf{(i)}, for every $Q_1,Q_2\in\Psi_{\gamma(F)}$ such that $\nu(Q_1)\leq\nu(Q_2)$ we have
\[
\Lambda_{Q_2}(F)\subseteq\Lambda_{Q_1}(F).
\]
Consequently, there exists $\Lambda(F)\subseteq\{\gamma(F)+1,\ldots,\delta\}$ and a final subset $\Psi_{\Lambda(F)}$ of $\Psi_{\gamma(F)}$ (and consequently also final in $\Psi_\alpha$) such that for every $Q\in\Psi_{\Lambda(F)}$ we have $\Lambda(F)=\Lambda_Q(F)$. This shows \textbf{(i)}.

The items \textbf{(ii)} and \textbf{(iii)} are direct consequence of Corollary \ref{7essencial} \textbf{(ii)} and \textbf{(iii)}.

For \textbf{(iv)}, let $Q_1\in\Psi_{\Lambda(F)}$, $F=Q_1^\delta+\ldots+b_0$ the $Q_1$-expansion of $F$, $\omega=\omega_{Q_1}(F)$ and $\gamma=\gamma(F)$. Since 
\[
\nu(b_\gamma)=(\delta-\gamma)B\mbox{ and }\nu(b_\omega Q_1^\omega)\leq\nu(b_\gamma Q_1^\gamma)
\]
we have
\[
\nu(b_\omega)\leq(\delta-\gamma)B+(\gamma-\omega)\nu(Q_1)\leq(\delta-\omega)B.
\]
The second inequality holds beacuse $\omega\leq\gamma$  and the fact that if $\nu(b_\omega)=(\delta-\omega)B$, then $\omega=\gamma$. Since $\nu(b_\omega Q_1^\omega)<\delta B$, by Corollary \ref{7essencial} \textbf{(ii)} for every $Q_2\in\Psi_{\Lambda(F)}$ with $Q_2\geq Q_1$, if $c_\omega$ is the coefficient of degree $\omega$ in the $Q_2$-expansion of $F$, then
\[
\nu(c_\omega)=\nu(b_\omega)\leq(\delta-\omega)B.
\]
Consequently, $\nu(c_\omega Q_2^\omega)<\delta B$. Hence, by Corollary \ref{7essencial} \textbf{(iii)} we have $\nu(b_\omega)=(\delta-\omega)B$. Therefore, $\gamma=\omega$.

We will show \textbf{(v)} by contradiction. Suppose that $\gamma(F)\neq 0$ and write $\gamma(F)=p^eu$ where $e\in\mathbb{N}_0$, $u\in\mathbb{N}$ and $p\nmid u$. Since $\gamma(F)$ is not a power of $p$ we have $\gamma(F)-p^e>0$. Set $b=p^eb_\infty$. It is enough to show that, for every $Q\in\Psi_{\Lambda(F)}$ we have $\delta_Q(\partial_bF)>0$. Indeed, by Lemma \ref{2sujestao} \textbf{(i)} and \textbf{(vii)}, since $\Psi_\alpha$ does not have a maximum, this would imply that $\partial_bF\in S_\alpha$. This is a contradiction to the minimality of the degree of $F$ in $S_\alpha$.

Let $\Gamma=\{1,\ldots,\delta\}\setminus\Lambda(F)$, $Q\in\Psi_{\Lambda(F)}$, $F=Q^\delta+\ldots+f_0$ the $Q$-expansion of $F$ and
\[
F_1=\sum_{i\in\Gamma}f_iQ^i.
\]
By \textbf{(iv)} we have $\delta_Q(F_1)=\gamma(F)$. Consequently, by \textbf{(iii)} and Proposition \ref{2dp} we have
\[
\nu_Q(\partial_bF_1)=(\delta-\gamma(F))B+\gamma(F)\nu(Q)-b\epsilon(Q)\mbox{ and }\delta_Q(\partial_bF_1)=\delta-p^e.
\]
Since $\nu(f_0)\geq\nu_Q(F)=\delta\nu(Q)$, we have
\[
\nu_Q(\partial_bf_0)>\delta\nu(Q)-b\epsilon(Q_0)>\delta B-b\epsilon(Q)>\nu_Q(\partial_bF_1).
\]
Since
\[
\partial_bF=\partial_b(f_0)+\partial_b(F_1)+\sum_{i\in\Lambda(F)}\partial_b(f_iQ^i),
\]
it is enough to show that, for every $n\in\Lambda(f)$, the terms of degree $0$ and $\gamma(F)-p^e$ in the $Q$-expansion of $\partial_b(f_nQ^n)$ have values larger than $\nu_Q(\partial_bF_1)$.

Take $n\in\Lambda(F)$. By (\ref{2derivada}), it is enough to show that for every $\lambda=(b_0,\ldots,b_r)\in\mathcal{S}_{b,n}$ for which $n-r\leq\gamma(F)-p^e$ we have
\[
\nu(C(\lambda)T_\lambda(f_nQ^n))>\nu_Q(\partial_bF_1).
\]
By \textbf{(ii)} we have $\nu(f_n)=(\delta-n)B$. If $n-r<\gamma(F)-p^e$, the by Lemma \ref{Lemafinal} we have
\[
\nu(T_\lambda(f_nQ^n))\geq(\delta+1-\gamma(F))B+(\gamma(F)-1)\nu(Q)-b\epsilon(Q)>\nu_Q(\partial_bF_1).
\]
If $n-r=\gamma(F)-p^e$, then since $0<r<n$ and $n$ is a power of $p$, we have $p\mid{n\choose r}$. Consequently, $p\mid C(\lambda)$. Therefore, 
\begin{displaymath}
\begin{array}{rcl}
\nu(C(\lambda)T_\lambda(f_nQ^n))&\geq&\nu(p)+(\delta-n)B+n\nu(Q)-b\epsilon(Q)\\[8pt]
&>&\delta(B-\nu(Q))+(\delta-n)B+n\nu(Q)-b\epsilon(Q)\geq\delta B-b\epsilon(Q)\\[8pt]
&\geq&\nu_Q(\partial_bF_1).
\end{array}
\end{displaymath}
The result follows from this.

By \textbf{(v)} and Lemma \ref{4removedordemonomios} we have that $P_Q(F)$ is a key polynomial for $\Psi_\alpha$. Hence, $\deg(P_Q(F))=\deg(F)$. Consequently, $\delta\in\Lambda(F)$ and hence $\delta$ is a power of $p$. This shows \textbf{(vi)}.
\end{proof}

\ \\
\noindent{\footnotesize JOSNEI NOVACOSKI\\
Departamento de Matem\'atica--UFSCar\\
Rodovia Washington Lu\'is, 235\\
13565-905 - S\~ao Carlos - SP\\
Email: {\tt josnei@dm.ufscar.br} \\\\

\noindent{\footnotesize MICHAEL DE MORAES\\
Departamento de Matem\'atica--ICMC-USP\\
Av. Trabalhador s\~ao-carlense, 400\\
13566-590 - S\~ao Carlos - SP\\
Email: {\tt michael.moraes@usp.br} \\\\
\end{document}